\newcommand{\bbR}{\ensuremath{\mathbb{R}}}
\newcommand{\bbC}{\ensuremath{\mathbb{C}}}
\newcommand{\bigO}[1]{\ensuremath{\mathcal{O}\left( #1 \right)}}
\newcommand{\dd}{\ensuremath{\mathrm{d}}}
\newcommand{\realPart}[1]{\ensuremath{\mathrm{Re}\left[ #1 \right]}}
\newcommand{\mfd}{\ensuremath{\mathcal{M}}}
\newcommand{\eqMfd}{\ensuremath{\mathcal{E}}}
\newcommand{\delp}{\ensuremath{\Delta \varphi}}
\newcommand{\pbar}{\ensuremath{\bar{\varphi}}}
\newcommand{\delm}{\ensuremath{\Delta m}}
\newcommand{\mbar}{\ensuremath{\bar{m}}}
\newcommand{\delv}{\ensuremath{\Delta v}}
\newcommand{\vbar}{\ensuremath{\bar{v}}}
\DeclareMathOperator{\tr}{tr}
\newcommand{\map}[3]{#1: #2 \rightarrow #3}
\newcommand{\domain}{\ensuremath{\mathcal{D}}}
\newcommand{\oneto}[1]{\ensuremath{ \{1, \ldots, #1 \} }}
\newcommand{\tran}{T}
\def \etal {\emph{et al.}}
\newcommand{\beq}{\begin{equation}}
\newcommand{\eeq}{\end{equation}}
\newcommand{\bal}{\begin{align}}
\newcommand{\eal}{\end{align}}
\newtheorem{remark}[theorem]{Remark}
\newtheorem{claim}[theorem]{Claim}
\title{\LARGE \bf
A dynamical system for prioritizing and coordinating motivations}
\author{Paul Reverdy and Daniel E. Koditschek\thanks{Department of Electrical and Systems Engineering, University of Pennsylvania, Philadelphia, PA (\email{preverdy@seas.upenn.edu}, \email{kod@seas.upenn.edu}). P. Reverdy's present address is Department of Aerospace and Mechanical Engineering, University of Arizona, Tucson, AZ (\email{preverdy@email.arizona.edu}).}}
\begin{document}
\maketitle
\begin{abstract}
We develop a dynamical systems approach to prioritizing and selecting multiple recurring tasks with the aim of conferring a degree of deliberative goal selection to a mobile robot confronted with competing objectives. We take navigation as our prototypical task, and use reactive (i.e., vector field) planners derived from navigation functions to encode control policies that achieve each individual task. We associate a scalar ``value'' with each task representing its current urgency and let that quantity evolve in time as the robot evaluates the importance of its assigned task relative to competing tasks. The robot's motion control input is generated as a convex combination of the individual task vector fields. Their weights, in turn, evolve dynamically according to a decision model adapted from the literature on bioinspired swarm decision making, driven by the values. In this paper we study a simple case with two recurring, competing navigation tasks and derive conditions under which it can be guaranteed that the robot will repeatedly serve each in turn. Specifically, we provide conditions sufficient for the emergence of a stable limit cycle along which the robot repeatedly and alternately navigates to the two goal locations. Numerical study suggests that the basin of attraction is quite large so that significant perturbations are recovered with a reliable return to the desired task coordination pattern.
\end{abstract}

\begin{keywords}
  limit cycles, geometric singular perturbation theory, relaxation oscillations, Hopf bifurcation
\end{keywords}

\begin{AMS}
  37G25, 37D10, 37N35
\end{AMS}

\section{Introduction}\label{sec:introduction}
This paper develops a dynamical systems framework for planning and executing recurrent coverage tasks such as might be assigned a robot night watchman, sentry, or other patrol agent. Such problems have historically occasioned the introduction of temporal logic \cite{JL-etal:13} or probabilistic \cite{RL-etal:17} specification with subsequent hybrid implementation \cite{HKG-GEF-GJP:09}. We seek to explore a more ``embodied'' approach, i.e., one that is capable of direct integration with a robot's real-time sensorimotor models and controllers yet still offering a degree of deliberative judgment. A prototypical example of a real-time deliberative system is a foraging animal that achieves its basic needs for food and shelter by periodically revisiting different locations in its environment at different times apparently governed by some internal sense of relative urgency or satiety. In the vocabulary of psychology, the animal can be said to have drives which motivate it to perform actions that reduce those drives \cite{RSW:18, CLH:43, HLP-JMG:12}. Inspired by the flexibility and robustness of such natural ``reactively deliberate'' systems, we seek a simple model of their drive-based decision-making mechanisms that might be robustly embodied within the dynamical sensorimotor layers of autonomous physical systems --- a motivational dynamics for robots.

Dynamical systems approaches have been successful in understanding mechanisms for decision making in biological systems such as human choice behavior in two-alternative forced choice tasks \cite{RB-etal:06}, migration behavior in animal groups \cite{NEL-etal:12}, and nest site selection behavior \cite{TDS-etal:12} in honeybee swarms. This literature has further motivated the exploration of such bio-inspired models for application to engineered systems \cite{RG-etal:18}. Often, these decision mechanisms are \emph{value based} in the sense that the organism can be interpreted as associating a numerical value with each available alternative and selecting the alternative with the highest value. Decision making in biological systems tends to be \emph{embodied} in the sense that animals implement their decisions by moving their bodies in some way \cite{NFL-GP:15}. In the standard two-alternative forced choice task, an animal registers a decision by pushing a button or by looking at a particular point on a screen. In the context of migration or nest site selection, the animal moves its entire body to a new location. We take navigation, interpreted broadly as the task of steering a system to a desired goal state while avoiding obstacles, as the prototypical task for a mobile robot.

Vector field methods provide a natural way to encode the sensorimotor activity required to perform navigation tasks in dynamical systems language. When the vector field arises as the gradient of a well-chosen function, such as a navigation function \cite{ER-DEK:92}, the system dynamics readily admit performance guarantees, such as proofs of convergence to the desired state while avoiding obstructions along the way. Furthermore, such vector field methods naturally map to control inputs for mechanical systems described by Lagrangian dynamics \cite{DEK:89} and can be composed via linear combination or more intricate sequential \cite{RRB-AAR-DEK:99} and parallel \cite{AD-DEK:15} operations. 

These features motivate the consideration of vector field methods as an interesting alternative to the logical approaches to deliberation mentioned above, e.g., \cite{HKG-GEF-GJP:09}, whose hybrid (event-based) transitions require separately derived logical representation of the underlying dynamics. Instead, we seek an intrinsically dynamical systems approach to the composition and prioritization of potentially competing tasks that interprets the coefficients of their representative fields' linear combinations as a kind of motivational state to be continuously adjusted in real time in a way that is flexible and robust to perturbations. In this initial work we seek to merely encode an activity composed of cyclicly-repeating base tasks, targeting, e.g., the recurrent patrol missions mentioned at the outset. The natural dynamical systems object by which to encode such an activity is a limit cycle. In subsequent work \cite{PR-etal:18} we are studying empirical implementation of these ideas on physical robots and seek to enhance our limit cycle framework to allow the system to respond to external stimuli while keeping the limit cycle as the base behavior.

The main result of this paper is captured in Figure \ref{fig:bifurcationValue} which summarizes a numerical study illustrating two central analytical insights stated as Theorem \ref{thm:hopfResult} and Theorem \ref{thm:mainResult}. The motivational feedback path has a gain parametrized by $\epsilon_v > 0$ and a time scale parametrized by $\epsilon_{\lambda} > 0$. Numerical studies summarized by the four subsequent plots referenced by the numbered points of the figure indicate the presence of a stable limit cycle for a wide range of these parameter values. Analysis reveals that $\epsilon_v$ plays the role of an $\epsilon_\lambda$-dependent bifurcation parameter. Specifically, in the fast timescale limit $\epsilon_\lambda \to 0$, Theorem \ref{thm:hopfResult} establishes the existence of a Hopf bifurcation at a critical value of the feedback gain parameter $\epsilon^*_v(0)$. Further numerical study confirms the value of that formally-determined parameter, and suggests that the Hopf bifurcation persists along a curve of critical values, $\epsilon^*_v(\epsilon_\lambda)$ for positive $\epsilon_\lambda$.

\begin{figure}[htbp]
\begin{center}
\includegraphics[width=3.5in]{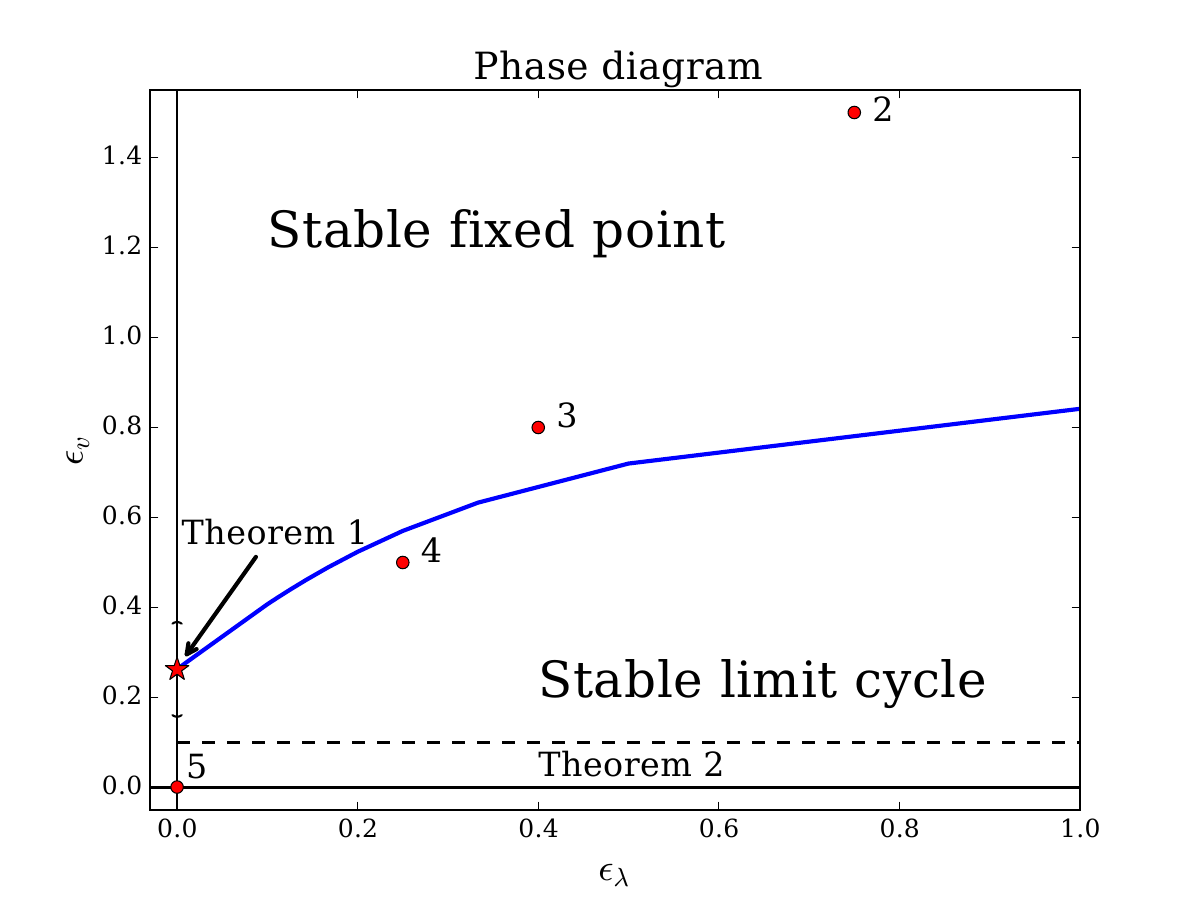}
\caption{In blue, we see the bifurcation value $\epsilon_v^*(\epsilon_{\lambda})$ numerically computed for a variety of values of $\epsilon_{\lambda}$. Simulations run with parameter values above the line exhibit a stable deadlock equilibrium, while values below the line exhibit oscillatory behavior. The red circles represent the values taken in the simulations displayed in succeeding figures with the corresponding number. Corroborating these numerical observations we establish the following formal results. In the single limit $\epsilon_\lambda \rightarrow 0$, Theorem \ref{thm:hopfResult} establishes a Hopf bifurcation at $\epsilon_{v,0} \approx 0.09175$, guaranteeing a family of stable limit cycles in a small (one-dimensional) neighborhood of $\epsilon_v$ values (at the $\epsilon_\lambda=0$ limit) around the red star. In the joint limit $\epsilon_\lambda,  \epsilon_v \rightarrow 0$, Theorem \ref{thm:mainResult} uses a singular perturbation argument to establish the persistence of stable limit cycles in some neighborhood of the abscissa of this plot.
}
\label{fig:bifurcationValue}
\end{center}
\end{figure}

Seeking formal confirmation of the limit cycles suggested by those simulations at the physically interesting parameter values where $\epsilon_\lambda > 0$, we next take recourse to a singular perturbation analysis. Specifically, we consider the joint limit $\epsilon_v \to 0, \epsilon_{\lambda} \to 0$ and carry out a dimension reduction of the system in this limit yielding planar dynamics exhibiting a limit cycle established by application of the Poincar\'e-Bendixson theorem. Arguments from geometric singular perturbation theory together with its conjectured (numerically corroborated) hyperbolicity then imply that this limit cycle persists for finite $\epsilon_v, \epsilon_{\lambda} > 0$.

This work is related to prior literature on dynamical decision-making in biological systems. Seeley \etal~\cite{TDS-etal:12} studied nest site selection behavior in honeybee swarms and discovered a mechanism called a stop signal, by which bees who were committed to one nest site physically wrestled bees committed to other sites in order to get them to abandon their commitment. Seeley \etal~constructed a dynamical systems model of this behavior and showed that the introduction of a stop signal allowed the system to avoid the deadlock state where no clear majority emerges in favor of any given option. We use the dynamical system from Seeley \etal~\cite{TDS-etal:12} which models value-based nest site selection in honeybee swarms to modulate the motivation state. We let the value associated with each task be modulated by how far the agent is from the goal state associated with that task. This introduces feedback into the motivation dynamics by making the current system state influence the task values and thereby the motivation state.

Pais \etal~\cite{DP-etal:13} studied Seeley \etal's model \cite{TDS-etal:12} using singular perturbation theory and showed that the stop signal also makes the model sensitive to the absolute value of the alternatives, allowing the system to remain in deadlock if all alternatives are equally poor. Pais \etal~suggested that this sensitivity is useful to avoid prematurely committing to a suboptimal alternative, and show that it results in hysteresis as a function of the difference in the value of the alternatives. More recently, Gray \etal~\cite{RG-etal:18} proposed a model of decision-making dynamics for networked agents inspired by the above-referenced works on nest site selection and showed that singularity theory \cite{MG-DGS:85} can provide a powerful tool to engineer desired outcomes in decision-making models. These convincing accounts of the utility and potential analytical tractability of such bioinspired decision models provide a direct point of departure for our work. Specifically, \cite{DP-etal:13, TDS-etal:12} studied one-off decisions where the value of each option (i.e., task) is static. In contrast, we allow the values of the tasks to change dynamically as they are completed by feeding back the system state, which allows the agent to determine the status of each task.

Other authors, particularly in the evolutionary dynamics literature, have studied systems with similar types of feedback. In evolutionary dynamics \cite{JH-KS:98}, which seeks to formalize Darwin's ideas about natural selection, a set of populations each representing different strategies interact with each other and the interaction determines the level of fitness of each strategy. Fit populations thrive and grow, while unfit populations die off. Pais, Caicedo, and Leonard \cite{DP-CHC-NEL:12} studied the replicator-mutator equations from evolutionary dynamics with a particular network structure to the fitness function and showed conditions under which the dynamics exhibit Hopf bifurcations resulting in limit cycles. Mitchener and Nowak \cite{WGM-MAN:04} studied evolutionary dynamics as a model of language transmission and showed conditions under which the dynamics of distinct grammars can exhibit limit cycles corresponding to periodic changes in the dominant grammar. The feedback model adopted by \cite{DP-CHC-NEL:12} and \cite{WGM-MAN:04} captures the evolutionary process in which the fitness of a given strategy is determined by the relative fractions of the population adopting that strategy. Such a model is inappropriate for our robotic application, where the value of a task need not arise from competitive interactions between tasks. Our Hopf analysis in Section \ref{sec:Hopf} is similar to that in \cite{DP-CHC-NEL:12}, but we go on to show the existence of limit cycles in a two-dimensional region in parameter space using tools from geometric singular perturbation theory.

The contributions of this paper are twofold. First, we lay out an intrinsically dynamical systems approach to the composition and prioritization of potentially competing tasks for mobile robots. This approach provides an alternative to existing approaches using temporal logic and suggests future work investigating the connections between logic-based and dynamical-systems-based approaches to modeling decision making. Second, we show how a bistable system with a pitchfork bifurcation (the motivation dynamics) can be incorporated into a feedback system and produce a system that exhibits a Hopf bifurcation. This result is intuitive, and, in particular, the results of \cite{MG-DGS:85} suggest interpreting the oscillation our system exhibits as arising from the feedback modulation of the unfolding parameters of the pitchfork embedded in the motivation dynamics. Elucidating this structure, e.g., by finding the normal form of the Hopf bifurcation embedded in \eqref{eq:dynamicsBaseCoordinates} is a nontrivial undertaking worthy of future study that may usefully inform further such designs.

The remainder of the paper is structured as follows. In Section \ref{sec:model} we lay out the broad class of systems under consideration before specifying the instance of the model which we study and stating our formal results. In Section \ref{sec:simulations} we show the result of several illustrative simulations, which suggest the existence of a Hopf bifurcation. In Section \ref{sec:Hopf} we study the system in the limit $\epsilon_{\lambda} \to 0$ and show the existence of a Hopf bifurcation that results in stable limit cycles. In Section \ref{sec:reduction} we study the system in the joint limit $\epsilon_{\lambda} \to 0, \epsilon_v \to 0$ and show the existence of a stable limit cycle in the resulting two-dimensional reduced system; in Section \ref{sec:persistence} we show that this limit cycle persists for finite values of $\epsilon_v$ and $\epsilon_{\lambda}$. Finally, we conclude in Section \ref{sec:conclusion}.

\section{Model, Problem Statement, and Formal Results}\label{sec:model}
In this section we define our system model, state the problem we address and the formal results we obtain.
\subsection{Model}
Our model consists of three interconnected dynamical subsystems: states representing the navigation tasks and associated control actions (vector fields); the motivation state $m$; and the value state $v$. Implicit in the definition of the navigation tasks is the definition of the physical agent, which comprises the agent's body and its workspace, or environment.

\subsubsection{Body, Environment, and Motivational States}
We model the robot as a point particle located at $x \in \domain$, where the environment $\domain \subseteq \bbR^d$ is a domain within Euclidean space. In general, $\domain$ may be punctured by obstacles, but in this initial work we restrict ourselves to unobstructed domains.

We represent \emph{motivation} by the state $m \in \Delta^{N}$, where $\Delta^N = \{ m \in \bbR^{N+1} : m_i \geq 0, \sum_{i=1}^{N+1} m_i = 1\}$ is the $N$-simplex. We index the first $N$ elements of $m$ by $i \in \oneto{N}$: $m_i$ represents the motivation to perform task $i$. The last element we label as $m_U$: this represents undecided motivation, i.e., the decision to not perform any task.

\subsubsection{Tasks}
The agent has a set of $N$ \emph{tasks}. Each task $i \in \oneto{N}$ requires navigating the agent to the location $x_i^* \in \domain$. Inspired by the navigation function framework \cite{ER-DEK:92}, we assume the existence of a distance function $\varphi_i: \domain \to \bbR_+$ for each task $i$. The function yields a gradient field $-\nabla \varphi_i$ such that $\dot{x} = -\nabla \varphi_i$ obeys
\[ \lim_{t \to +\infty} x(t) = x_i^*. \]
That is, the gradient field $F_i = -\nabla \varphi_i$ is a vector field that accomplishes task $i$. In the following, where the domain is assumed to be unobstructed, we define the navigation functions by the squared Euclidean distance 
\beq \label{eq:navFunction}
\varphi_i(x) = \frac{1}{2} \| x - x_i^* \|_2^2.
\eeq
 
Finally, we define the matrix-valued function consisting of the $N$ task navigation vector fields plus the null gradient field associated with indecision
\beq \label{eq:navFields}
\Phi(x) = \begin{bmatrix} F_1(x) & \hdots & F_N(x) & 0 \end{bmatrix} \in \bbR^{d \times (N+1)}.
\eeq
By taking convex combinations of these vector fields we can assign the agent weighted combinations of the instantaneous (``greedy'') task plans they represent; the motivation state, defined below, will specify the convex combination to be taken at any given time.

The agent's high-level mission is to repeatedly carry out each of the $N$ low-level tasks, i.e., visit each of the $N$ locations, in a specified order. In the vocabulary of the LTL hybrid systems literature, this corresponds to a recurrent patrol or coverage mission \cite{GEF-etal:09}. We now develop the detailed model, introducing its states and dynamics, then finally present statements of the problem we address and the formal results.

\subsection{Model Dynamics} \label{sec:dynamics}
Having specified the system model and its state space, we now define its dynamics. The system has state $(x, m, v) \in \domain \times \Delta^N \times \bbR_+^N$. The state variables evolve according to the dynamics
\begin{align}
\dot{x} &= f_x(x, m)\\
\dot{m} &= f_m(m, v)\\
\dot{v} &= f_v(v, x).
\end{align}
The specific forms of the functions $f_x, f_m,$ and $f_v$ are given in the following paragraphs.

\subsubsection{Navigation dynamics}
The body's location dynamics are the convex combination of the $N$ navigation vector fields (plus the null field associated with indecision), weighted by the motivation state:
\beq \label{eq:xDynamics}
\dot{x} = f_x(x,m) = -\Phi(x) m.
\eeq
For example, when $m = [1, 0, \cdots, 0]^\tran$, the navigation dynamics are $\dot{x} = -\Phi(x)m = -\nabla \varphi_1(x)$, and when $m = [0, \cdots, 0, 1]^\tran$, the dynamics are $\dot{x} = 0.$

\subsubsection{Motivation dynamics}
We take the motivation state dynamics from Pais \emph{et al.}'s work \cite{DP-etal:13} studying group decision making behavior in honeybee swarms:
\begin{align} \label{eq:miDynamics} 
\dot{m}_i = \tilde{v}_i m_U - m_i \left( 1/\tilde{v}_i - \tilde{v}_i m_U + \sigma_i (1 - m_i - m_U) \right).
\end{align}
We set $\tilde{v}_i = v^*_i v_i$, where $v_i \in \bbR_+$ is the value of task $i$ and $v_i^* > 0$ is a gain parameter that scales $v_i$. Equation \eqref{eq:miDynamics} holds for each $i \in \oneto{N}$, with the dynamics for $m_U$ following from the constraint that defines the simplex.

The dynamics \eqref{eq:miDynamics} were derived for group decision making in \cite{TDS-etal:12} from a microscopic individual-level Markov process model that incorporates commitment, abandonment, recruitment, and stop signal mechanisms. The term $\tilde{v}_i m_U$ represents spontaneous commitment of an uncommitted individual to option $i$ at a rate which is proportional to the value $\tilde{v}_i$, $-m_i/\tilde{v}_i$ represents spontaneous abandonment, $\tilde{v}_i m_i m_U$ represents recruitment of an uncommitted individual by one committed to option $i$, and $-\sigma_i m_i (1-m_i-m_U)$ represents a signal from individuals committed to options other than $i$ telling individuals committed to option $i$ to abandon their commitment. In our context where $m$ represents a single decision maker's motivation state, each of these mechanisms can be interpreted as modeling specific processes between neurons in the decision maker's brain rather than between individuals in a group.

\subsubsection{Value dynamics}
We define the dynamics of $v_i \in \bbR_+$, the value associated with state $i$, by
\begin{align}
\dot{v}_i 
	&= \lambda_i (\varphi_i(x) - v_i), i \in \oneto{N}. \label{eq:viDynamics}
\end{align}
Recall that the distance functions $\varphi_i$ take the value zero at the goal and increase with distance from the goal $x_i^*$. The intuition behind these dynamics is that the value (i.e., urgency) of a task should increase when the agent is far from that task's goal state and decrease when the agent reaches that state; a larger $v_i$ means that task $i$ is more valuable, i.e., more urgent. We encode this intuition in the linear time-invariant dynamics \eqref{eq:viDynamics}, which cause $v_i$ to follow $\varphi_i(x)$ with a lag associated with the integration time scale $\lambda_i$. The dynamics correspond closely to the concept of drive reduction theory in social psychology, where motivation is thought to arise from the desire to carry out actions that satisfy various intrinsic drives \cite{RSW:18, CLH:43}.

\subsection{Formal Problem Statement and Analytical Results}
The foregoing presentation introduces a broad class of models whose application to specific problems of reactive task planning and motivational control of multiple competing tasks we intend to explore empirically on physical robots. For the analytical purposes of this paper we find it expedient to consider a severely restricted instance from that class entailing only two, greatly simplified tasks and affording, in turn, a low-dimensional parametrization through imposition of various symmetries. In this section we first introduce the details of that restricted problem class and then state the analytical results we obtain.

\subsubsection{Two Tasks, Their Essential Parameters, and New Coordinates}
We have four parameters for each task $i \in \oneto{N}$. Each task requires navigating to a goal location $x_i^* \in \domain$. In the motivation dynamics \eqref{eq:miDynamics}, there is a positive stop signal parameter $\sigma_i > 0$ and value gain $v_i^* > 0$. Finally, in the value dynamics \eqref{eq:viDynamics}, there is a time scale $\lambda_i > 0$. We show that the number of parameters can be greatly reduced and that the system's behavior can be largely understood by varying the value of $v_i^*$.

For many parameter values, the system \eqref{eq:xDynamics}--\eqref{eq:viDynamics} exhibits a stable limit cycle in numerical simulations. To systematically study the system, we specialize to the case of a planar workspace $\domain = \bbR^2$ and $N=2$ tasks. Then the state space of the system \eqref{eq:xDynamics}--\eqref{eq:viDynamics} is $\bbR^2 \times \Delta^2 \times \bbR_+^2$, for which we pick the coordinates $\xi = (x_1, x_2, m_1, m_2, v_1, v_2)$. Furthermore, we set the following parameter values.

We set the two goal locations to $x_1^* = (1, 0) \in \domain, x_2^* = (-1, 0) \in \domain$. This choice is made without loss of generality, as it amounts to a translation, rotation, and scaling of the coordinates $x$ for $\domain$. Scaling the coordinates $x$ by a factor $\alpha$ requires scaling the $v$ coordinates and parameters $v_i^*$ by a factor $\alpha^2$ to account for the fact that $\varphi_i(x)$ is homogenous of degree 2. For the stop signal $\sigma_i$, we follow Pais \emph{et al.} \cite{DP-etal:13} and impose the symmetry $\sigma_1 = \sigma_2 = \sigma$. Similarly, for ease of exposition and analysis we equate the value gain parameters $v_1^* = v_2^* = v^* > 0$ as well as the value time scale parameters $\lambda_1 = \lambda_2 = \lambda > 0$. With these choices the set of system parameters is reduced to $\sigma, v^*,$ and $\lambda$, each of which must be positive. Fixing $\sigma$ at a nominal value, e.g., 8, leaves $v^*$ and $\lambda$ as free parameters whose values determine the behavior of the system.

The symmetry $\sigma_1 = \sigma_2$ makes the decision-making mechanism, i.e., the motivation dynamics \eqref{eq:miDynamics}, obey an $S_2$ symmetry (i.e., symmetry under the interchange of the task numbers $1 \leftrightarrow 2$) when the two task values are equal, i.e., $\tilde{v}_1 = \tilde{v}_2$. The concrete interpretation of this symmetry is that the decision-making mechanism has no inherent bias for either underlying task. We find this characteristic to be desirable because it means that biases, if desired, can be introduced through the task values $v$. Isolating the biases in this way simplifies the analysis and eventual programming of the resulting system.\footnote{In the case where $\sigma_1 = \sigma_2$ and $v^*_1 = v^*_2$, the overall feedback system is $S_2$ invariant, though this may not be desirable in an application where the underlying navigation tasks are not of equal importance. For such an application, our formulation makes it natural to encode the difference in importance by breaking the symmetry $v_1^* = v_2^*$.} The $S_N$ symmetry requirement naturally suggests further work in dynamical systems theory to study the dynamics of systems on $\Delta^N$ with the $S_N$ symmetry. Previous authors, e.g, \cite{MG-IS-DGS:88}, have studied dynamical systems using the symmetry approach, and some recent work, e.g., \cite{RG-etal:18}, has begun to use this theory in engineering applications; the present work suggests another application area for this theory.

Breaking the $S_2$ symmetry by allowing $\sigma_1$ and $\sigma_2$ to be set independently would introduce a bias towards one or the other options in the decision mechanism. Similarly, allowing $\lambda_1$ and $\lambda_2$ to be set independently also introduces a bias, this time in the valuation. Numerical experimentation suggests that the limit cycle behavior analyzed with the assumed symmetries persists for asymmetric settings of the parameters. Pais \etal~\cite{DP-etal:13} extensively studied the motivation dynamics in isolation and showed that, for fixed $v$, the dynamics \eqref{eq:miDynamics} exhibit a pitchfork bifurcation as $\sigma=\sigma_1=\sigma_2$ is increased through a threshold value. As $\sigma$ is increased, the oscillations become increasingly nonlinear (as can be expected since $\sigma$ scales a nonlinearity in the dynamics \eqref{eq:miDynamics}). In the limit of large $\sigma$ the dynamics tend towards a relaxation oscillation where the change in the motivation state $m$ happens quickly relative to the change in location state $x$. All evidence suggests that the parameters must be such that the motivation dynamics are in the post-bifurcation regime for the limit cycle to exist, however we will make careful analysis of the connection between the pitchfork bifurcation of the motivation dynamics and the Hopf bifurcation of the closed-loop system the subject of future work.

In the case $N=2$ and assuming $\varphi_i$ defined by \eqref{eq:navFunction}, the equations \eqref{eq:xDynamics}--\eqref{eq:viDynamics} are
\beq \label{eq:dynamicsBaseCoordinates}
\dot \xi = f_{\xi}(\xi),
\eeq
where the components of $f_{\xi}$ are given by
\begin{align*}
\dot{x} &= -m_1 (x-x_1^*) - m_2 (x-x_2^*)\\
\dot{m}_1 &= (v^* v_1) m_U - m_1 (1/(v^* v_1) - (v^* v_1) m_U + \sigma m_2)\\
\dot{m}_2 &= (v^* v_2) m_U - m_2 (1/(v^* v_2) - (v^* v_2) m_U + \sigma m_1)\\
\dot{v}_1 &= \lambda (\varphi_1(x) - v_1)\\
\dot{v}_2 &= \lambda (\varphi_2(x) - v_2).
\end{align*}

We change coordinates on $\Delta^2 \times \bbR^2_+$ by transforming into mean and difference coordinates defined by 
\[ \delm = m_1 - m_2, \mbar = \frac{m_1 + m_2}{2}; \ \delv = v_1 - v_2, \vbar = \frac{v_1 + v_2}{2}. \]
Define the coordinates 
\beq \label{eq:defZ}
z = (x_1, x_2, \delm, \mbar, \delv, \vbar)
\eeq 
on the space $\bbR^2 \times \Delta^2 \times \bbR \times \bbR_+$ and parameters $\epsilon_v = 1/v^*$ and $\epsilon_{\lambda} = 1/\lambda$. It is easy to see that the transformation from $\zeta$ to $z$ is a diffeomorphism. In the mean-difference coordinates, the dynamics \eqref{eq:dynamicsBaseCoordinates} are
\beq \label{eq:mean-differenceDynamics}
\dot z = f_z(z),
\eeq
where the components of $f_z$ are given by
\begin{align} \label{eq:x1Dot}
\dot{x}_1 = f_{x_1}(z) = &\delm -2 \mbar x_1,
\end{align}

\begin{align} \label{eq:x2Dot}
\dot{x}_2 = f_{x_2}(z) =&-2\mbar x_2,
\end{align}

\begin{align} \label{eq:DeltamDot}
\dot{\Delta m} = f_{\delm}(z) = - &\epsilon_v \left( \frac{2 \bar{m} + \Delta m}{2 \vbar + \delv} -\frac{2 \bar{m} - \Delta m}{2 \vbar - \delv} \right)\\ 
&+ \vbar \Delta m (1-2 \bar{m})/\epsilon_v + \delv (1-2 \bar{m}) (1+ \bar{m})/\epsilon_v, \nonumber
\end{align}

\begin{align} \label{eq:mBarDot}
\dot{\bar{m}} = f_{\mbar}(z) = \frac{1}{2} &\biggl( - \epsilon_v \frac{2 \bar{m} + \Delta m}{2 \vbar + \delv} - \epsilon_v \frac{2 \bar{m} - \Delta m}{2 \vbar - \delv} \\ \nonumber
 &+ \frac{2 \vbar + \delv}{2 \epsilon_v}(1-2\bar{m}) (1 + \frac{2 \bar{m} + \Delta m}{2})\\ \nonumber
 &+ \frac{2 \vbar - \delv}{2 \epsilon_v}(1-2\bar{m}) (1 + \frac{2 \bar{m} - \Delta m}{2})\\ \nonumber
 &  - \frac{\sigma}{2} (2 \bar{m} + \Delta m)(2 \bar{m} - \Delta m) \biggr) ,
\end{align}

\beq \label{eq:DeltavDot}
\epsilon_{\lambda} \dot{\delv} = f_{\delv}(z) = -(\delv - \Delta \varphi(x)),
\eeq

\beq \label{eq:vBarDot}
\epsilon_{\lambda} \dot{\vbar} = f_{\vbar}(z) = -(\vbar - \bar{\varphi}(x)),
\eeq
where $\delp(x) = \varphi_1(x) - \varphi_2(x), \pbar(x) = (\varphi_1(x) + \varphi_2(x))/2$.

Note that the system \eqref{eq:dynamicsBaseCoordinates} exhibits a symmetric equilibrium state which we call a (symmetric) deadlock equilibrium, adopting the nomenclature established in the literature, e.g., \cite{DP-etal:13}. The word \emph{deadlock} refers to the fact that in such an equilibrium, the system fails to reach a decision. Precisely, we have the following:
\begin{definition}
Let $\xi_d$ be an equilibrium of the dynamics \eqref{eq:dynamicsBaseCoordinates} of the form $\xi_d = (x_{1,d}, x_{2,d}, m_d, m_d, v_d, v_d)$. Then $\xi_d$ is called a \emph{(symmetric) deadlock equilibrium}. Equivalently, in $z$ coordinates, an equilibrium $z_d$ of the dynamics \eqref{eq:mean-differenceDynamics} is called a (symmetric) deadlock equilibrium if it is of the form $z_d = (x_{1,d}, x_{2,d}, 0, \mbar_d, 0, \vbar_d)$.
\end{definition}
It is straightforward to see that a symmetric deadlock equilibrium exists if there is a point $x_d \in \domain$ such that $\varphi_1(x_d) = \varphi_2(x_d) = \pbar$.

\subsubsection{Formal Results} \label{sec:formalResults}
In this section we state the two theorems which constitute the formal results of the paper. As can be seen in the simulations presented in Section \ref{sec:simulations}, the dynamics \eqref{eq:mean-differenceDynamics} appear to exhibit a Hopf bifurcation as the parameters $\epsilon_v$ and $\epsilon_{\lambda}$ approach zero, giving birth to stable limit cycles. We formalize this observation in two steps. First we consider the limit $\epsilon_\lambda \to 0$ which reduces the dimension of the system \eqref{eq:mean-differenceDynamics} and permits an explicit computation showing the existence of a Hopf bifurcation.

In the limit $\epsilon_{\lambda} \to 0$, the $v$ variables are directly coupled to $\varphi(x)$, so $\delv = \delp(x)$ and $\vbar = \pbar(x)$, which are fixed points of Equations \eqref{eq:DeltavDot} and \eqref{eq:vBarDot}, respectively. Define $z_r = (x_1, x_2, \delm, \mbar)$ as the vector of the remaining state variables. Explicitly, $z$ and $z_r$ are related by the linear embedding $z = h(z_r)$ with left inverse given by the linear projection $h^{\dagger}$, where
\[ h(z_{r,1}, z_{r,2},z_{r,3},z_{r,4}) :=
(z_{r,1}, z_{r,2},z_{r,3},z_{r,4}, z_{r,1}, z_{r,2}); \quad
h^\dagger(z) := (z_1, z_2, z_3, z_4).
\]
Then the dynamics \eqref{eq:mean-differenceDynamics} reduce to the restriction dynamics
\beq 
\dot{z}_r = f_r(z_r, \epsilon_v) := D h^{\dagger} \cdot f_z \circ h(z_r), \label{eq:dynamicsDirectFeedback}
\eeq
The restriction dynamics exhibit a Hopf bifurcation, as summarized in the following theorem:

\begin{theorem} \label{thm:hopfResult}
The system $\dot z_r = f_r(z_r, \epsilon_v)$ defined by \eqref{eq:dynamicsDirectFeedback} has a deadlock equilibrium $z_{rd}$ given by \eqref{eq:reducedDeadlockEq}. For $\sigma > 6$, the dynamics undergo a Hopf bifurcation resulting in stable periodic solutions at $(z_{rd}, \epsilon_{v,0})$, where $\epsilon_{v,0}$ is the unique solution $\epsilon_v \in [0, 1/2]$ of $4(2-\sigma) \epsilon_v^2 - 4(2-\sigma) \epsilon_v + (6-\sigma) = 0$. For $\sigma = 8, \epsilon_{v,0} = (3-\sqrt{6})/6 \approx 0.09175$.
\end{theorem}

As we are ultimately motivated by the physically meaningful case of small but non-zero values of $\epsilon_v$ and $\epsilon_\lambda$, we study the singular perturbation limit $\epsilon_v, \epsilon_\lambda \to 0$ under which the system \eqref{eq:mean-differenceDynamics} can be reduced to a planar dynamical system and show the existence of a limit cycle. We then employ tools from geometric singular perturbation theory to show the persistence of this limit cycle for sufficiently small, but finite, values of $\epsilon_v$ and $\epsilon_\lambda$:
\begin{theorem} \label{thm:mainResult}
Accepting Conjecture \ref{conj:Floquet}, below, for $\sigma>6$, there exists a stable limit cycle of \eqref{eq:mean-differenceDynamics} for sufficiently small, but finite, values of $\epsilon_{\lambda}$ and $\epsilon_v$. Equivalently, fixing $\lambda$, there exists a stable limit cycle of \eqref{eq:mean-differenceDynamics} for sufficiently large, but finite, values of $v^*$.
\end{theorem}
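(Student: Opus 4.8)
The plan is to prove Theorem \ref{thm:mainResult} via a two-stage singular perturbation argument. The system \eqref{eq:mean-differenceDynamics} has three time scales: the slow value dynamics (controlled by $\epsilon_\lambda$), the intermediate dynamics that couple $x$ and $m$, and within the $m$ dynamics a fast piece scaled by $1/\epsilon_v$. In the joint limit $\epsilon_v, \epsilon_\lambda \to 0$, I would first exploit $\epsilon_v \to 0$: rescaling $\tau = t/\epsilon_v$ exhibits $\delm, \mbar$ as fast variables that relax to a critical manifold $\mcM_0 = \{(\delm, \mbar) = g(\delp, \pbar)\}$ obtained by setting the $O(1/\epsilon_v)$ terms in \eqref{eq:DeltamDot}--\eqref{eq:mBarDot} to zero (this is essentially the quasi-steady-state relation between motivation and value/navigation state). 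Then $\epsilon_\lambda \to 0$ forces $\delv = \delp$, $\vbar = \pbar$ as in the reduction preceding Theorem \ref{thm:hopfResult}. What remains is a planar system in $(\delp, \pbar)$ obtained by substituting the critical-manifold motivation into \eqref{eq:deltaPhiDot}--\eqref{eq:barPhiDot}.

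Second, I would establish a stable limit cycle in this planar reduced system using the Poincar\'e--Bendixson theorem. This requires: (i) identifying the deadlock equilibrium (the symmetric fixed point $\delp = 0$, $\delm = 0$) and showing it is the unique equilibrium in the relevant region, and showing it is a repeller in the $\epsilon_v \to 0$ limit — here I would use the Hopf analysis of Theorem \ref{thm:hopfResult} as a guide, noting that as $\epsilon_v$ decreases through $\epsilon_{v,0}$ the equilibrium loses stability, so for $\epsilon_v$ small enough the linearization has eigenvalues with positive real part; (ii) constructing a forward-invariant annular trapping region. For the outer boundary I would exploit the structure of \eqref{eq:barPhiDot}: far from both goals, $\dot{\pbar}<0$ (the agent is pulled inward), and the geometry of $H_0$ in elliptic coordinates $(\delp, \pbar)$ keeps trajectories bounded; for the inner boundary I would use a small circle around the deadlock point on which the vector field points outward, guaranteed by the repelling linearization. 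Poincar\'e--Bendixson then yields a periodic orbit in the annulus, and a standard argument (no equilibria inside, so the $\omega$-limit set is a cycle; outermost such cycle is stable from outside, innermost from inside) gives a \emph{stable} limit cycle for the reduced planar flow.

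Third comes the persistence step, which leans on geometric singular perturbation theory (Fenichel's theorems). Having a hyperbolic (normally attracting) critical manifold $\mcM_0$ for the $\epsilon_v$-layer and likewise for the $\epsilon_\lambda$-layer, Fenichel theory gives a locally invariant slow manifold $\mcM_{\epsilon}$, $O(\epsilon_v + \epsilon_\lambda)$-close to $\mcM_0$, on which the reduced flow is a regular perturbation of the planar flow above. A regular perturbation of a \emph{hyperbolic} (exponentially stable) limit cycle persists: this is where Conjecture \ref{conj:Floquet} enters — it supplies precisely the claim that the planar limit cycle is hyperbolic (the nontrivial Floquet multiplier is strictly inside the unit circle), which I would verify numerically rather than by hand. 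Granting that, the persistent cycle on $\mcM_\epsilon$ is a periodic orbit of the full six-dimensional system \eqref{eq:mean-differenceDynamics}, and its stability transverse to $\mcM_\epsilon$ follows from the normal attractivity of the critical manifolds, while stability within $\mcM_\epsilon$ follows from hyperbolicity of the planar cycle; combining the two gives the asymptotically stable limit cycle claimed. The reformulation in terms of large $v^*$ is immediate since $\epsilon_v = 1/v^*$.

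The main obstacle I anticipate is step two's trapping-region construction in the elliptic coordinates $(\delp, \pbar)$: the right-hand sides \eqref{eq:deltaPhiDot}--\eqref{eq:barPhiDot} carry the singular prefactor $1/(4\pbar^2 - \delp^2)^2$ and the saturation functions, so proving that a clean annular region is forward-invariant — in particular controlling the sign of the inward-pointing flow uniformly while staying away from the coordinate singularity at $\pbar = |\delp|/2$ (the segment joining the two goals) — requires careful case analysis of the vector field near the boundary. A secondary difficulty is confirming that the critical manifold $\mcM_0$ is genuinely normally hyperbolic and attracting everywhere along the limit cycle (not merely near the deadlock point), since the $O(1/\epsilon_v)$ terms in \eqref{eq:DeltamDot}--\eqref{eq:mBarDot} are nonlinear in $\mbar$; I would linearize the layer problem and check the eigenvalue signs along the relevant portion of $\mcM_0$, deferring any stubborn region to numerical verification consistent with the paper's stated reliance on numerical corroboration.
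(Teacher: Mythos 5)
Your overall strategy---reduce to a planar system, produce a limit cycle by Poincar\'e--Bendixson, and then use Fenichel theory (with Conjecture \ref{conj:Floquet} supplying the Floquet-multiplier hypothesis) to conclude persistence---is the same as the paper's. However, your reduction step has a concrete error that would prevent the argument from going through. You classify $\delm$ (along with $\mbar$) as a fast variable slaved to a critical manifold $(\delm,\mbar)=g(\delp,\pbar)$, and you end up with a ``planar'' system in $(\delp,\pbar)$. But $\delm$ is not fast. The apparent $O(1/\epsilon_v)$ terms in \eqref{eq:DeltamDot} are $\vbar\,\delm\,(1-2\mbar)/\epsilon_v + \delv\,(1-2\mbar)(1+\mbar)/\epsilon_v$: they all carry the factor $(1-2\mbar)$, and once $\mbar$ relaxes to where $1-2\mbar = O(\epsilon_v)$, these contributions become $O(1)$, so $\dot{\delm}=O(1)$. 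Setting the naive $O(1/\epsilon_v)$ terms of \eqref{eq:DeltamDot}--\eqref{eq:mBarDot} to zero only pins $\mbar=1/2$ and leaves $\delm$ undetermined---it does not produce a single-valued relation $\delm=g(\delp,\pbar)$. The paper handles this by introducing the rescaled fast variable $y_1 = (1-2\mbar)/\epsilon_v$ (together with $\delv$) and keeping $x=(\delp,\delm)$ as the two slow coordinates; the genuine slow manifold is then the nontrivial graph \eqref{eq:y1Definition}.

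The choice of slow coordinates is not cosmetic. Independently of the singular perturbation scaling, $\pbar$ and $\vbar$ converge to $1/2$; the paper proves this directly via a Lyapunov/nonautonomous LaSalle argument (Lemmas \ref{lem:Ginvariant}--\ref{lem:asymptoticConvergence}), not by singular perturbation, which is a step your proposal omits entirely. Consequently, a ``planar system in $(\delp,\pbar)$'' would have a globally attracting one-dimensional invariant set $\{\pbar=1/2\}$ and therefore cannot carry a limit cycle at all: the oscillation lives in $(\delp,\delm)$, where the reduced dynamics \eqref{eq:planarDynamics} have an unstable focus at the origin and the trapping region is simply the square $[-1,1]^2$, whose boundary is invariant (saddle corners and heteroclinic edges). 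This also dissolves your anticipated main difficulty: the singular prefactor $1/(4\pbar^2-\delp^2)^2$ and the elliptic-coordinate geometry never appear in the reduced planar argument, because $\pbar$ has already been sent to $1/2$ before the planar reduction. Once the slow/fast split and the planar coordinates are corrected, the remainder of your outline---Poincar\'e--Bendixson on the annulus around the repelling origin, normal hyperbolicity of the critical manifold, Fenichel persistence, and the reformulation via $\epsilon_v=1/v^*$---tracks the paper's proof.
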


\section{Illustrative Simulations}\label{sec:simulations}

Figure \ref{fig:bifurcationValue} summarizes the behavior of the system \eqref{eq:mean-differenceDynamics} as a function of the two parameters $\epsilon_v$ and $\epsilon_{\lambda}$. For large values of both parameters, the system exhibits a stable deadlock equilibrium, while for sufficiently small values of both parameters the system exhibits a stable limit cycle. As $\sigma$ is increased, the limit cycle tends towards a structure composed of a slow segment followed by a fast jump, which is characteristic of relaxation oscillations \cite{SMB-TE:86,JG:04}. Section \ref{sec:Hopf} studies the system in the limit $\epsilon_{\lambda} \to 0$ and analytically shows the existence of a Hopf bifurcation at $\epsilon_v = \epsilon_{v,0}$ ($\approx 0.09175$ for $\sigma = 8$). The blue line in Figure \ref{fig:bifurcationValue} shows the numerically-computed bifurcation value $\epsilon^*_v( \epsilon_{\lambda})$ for $\epsilon_{\lambda} > 0$. The numerically-computed limit $\lim_{\epsilon_\lambda \to 0} \epsilon^*_v(\epsilon_\lambda)$ corresponds well to the analytical value $\epsilon_{v,0}$.

Figures \ref{fig:pre-bifurcation}--\ref{fig:DualLimit} show simulations of the system \eqref{eq:mean-differenceDynamics} for four representative values of the parameters $\epsilon_v, \epsilon_{\lambda}$. We set $\sigma = 8$. In $\xi$ coordinates, the initial conditions were $x = 0, m_1 = 0, m_2 = 1/2, v_1 = v_2 = 0.1$. In the mean-difference coordinates $z$ this corresponds to $\delm=-1/2, \mbar=1/4, \delv = 0$, and $\vbar = 0.1$. This choice of initial conditions was made to avoid the deadlock equilibrium but was otherwise generic.

\begin{figure}[htbp]
\begin{center}
\includegraphics[width=3.5in]{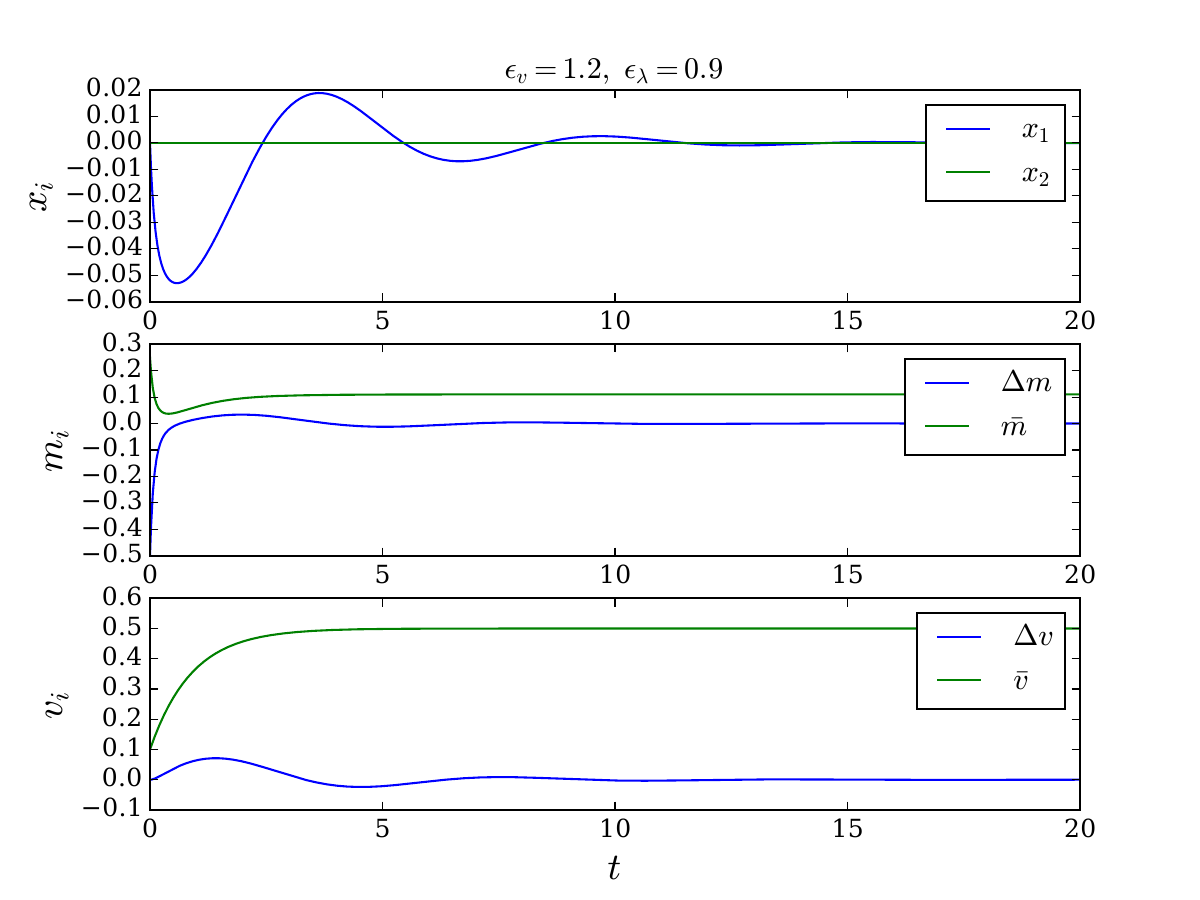}
\caption{Large values of both scales, represented by $\epsilon_v = 1.2, \epsilon_{\lambda} = 0.9$. The system converges to a stable deadlock equilibrium with minimal oscillation.}
\label{fig:pre-bifurcation}
\end{center}
\end{figure}

\begin{figure}[htbp]
\begin{center}
\includegraphics[width=3.5in]{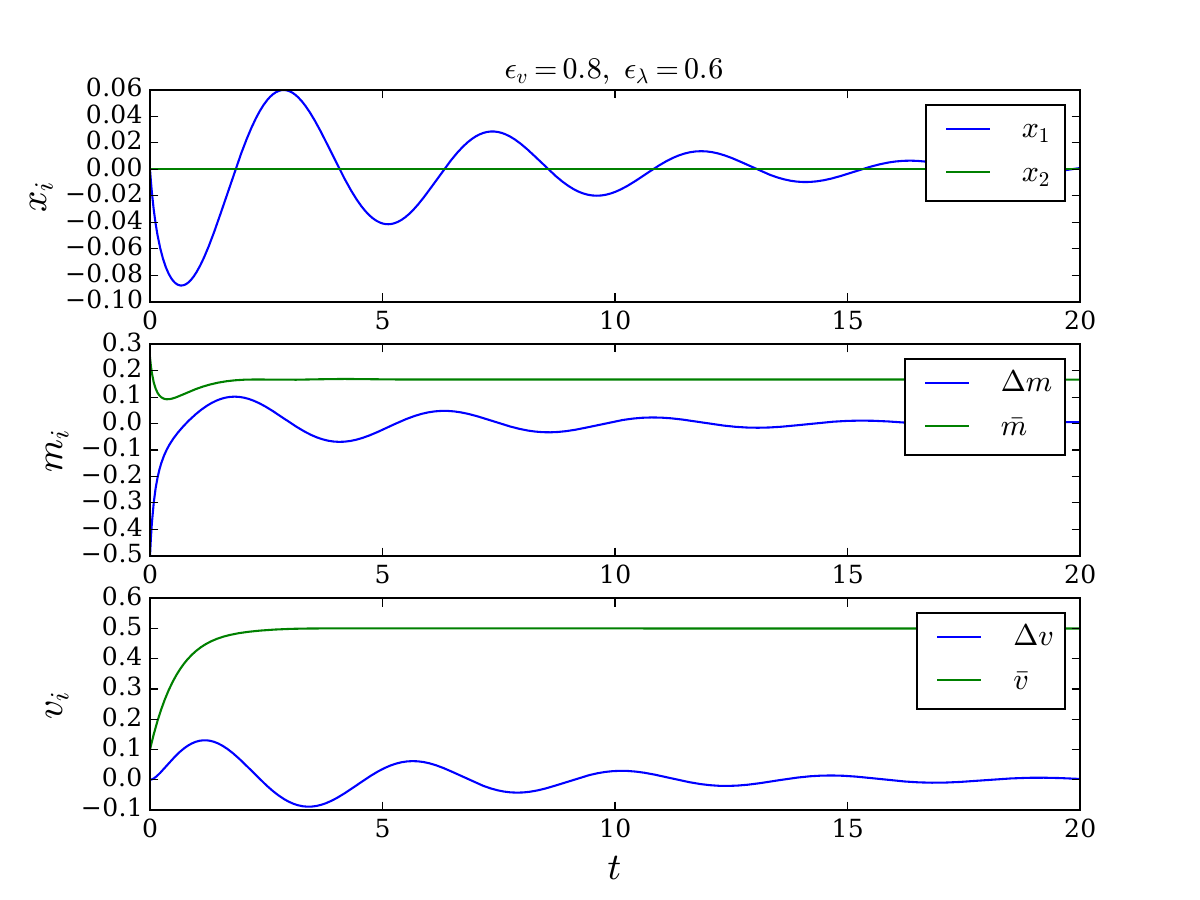}
\caption{Parameter values $\epsilon_v = 0.8, \epsilon_{\lambda} = 0.6$ that are near the Hopf bifurcation but still in the stable fixed point regime. The system displays damped oscillatory behavior that appears nearly linear, as to be expected near a Hopf bifurcation.}
\label{fig:dampedOscillation}
\end{center}
\end{figure}

\begin{figure}[htbp]
\begin{center}
\includegraphics[width=3.5in]{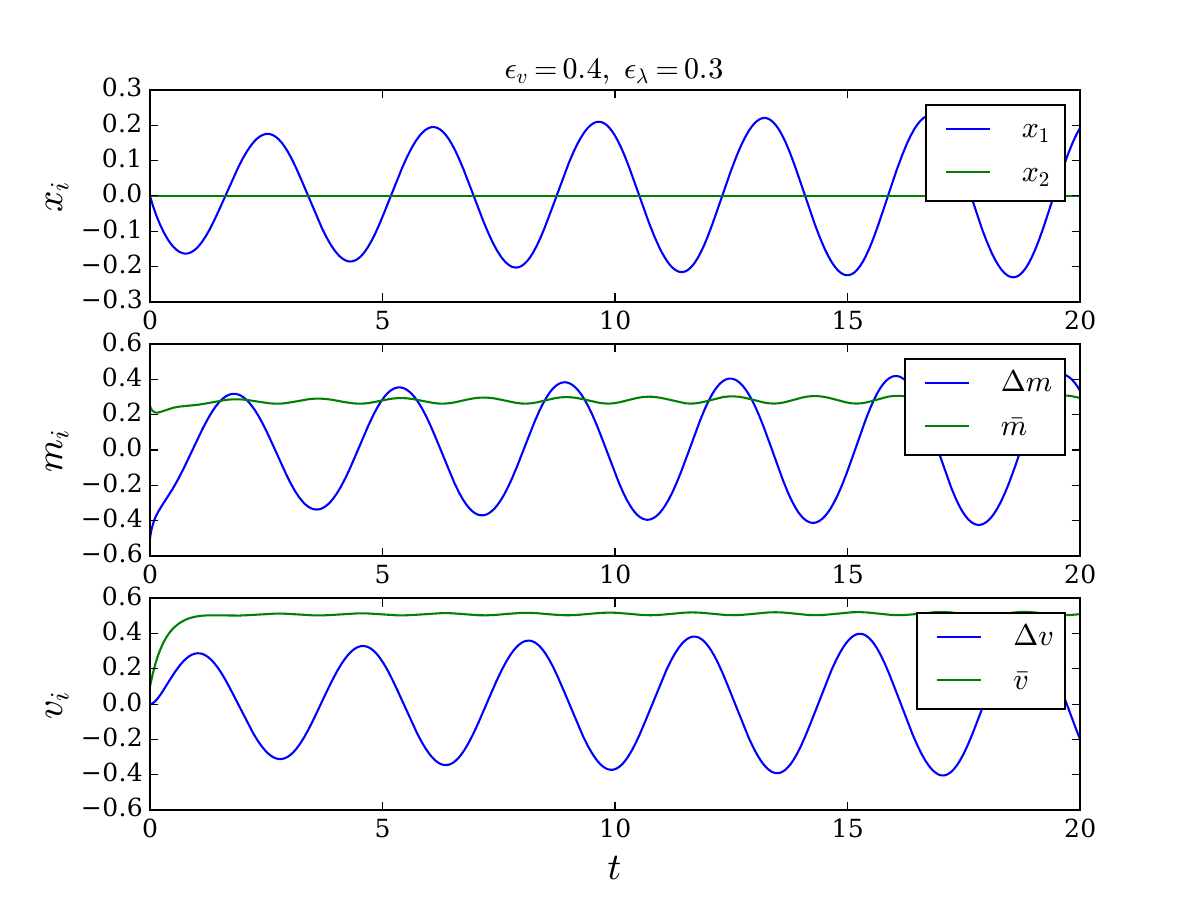}
\caption{Parameter values $\epsilon_v = 0.4, \epsilon_{\lambda} = 0.3$ that are near the Hopf bifurcation in the stable limit cycle regime. The system settles down to roughly ``harmonic'' oscillatory behavior whose (nearly) linear appearance is consistent with its proximity to the Hopf bifurcation.}
\label{fig:smallLimitCycle}
\end{center}
\end{figure}

\begin{figure}[htbp]
\begin{center}
\includegraphics[width=3.5in]{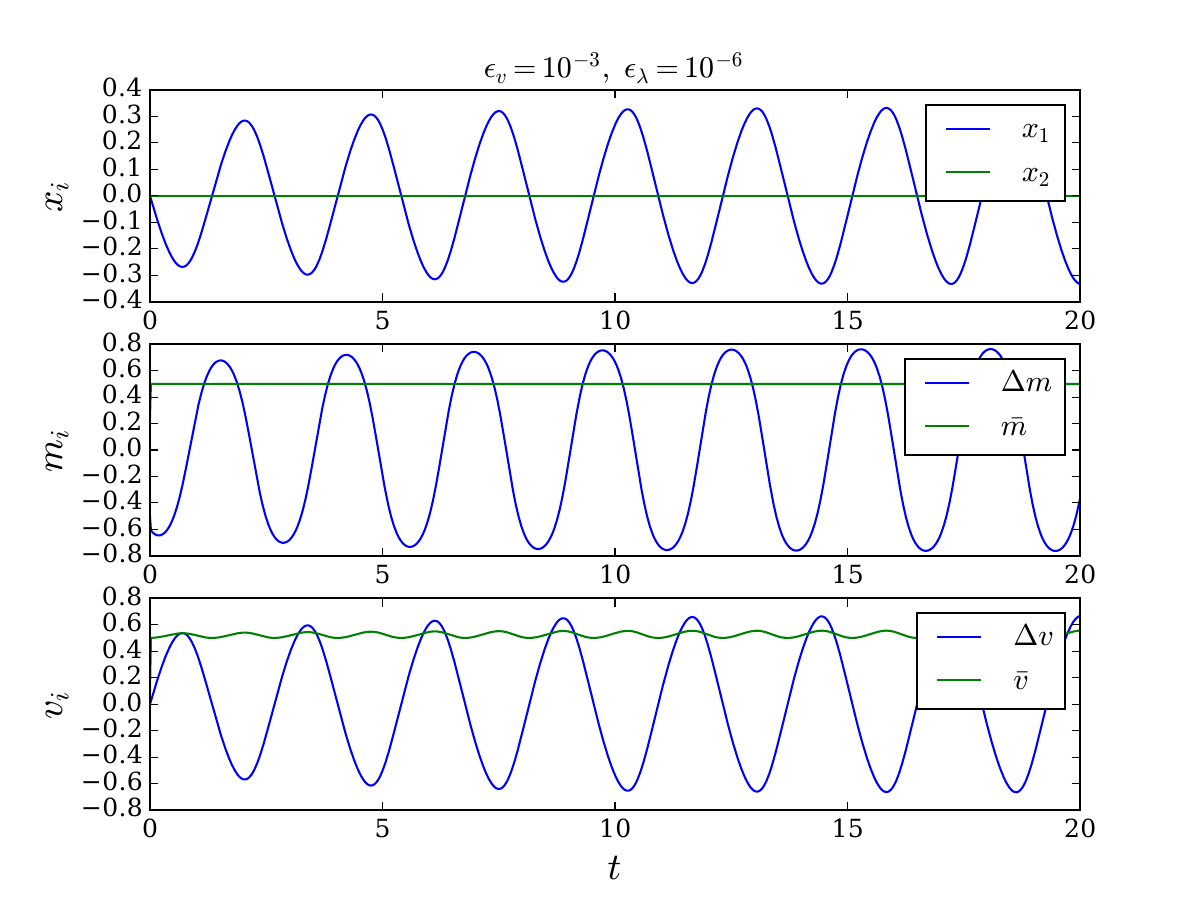}
\caption{Both parameters $\epsilon_v$, and $\epsilon_{\lambda}$ taken to be small, represented by $\epsilon_v = 10^{-3}, \epsilon_{\lambda} = 10^{-6}$. The system displays oscillatory behavior that appears weakly nonlinear. The $v$ variables are tightly coupled to the $x$ variables due to the small value of $\epsilon_{\lambda}$ in Equations \eqref{eq:DeltavDot} and \eqref{eq:vBarDot}. The variables $x_1, \delm,$ and $\delv$ oscillate in a coupled way, with $\delm$ oscillating in a nonlinear manner. Also note that the variables $x_2, \mbar,$ and $\vbar$ appear to stably converge to limiting values: this, combined with the coupling between $x_1$ and $\delv$, strongly suggests that the dynamics can be reduced to a two-dimensional system.}
\label{fig:DualLimit}
\end{center}
\end{figure}

Figure \ref{fig:pre-bifurcation} suggests that for large values of $\epsilon_v, \epsilon_{\lambda}$ there is a stable deadlock equilibrium in the system where no oscillations are present. Analyzing the dynamics, we see that this equilibrium corresponds to the state $z_d = (x_{1,d}, x_{2,d}, \delm_d, \mbar_d, \delv_d, \vbar_d)$, where $x_{1,d} = x_{2,d} = 0, \delm_d = \delp_d = 0, \vbar_d = 1/2$, and $\mbar_d$ solves the following quadratic equation:
\beq \label{eq:mBarDeadlockEqn}
- 2 (1 + \sigma \epsilon_v) \mbar_d^2 - (4 \epsilon_v^2 + 1) \mbar_d + 1 = 0,
\eeq
which has the solution $\mbar_d(\epsilon_v)$ given by
\begin{align} \label{eq:mBarDeadlock}
&\frac{-(4 \epsilon_v^2 + 1) + \sqrt{(4 \epsilon_v^2 + 1)^2 + 8 (1 + \sigma \epsilon_v)} }{4 (1 + \sigma \epsilon_v)}\\
= &\frac{-(4 \epsilon_v^2 + 1) + \sqrt{16 \epsilon_v^4 + 8 \epsilon_v^2 + 8 \sigma \epsilon_v +9 } }{4 (1 + \sigma \epsilon_v)}, \nonumber
\end{align}
which is clearly positive, as $\sigma$ and $\epsilon_v$ are both positive, which implies that the second term under the radical in \eqref{eq:mBarDeadlock} is positive. Figures \ref{fig:pre-bifurcation}--\ref{fig:DualLimit} suggest that the system undergoes a Hopf bifurcation as the parameters $\epsilon_v$ and $\epsilon_{\lambda}$ are decreased. In the following sections we carry out a series of analyses to characterize the bifurcation and study the resulting limit cycle.

\section{Hopf analysis in the limit $\epsilon_{\lambda} \to 0$}\label{sec:Hopf}
Motivated by the numerical evidence of a Hopf bifurcation occurring at the deadlock equilibrium, we consider the system \eqref{eq:mean-differenceDynamics} in the limit $\epsilon_{\lambda} \to 0$ and analytically show the existence of a Hopf bifurcation in this limiting case as $\epsilon_v$ is lowered through a critical value $\epsilon_{v,0}$. We then numerically consider the case of finite $\epsilon_{\lambda}$ and compute the bifurcation value $\epsilon_v^*(\epsilon_{\lambda})$ for a range of values of $\epsilon_{\lambda}$; the numerically-computed limit $\lim_{\epsilon_{\lambda} \to 0} \epsilon_v^*(\epsilon_{\lambda})$ matches the analytical result $\epsilon_{v,0}$, as shown in Figure \ref{fig:bifurcationValue}.

\subsection{Dynamics in the limit $\epsilon_{\lambda} \to 0$}
The limit dynamics \eqref{eq:dynamicsDirectFeedback} inherits the deadlock equilibrium $z_{rd} := h^\dagger(z_d)$ from the full dynamics \eqref{eq:mean-differenceDynamics}, where 
\beq \label{eq:reducedDeadlockEq}
z_{rd} := h^\dagger(z_d) = (x_{1rd}, x_{2rd}, \delm_{rd}, \mbar_{rd}),
\eeq
$x_{1rd} = x_{2rd} = \delm_{rd} = 0$, and $\mbar_{rd}$ again solves Equation \eqref{eq:mBarDeadlockEqn}.

In Figure \ref{fig:bifurcationDiagram} we show the numerically-computed bifurcation diagram for the system \eqref{eq:dynamicsDirectFeedback} with bifurcation parameter $\epsilon_v$. For large values of $\epsilon_v$, the deadlock equilibrium is stable. As $\epsilon_v$ is lowered below the critical value $\epsilon_{v,0}$, the system undergoes a Hopf bifurcation that results in a limit cycle. In Figure \ref{fig:bifurcationDiagram}, we plot the amplitude of the oscillations of $\delp$ for the limit cycle. As can be seen from Equation \eqref{eq:defZ}, $\delp$ is constrained to take values in $[-1, 1]$, so the limit cycle's amplitude is bounded above by $1$.

\begin{figure}[htbp]
\begin{center}
\includegraphics[width=3.5in]{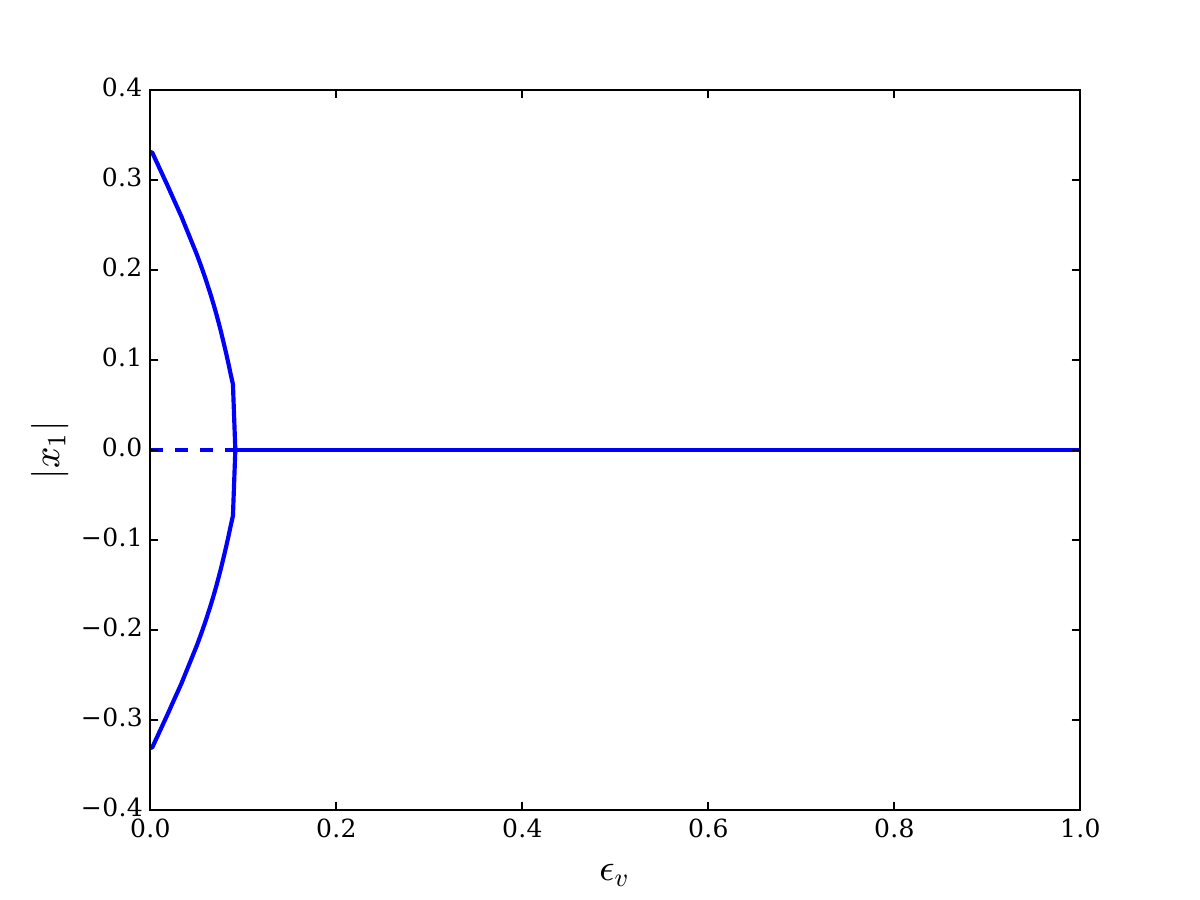}
\caption{The numerically-computed bifurcation diagram for the system \eqref{eq:dynamicsDirectFeedback} with $\epsilon_{\lambda} \to 0$ and bifurcation parameter $\epsilon_v$. The amplitude of the limit cycle is computed as the amplitude of the oscillations in $x_1$. We clearly see a supercritical Hopf bifurcation, with bifurcation value $\epsilon_{v,0}$. The stop signal parameter was set to $\sigma=8$.}
\label{fig:bifurcationDiagram}
\end{center}
\end{figure}

\begin{figure}[htbp]
\begin{center}
\includegraphics[width=3.5in]{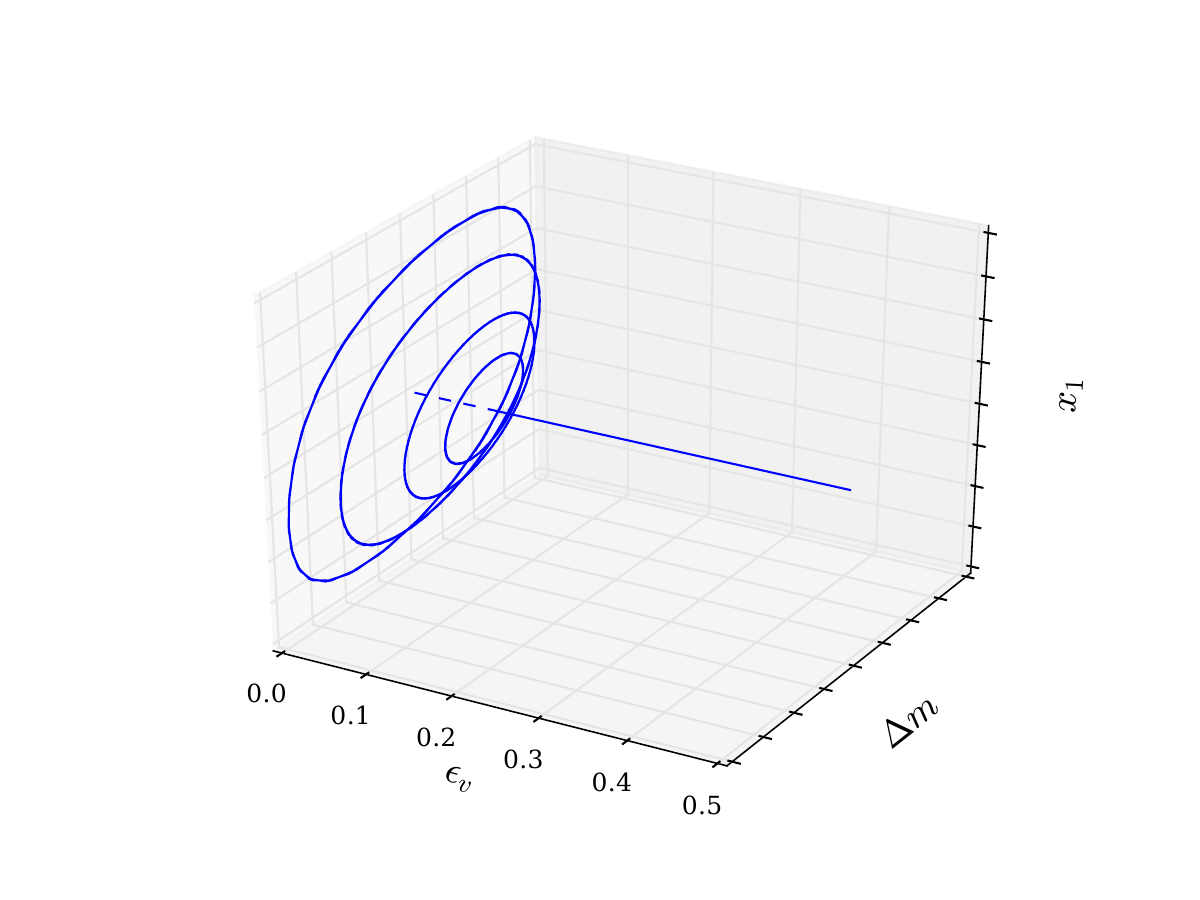}
\caption{The numerically-computed bifurcation diagram for the system \eqref{eq:dynamicsDirectFeedback} with $\epsilon_{\lambda} \to 0$ and bifurcation parameter $\epsilon_v$. The $x$-axis is the bifurcation parameter $\epsilon_v$, while the $y$ and $z$-axes are $\delm$ and $x_1$, respectively. As the bifurcation parameter is lowered below $\epsilon_{v,0}$, the deadlock equilibrium becomes unstable and gives birth to a limit cycle whose amplitude grows as $\epsilon_v \to 0$. The stop signal parameter was set to $\sigma=8$.}
\label{fig:bifurcationDiagram3d}
\end{center}
\end{figure}

\subsection{Analysis of the $\epsilon_{\lambda} \to 0$ dynamics} \label{sec:hopfBifurcation}
Inspired by the bifurcation diagram, we now seek to show the existence of the Hopf bifurcation suggested by Figure \ref{fig:bifurcationDiagram}. The following theorem from \cite{JG-PJH:13} summarizes the conditions under which a system undergoes Hopf bifurcation.
\begin{theorem}[Hopf bifurcation, {\cite[Theorem 3.4.2]{JG-PJH:13}}] \label{thm:hopfThm}
Suppose that the system $\dot z = f(z, \mu), z \in \bbR^n, \mu \in \bbR$, has an equilibrium $(z_0, \mu_0)$ and the following properties are satisfied:
\begin{enumerate}
\item The Jacobian $\left. D_z f \right|_{(z_0, \mu_0)}$ has a simple pair of pure imaginary eigenvalues
$\lambda(\mu_0)$ and $\bar \lambda (\mu_0)$ and no other eigenvalues with zero real parts,
\item $\left. \mathrm{d} (\mathrm{Re}\ \lambda(\mu))/ \mathrm{d} \mu \right|_{\mu = \mu_0} = d \neq 0.$
\end{enumerate}
Property 1) implies that there is a smooth curve of equilibria $(z(\mu), \mu)$ with $z(\mu_0) = z_0$. The eigenvalues $\lambda(\mu), \bar{\lambda}(\mu)$ of $\left. D_z f \right|_{(z(\mu_0), \mu_0)}$ which are imaginary at $\mu = \mu_0$ vary smoothly with $\mu$.

If Property 2) is satisfied, then there is a unique three-dimensional center manifold passing through $(z_0, \mu_0)$ in $\bbR^n \times \bbR$ and a smooth system of coordinates (preserving the planes $\mu$=const.) for which the Taylor expansion of degree 3 on the center manifold is given by \cite[(3.4.8)]{JG-PJH:13}. If $\left. \ell_1 \right|_{(z_0, \mu_0)} \neq 0$, there is a surface of periodic solutions in the center manifold which has quadratic tangency with the eigenspace of $\lambda(\mu_0), \bar{\lambda}(\mu_0)$ agreeing to second order with the parabaloid $\mu = -(\left. \ell_1 \right|_{(z_0, \mu_0)}/d)(x^2 + y^2)$. If $\left. \ell_1 \right|_{(z_0, \mu_0)} < 0$, then these periodic solutions are stable limit cycles, while if $\left. \ell_1 \right|_{(z_0, \mu_0)} > 0$, the periodic solutions are repelling.
\end{theorem}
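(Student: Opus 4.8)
The plan is to prove the theorem along the classical center-manifold-plus-normal-form route, obtaining the bifurcating periodic branch as a fixed point of a Poincar\'e return map via the implicit function theorem. First, note that Property 1) forces $0$ to be absent from the spectrum of $\left. D_z f \right|_{(z_0,\mu_0)}$, since its only purely imaginary eigenvalues are $\lambda(\mu_0) = i\omega_0$ and $\bar\lambda(\mu_0) = -i\omega_0$ with $\omega_0 > 0$; hence $\left. D_z f \right|_{(z_0,\mu_0)}$ is invertible, and the implicit function theorem applied to $f(z,\mu) = 0$ yields the asserted unique smooth branch of equilibria $z = z(\mu)$ with $z(\mu_0) = z_0$ (after translating, take $z(\mu) \equiv 0$). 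Simplicity of the critical eigenvalue pair makes $\lambda(\mu)$ depend smoothly on $\mu$; write $\lambda(\mu) = \alpha(\mu) + i\omega(\mu)$, so $\alpha(\mu_0) = 0$, $\omega(\mu_0) = \omega_0$, and Property 2) reads $\alpha'(\mu_0) = d \neq 0$.

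Second, suspend the system by appending $\dot\mu = 0$ and apply the center manifold theorem at the equilibrium $(0,\mu_0)$ of the flow on $\bbR^n \times \bbR$. Its linearization has the critical pair $\pm i\omega_0$, a zero eigenvalue along the $\mu$-axis, and an otherwise hyperbolic spectrum, so there is a locally invariant three-dimensional center manifold $W^c$ through $(0,\mu_0)$, tangent there to the span of the critical eigenspace and the $\mu$-axis, with transverse directions that are exponentially attracting or repelling. For each $\mu$ near $\mu_0$ the flow on the slice $W^c \cap \{\mu = \mathrm{const}\}$ is a planar vector field with an equilibrium at the origin whose linearization has eigenvalues $\alpha(\mu) \pm i\omega(\mu)$; because transverse stability is controlled by the hyperbolic block, phase portraits and stability types on $W^c$ transfer to the full system.

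Third, bring this planar family into Poincar\'e normal form: a $\mu$-dependent near-identity smooth change of coordinates reduces its degree-three Taylor expansion on $W^c$ to the form of \cite[(3.4.8)]{JG-PJH:13}, equivalently, in polar coordinates, $\dot r = \alpha(\mu) r + \ell_1 r^3 + O(r^5)$ and $\dot\theta = \omega(\mu) + O(r^2)$, where $\ell_1 = \left. \ell_1 \right|_{(z_0,\mu_0)}$ is the cubic coefficient (the first Lyapunov coefficient). If $\ell_1 \neq 0$, the truncated amplitude equation has the nontrivial branch $r^2 = -\alpha(\mu)/\ell_1$, present precisely on the side of $\mu_0$ where $\alpha(\mu)/\ell_1 < 0$; together with $\dot\theta \approx \omega_0 \neq 0$ this is a circle of periodic orbits of the truncated flow obeying $\mu - \mu_0 = -(\ell_1/d)\, r^2$ to leading order, which is the claimed quadratic tangency with the paraboloid $\mu = -(\left. \ell_1 \right|_{(z_0,\mu_0)}/d)(x^2 + y^2)$.

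Finally, upgrade from the truncation to the full system on a half-line section $\theta = 0$ of the planar family. Integrating over one revolution, the return map satisfies $P(r,\mu) = r + (2\pi/\omega_0)(\alpha(\mu) r + \ell_1 r^3) + O(r^5)$, so nonzero fixed points solve $Q(r,\mu) := (P(r,\mu) - r)/r = 0$ with $Q(0,\mu_0) = 0$ and $\left. \partial Q/\partial \mu \right|_{(0,\mu_0)} = (2\pi/\omega_0)\, d \neq 0$; the implicit function theorem then produces a smooth even solution $\mu = \mu(r)$ with $\mu(0) = \mu_0$ and $\mu''(0)$ proportional to $-\ell_1/d$, hence a one-parameter family of periodic orbits of the reduced flow and, by invariance of $W^c$, of the full system, realizing the stated tangency; moreover at such a fixed point $\partial P/\partial r - 1$ has the sign of $\ell_1 r^2$, so the orbit is an orbitally asymptotically stable limit cycle when $\ell_1 < 0$ and repelling when $\ell_1 > 0$, with this type inherited by the full flow since $W^c$ is exponentially attracting transverse to itself. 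The main obstacle is precisely this last passage: one must verify that the $O(r^5)$ remainder neither destroys the periodic branch nor changes its stability type, which is exactly what the implicit function argument on $Q$ delivers once Property 2) supplies the nondegeneracy $\partial_\mu Q \neq 0$ and $\ell_1 \neq 0$ supplies the leading nonlinearity; a secondary technicality is that $W^c$ is in general only finitely differentiable, so all Taylor expansions above are taken to whatever finite order ($\geq 3$) is required, with merely continuous remainders, still sufficient for the return-map argument.
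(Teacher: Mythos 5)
The paper does not prove this statement: it is quoted verbatim as Theorem~3.4.2 of Guckenheimer and Holmes \cite{JG-PJH:13} and invoked as a known result, so there is no in-paper proof to measure your attempt against. Your reconstruction---suspend the system with $\dot\mu = 0$, pass to the three-dimensional center manifold, bring the slice dynamics to Poincar\'e normal form, and locate the periodic branch as a transverse zero of $Q(r,\mu) = (P(r,\mu)-r)/r$ via the implicit function theorem, with stability read off from $\partial_r P - 1 \propto \ell_1 r^2$ at the nontrivial fixed point---is exactly the classical argument that the cited reference gives, and the steps are correct in outline (including the observation that Property~1 rules out a zero eigenvalue and thus yields the smooth equilibrium branch, and the caveat that $W^c$ is only finitely smooth).
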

The formulae for $\left. \ell_1 \right|_{(z_0, \mu_0)}$, the first Lyapunov coefficient, are given in Appendix \ref{app:LyapunovCoefficient}.
\begin{remark}
In the statement of Theorem \ref{thm:hopfThm} we used $\mu$ as the bifurcation parameter for consistency with the notation of \cite{JG-PJH:13}. In the analysis in this paper, $\epsilon_v$ plays the role of bifurcation parameter.
\end{remark}

The Hopf bifurcation theorem applies to our system, as summarized in Theorem \ref{thm:hopfResult} stated in Section \ref{sec:formalResults} and repeated below.

\textsc{Theorem} \ref{thm:hopfResult}. \textit{The system $\dot{z}_r = f_r(z_r, \epsilon_v)$ defined by  \eqref{eq:dynamicsDirectFeedback} has a deadlock equilibrium $z_{rd}$ given by \eqref{eq:reducedDeadlockEq}. For $\sigma > 6$, the dynamics undergo a Hopf bifurcation resulting in stable periodic solutions at $(z_{rd}, \epsilon_{v,0})$, where $\epsilon_{v,0}$ is the unique solution $\epsilon_v \in [0, 1/2]$ of $4(2-\sigma) \epsilon_v^2 -4(2-\sigma) \epsilon_v + (6-\sigma) = 0$. For $\sigma = 8$, $\epsilon_{v,0} = (3-\sqrt{6})/6 \approx 0.09175$.}

\begin{proof}[Proof of Theorem \ref{thm:hopfResult}]
Let $\epsilon_{v,0}$ be the solution $\epsilon_v \in [0, 1/2]$ of $4(2-\sigma) \epsilon_v^2 - 4(2-\sigma) \epsilon_v + (6-\sigma) = 0$, which exists for $\sigma > 6$. By Lemma \ref{lem:imaginaryEigenvalues}, the Jacobian $J_0$ of the system $\dot z_r = f(z_r, \epsilon_v)$ evaluated at the deadlock equilibrium $z_{rd}(\epsilon_v)$ has a simple pair of pure imaginary eigenvalues when $\epsilon_v = \epsilon_{v,0}$. Therefore, the first condition of the Hopf bifurcation theorem is satisfied.

Lemma \ref{lem:non-degeneracy} establishes that $\left. \dd (\mathrm{Re}\ \lambda(\epsilon_v))/\dd \epsilon_v \right|_{\epsilon_{v,0}} \neq 0$ for the two pure imaginary eigenvalues $\lambda$ so the second condition of the Hopf bifurcation theorem is satisfied. The result then follows: the system \eqref{eq:dynamicsDirectFeedback} undergoes a Hopf bifurcation as the parameter $\epsilon_v$ is lowered through its critical value $\epsilon_{v,0}$.

The first Lyapunov coefficient $\left. \ell_1 \right|_{(z_{rd}, \epsilon_{v,0})}$ is negative, as summarized by Lemma \ref{lem:lyapunovCoefficient}. This implies that the resulting limit cycles are stable.
\end{proof}

The following three lemmas, corresponding to the properties required by Theorem \ref{thm:hopfThm}, contain the detailed arguments behind the proof of Theorem \ref{thm:hopfResult}.

\begin{lemma} \label{lem:imaginaryEigenvalues}
Let $\sigma > 6$ and let $J_0  := D_{z_r} f_r ( z_{rd}, \epsilon_{v})$ be the Jacobian of the system $\dot z_r = f_r(z_r, \epsilon_v)$ defined by \eqref{eq:dynamicsDirectFeedback} evaluated at the deadlock equilibrium $z_{rd}$ given by \eqref{eq:reducedDeadlockEq}, considered as a function of $\epsilon_v$. Then, $J_0$ has a simple pair of two pure imaginary eigenvalues $\lambda(\epsilon_v)$ and $\bar{\lambda}(\epsilon_v)$ when $\epsilon_v = \epsilon_{v,0}$, where $\epsilon_{v,0}$ is the unique solution $\epsilon_v \in [0, 1/2]$ of $4(2-\sigma)\epsilon_v^2 - 4(2-\sigma)\epsilon_v + (6-\sigma) = 0$.
\end{lemma}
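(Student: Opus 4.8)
The plan is to compute the Jacobian $J_0 = D_{z_r} f_r(z_{rd}, \epsilon_v)$ explicitly, exploit the block structure induced by the symmetries of the deadlock equilibrium, and then analyze the resulting characteristic polynomial. First I would evaluate the partial derivatives of the four components $f_{\delp}, f_{\pbar}, f_{\delm}, f_{\mbar}$ (with $\delv = \delp$, $\vbar = \pbar$ substituted, per the reduction $z = h(z_r)$) at $z_{rd}$, where $\delp_{rd} = \delm_{rd} = 0$, $\pbar_{rd} = 1/2$, and $\mbar_{rd}$ solves \eqref{eq:mBarDeadlockEqn}. The key structural observation is that the deadlock equilibrium lies on the symmetric subspace $\delp = \delm = 0$: because $f_{\delp}$ and $f_{\delm}$ are odd and $f_{\pbar}, f_{\mbar}$ are even under the reflection $(\delp, \delm) \mapsto (-\delp, -\delm)$ (which is the natural $\bbZ_2$ symmetry swapping the two tasks), the Jacobian at $z_{rd}$ is block-diagonal with respect to the splitting into the ``difference'' coordinates $(\delp, \delm)$ and the ``mean'' coordinates $(\pbar, \mbar)$. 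I would verify this block structure directly from \eqref{eq:deltaPhiDot}--\eqref{eq:mBarDot} by checking that $\partial f_{\delp}/\partial \pbar$, $\partial f_{\delp}/\partial \mbar$, $\partial f_{\delm}/\partial \pbar$, $\partial f_{\delm}/\partial \mbar$ and their transposed counterparts all vanish at the equilibrium.

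Given this decomposition, the spectrum of $J_0$ is the union of the spectra of two $2\times 2$ blocks. I would argue that the ``mean'' block $B_m$ (governing $(\pbar, \mbar)$) has eigenvalues with strictly negative real part for all $\epsilon_v > 0$ in the relevant range — this is consistent with the numerical observation (Figure~\ref{fig:DualLimit}) that $\pbar, \mbar, \vbar$ converge stably to $1/2$ — so it contributes no imaginary axis crossing. The pure imaginary pair must therefore come from the ``difference'' block $B_d$, governing $(\delp, \delm)$. Setting $\eta \to 0$ simplifies the $\sat$ terms to their unsaturated values (since $\pbar_{rd} = 1/2 \gg \eta$), and I would compute $\tr B_d$ and $\det B_d$ as explicit rational functions of $\epsilon_v$ (with $\sigma = 4$, $c = 1$ substituted). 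The condition for a pure imaginary pair is $\tr B_d = 0$ together with $\det B_d > 0$. I expect $\tr B_d = 0$ to reduce, after clearing denominators, precisely to $(1 - 4\epsilon_v^2)^2 - 2\epsilon_v = 0$; one then checks that the smaller real root $\epsilon_{v,0}(0) \approx 0.262$ indeed gives $\det B_d > 0$, so the eigenvalues are $\pm i\sqrt{\det B_d}$, a simple pair, and (from the block-diagonal structure) no other eigenvalue lies on the imaginary axis. Simplicity follows because $\det B_d \neq 0$ there, so the two eigenvalues of $B_d$ are distinct.

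Finally, for the persistence claim at small $\eta > 0$: the entries of $J_0$ depend smoothly (indeed analytically) on $\eta$ near $\eta = 0$, since the only $\eta$-dependence enters through $\sat(2\pbar \pm \delp, 2\eta)$ evaluated near the argument $1$, which is bounded away from the non-smooth point. Hence $\tr B_d(\epsilon_v, \eta)$ is a smooth function vanishing at $(\epsilon_{v,0}(0), 0)$; I would invoke the implicit function theorem, using that $\partial (\tr B_d)/\partial \epsilon_v \neq 0$ at that point (this non-vanishing is exactly what Lemma~\ref{lem:non-degeneracy} will establish, and I would cite it here, or alternatively verify the derivative of $(1-4\epsilon_v^2)^2 - 2\epsilon_v$ is nonzero at $\epsilon_{v,0}$, which is elementary), to produce a smooth branch $\epsilon_{v,0}(\eta)$ along which $\tr B_d = 0$; continuity of $\det B_d$ keeps it positive and bounded away from $0$ for small $\eta$, preserving simplicity and the imaginary-axis location. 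The main obstacle I anticipate is purely computational: carrying out the differentiation of the rather involved expressions \eqref{eq:deltaPhiDot} and \eqref{eq:DeltamDot} at the equilibrium without error, and correctly simplifying $\tr B_d$ to the stated quartic — the conceptual content (symmetry-induced block-diagonalization, trace-determinant test, IFT continuation) is routine, but the algebra is delicate enough that I would want to cross-check it with a computer algebra system.
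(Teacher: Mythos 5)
Your proposal follows essentially the same route as the paper: the Jacobian at $z_{rd}$ (Claim~\ref{claim:jacobian}) is exactly the block structure you predict from the $\bbZ_2$ symmetry, the characteristic polynomial factors as $(\lambda-j_{22})(\lambda-j_{44})p_2(\lambda)$ with the two linear factors manifestly negative, and the pure-imaginary condition is $\tr B_d = j_{11}+j_{33} = 0$, which combined with the deadlock constraint \eqref{eq:mBarDeadlockEqn} yields $(1-4\epsilon_v^2)^2 - 2\epsilon_v = 0$ in the $\eta\to 0$ limit, and the $\eta$-branch $\epsilon_{v,0}(\eta)$ is obtained via the implicit function theorem exactly as you suggest. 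The only minor point is that the positivity of $\det B_d$ (equivalently negativity of the discriminant $\Delta_p$) that guarantees the pair is genuinely imaginary rather than real-and-opposite is deferred in the paper to the proof of Lemma~\ref{lem:non-degeneracy}, whereas you would rightly check it here.
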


\begin{proof}
See Appendix \ref{app:imaginaryEigenvalues}.
\end{proof}

\begin{lemma} \label{lem:non-degeneracy}
Let $\lambda(\epsilon_v)$ and $\bar{\lambda}(\epsilon_v)$ be the simple pair of pure imaginary eigenvalues and let $\epsilon_{v,0}$ be as defined in Lemma \ref{lem:imaginaryEigenvalues}. Then, $\left. \dd (\mathrm{Re}\ \lambda(\epsilon_v)) /\dd \epsilon_v \right|_{\epsilon_{v,0}} < 0$.
\end{lemma}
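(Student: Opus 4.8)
The plan is to reduce the transversality requirement to the sign of a scalar derivative, evaluate it first in the clean limit $\eta = 0$, and then propagate the sign to small $\eta > 0$ by a continuity argument.

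By the factorization of the characteristic polynomial recorded in Lemma~\ref{lem:imaginaryEigenvalues}, the conjugate eigenvalue pair $\lambda(\epsilon_v),\bar\lambda(\epsilon_v)$ are the roots of $p_2(\lambda) = \lambda^2 - (j_{11}+j_{33})\lambda + (j_{11}j_{33}-j_{13}j_{31})$, so (with negative discriminant, as holds near the bifurcation since $j_{11}j_{33}-j_{13}j_{31}>0$ there) their common real part is $\mathrm{Re}\,\lambda(\epsilon_v) = \tfrac12\big(j_{11}(\epsilon_v)+j_{33}(\epsilon_v)\big)$, where the Jacobian entries of Appendix~\ref{app:jacobian} are regarded as functions of $\epsilon_v$ after substituting the deadlock value $\mbar_{rd}=\mbar_d(\epsilon_v)$ from \eqref{eq:mBarDeadlock}. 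Hence $\tfrac{\dd}{\dd\epsilon_v}\mathrm{Re}\,\lambda(\epsilon_v) = \tfrac12\tfrac{\dd}{\dd\epsilon_v}(j_{11}+j_{33})$, the total derivative carrying the implicit $\epsilon_v$-dependence through $\mbar_d$, and the lemma is the assertion that this quantity is strictly negative at $\epsilon_{v,0}(\eta)$.

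For $\eta = 0$ everything is explicit: $j_{11},j_{33}$, the deadlock value $\mbar_d(\epsilon_v)$ of \eqref{eq:mBarDeadlock}, and hence $\mbar_d'(\epsilon_v)$, all have closed forms, so one substitutes and differentiates. To make the sign transparent I would exploit that $j_{11}+j_{33}$ is affine in $\mbar$ (evident from \eqref{eq:j11}--\eqref{eq:j44}), hence factors as $j_{11}+j_{33} = b(\epsilon_v)\big(\mbar - M(\epsilon_v)\big)$ with $M(\epsilon_v)=(1-4\epsilon_v^2)/2$ the annihilating value --- this is \eqref{eq:bifnPoint} at $\eta = 0$ --- and $b(\epsilon_v)$ nonvanishing. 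Along the equilibrium branch $T(\epsilon_v) := b(\epsilon_v)\big(\mbar_d(\epsilon_v)-M(\epsilon_v)\big)$, and since $\mbar_d(\epsilon_{v,0}) = M(\epsilon_{v,0})$ the product rule gives $T'(\epsilon_{v,0}) = b(\epsilon_{v,0})\big(\mbar_d'(\epsilon_{v,0})-M'(\epsilon_{v,0})\big)$. The sign of $\mbar_d'-M'$ at $\epsilon_{v,0}$ is pinned by differentiating the deadlock relation \eqref{eq:mBarDeadlockEqn} (whose $\mbar$-derivative $-4(1+\sigma\epsilon_v)\mbar-(4\epsilon_v^2+1)$ is negative for $\mbar>0$) together with \eqref{eq:bifnPoint}; equivalently it follows from $\partial g_1/\partial\epsilon_v(\epsilon_{v,0},0)\approx 5.04 > 0$ established in Lemma~\ref{lem:imaginaryEigenvalues}, since $g_1$ arises exactly by forcing $\mbar_d = M(\epsilon_v)$ in \eqref{eq:mBarDeadlockEqn}. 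Combining with the sign of $b(\epsilon_{v,0})$ read off from \eqref{eq:j11}--\eqref{eq:j44} yields $T'(\epsilon_{v,0}) < 0$, i.e.\ $\tfrac{\dd}{\dd\epsilon_v}\mathrm{Re}\,\lambda < 0$, at $\eta = 0$.

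Finally I would extend to small $\eta > 0$ by continuity. The entries $j_{kl}(\epsilon_v,\mbar,\eta)$, the deadlock value $\mbar_d(\epsilon_v,\eta)$, and the curve of critical values $\epsilon_{v,0}(\eta)$ --- which is $C^1$ near $\eta = 0$ with $\epsilon_{v,0}(0)\approx 0.262$, by the implicit-function argument closing the proof of Lemma~\ref{lem:imaginaryEigenvalues} --- are smooth in $(\epsilon_v,\eta)$ near $(\epsilon_{v,0}(0),0)$, the saturation corrections entering only at order $\eta^2$. Thus $\eta \mapsto \tfrac{\dd}{\dd\epsilon_v}\mathrm{Re}\,\lambda(\epsilon_v)\big|_{\epsilon_v=\epsilon_{v,0}(\eta)}$ is continuous and strictly negative at $\eta = 0$, hence strictly negative for all sufficiently small $\eta > 0$, which is the claim. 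The main obstacle is the sign bookkeeping: obtaining \emph{negativity} rather than mere nonvanishing rests on tracking signs consistently through the equivalent forms $j_{11}+j_{33}$, $g_1$, and the deadlock quadratic, and in particular on confirming that the coefficient $b(\epsilon_{v,0})$ has the sign that makes $b(\epsilon_{v,0})\big(\mbar_d'(\epsilon_{v,0})-M'(\epsilon_{v,0})\big)$ negative. The prudent route is to carry the $\eta = 0$ evaluation through in fully explicit closed form from the appendix formulae and to invoke the continuity argument only for the $\eta > 0$ persistence.
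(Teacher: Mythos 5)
Your overall framework --- reduce to the sign of $(j_{11}+j_{33})'$ at $\epsilon_{v,0}$, evaluate at $\eta=0$, propagate to small $\eta>0$ by continuity --- is the same as the paper's, but the affine factorization $j_{11}+j_{33}=b(\epsilon_v)\bigl(\mbar-M(\epsilon_v)\bigr)$ (with $b=-1/\epsilon_v$ and $M=(1-4\epsilon_v^2)/2$ at $\eta=0$) is a genuinely cleaner reorganization of the key step. Once $\mbar_d(\epsilon_{v,0})=M(\epsilon_{v,0})$ is used, the product rule collapses the target derivative to $b(\epsilon_{v,0})\bigl(\mbar_d'-M'\bigr)$, and implicit differentiation of the deadlock quadratic $g_0(\epsilon_v,\mbar)=0$ gives $\mbar_d'-M'=-G'(\epsilon_{v,0})/\partial_{\mbar}g_0$ with $G(\epsilon_v):=g_0\bigl(\epsilon_v,M(\epsilon_v)\bigr)$; checking three signs ($G'>0$, $\partial_{\mbar}g_0<0$, $b<0$) then gives the result. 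This is shorter than the paper's route, which differentiates the Jacobian entries directly to arrive at $-4-\mbar_{rd}'/\epsilon_v$, computes $\mbar_{rd}'$ from the closed form involving $\sqrt{\Delta}$, and controls the sign through the polynomial comparison $n_1<4n_2<\sqrt{\Delta}\,n_2$.

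The gap is that you flag the sign chain as ``the main obstacle'' and then leave it unresolved: $T'(\epsilon_{v,0})<0$ is asserted but not derived, and the one shortcut you offer in its place (``equivalently it follows from $\partial g_1/\partial\epsilon_v\approx 5.04>0$'') is exactly where the bookkeeping is delicate. The quantity $g_1$ relates to $G$ only up to a nonzero multiplicative factor whose sign you must pin down; in fact the paper's printed $g_1(\epsilon_v,0)=(1-4\epsilon_v^2)^2-2\epsilon_v$ is the \emph{negative} of the bracket obtained by substituting $\mbar=M(\epsilon_v)$ into \eqref{eq:mBarDeadlockEqn} and removing the positive prefactor (the quoted derivative $-64\epsilon_{v,0}^3+16\epsilon_{v,0}+2\approx 5.04$ is the derivative of that bracket, not of the printed $g_1$), so pointing at $g_1$ alone could flip the conclusion's sign. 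To close the proof you need to carry out the implicit differentiation explicitly --- $\mbar_d'=-\partial_{\epsilon_v}g_0/\partial_{\mbar}g_0$ with numerator $-2\sigma\mbar^2-8\epsilon_v\mbar<0$ and denominator $-4(1+\sigma\epsilon_v)\mbar-(4\epsilon_v^2+1)<0$, together with $M'=-4\epsilon_v$ --- and then verify the three signs directly; the decomposition you set up makes this a short computation, but as written the crux of the lemma is left to the reader.
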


\begin{proof}
See Appendix \ref{app:non-degeneracy}.
\end{proof}

\begin{lemma} \label{lem:lyapunovCoefficient}
Let $\ell_1 = \left. \ell_1 \right|_{(z_0, \epsilon_0)}$ be the first Lyapunov coefficient of the dynamics \eqref{eq:dynamicsDirectFeedback} evaluated at the deadlock equilibrium $z_{rd}$ given by \eqref{eq:reducedDeadlockEq}. Then $\left. \ell_1 \right|_{(z_0, \epsilon_0)} < 0.$
\end{lemma}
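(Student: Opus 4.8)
The plan is to verify the sign of the first Lyapunov coefficient $\ell_1$ by direct substitution into the standard Guckenheimer--Holmes formula (recalled in Appendix \ref{app:LyapunovCoefficient}), evaluated at the bifurcation point $(z_{rd}, \epsilon_{v,0})$ identified in Lemma \ref{lem:imaginaryEigenvalues}. First I would work in the limit $\eta \to 0$, where $\epsilon_{v,0} \approx 0.262$ is the smaller root of $(1-4\epsilon_v^2)^2 - 2\epsilon_v = 0$ and the Jacobian $J_0$ has the block structure exhibited in the proof of Lemma \ref{lem:imaginaryEigenvalues}: the two ``mean'' directions $(\pbar, \mbar)$ decouple at the linear level (eigenvalues $j_{22}, j_{44} < 0$), so the center manifold is tangent to the $(\delp, \delm)$-plane and the relevant planar system for computing $\ell_1$ is the two-dimensional system in $(\delp, \delm)$ obtained by restricting $f_r$ and eliminating the fast stable directions to the required order. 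I would put the linear part into the real canonical form $\begin{bmatrix} 0 & -\omega_0 \\ \omega_0 & 0 \end{bmatrix}$ via an explicit linear change of coordinates built from the eigenvector of $J_0$ associated with $i\omega_0$, where $\omega_0^2 = j_{11}j_{33} - j_{13}j_{31}$ evaluated at $\epsilon_{v,0}$.

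Next I would compute the quadratic and cubic Taylor coefficients of $f_{\delp}$ and $f_{\delm}$ at $z_{rd}$. The key simplification is symmetry: the deadlock equilibrium sits at $\delp = \delm = 0$ and the reduced vector field is equivariant under the reflection $(\delp, \delm) \mapsto (-\delp, -\delm)$ (this is the $1 \leftrightarrow 2$ task-swap symmetry, which is exact once the mean variables are at their deadlock values $\pbar = \vbar = 1/2$). Consequently all \emph{even}-order terms in the $(\delp,\delm)$ expansion vanish identically, so only the cubic terms contribute to $\ell_1$, and the Guckenheimer--Holmes formula collapses to the sum of the relevant third partial derivatives $f_{xxx} + f_{xyy} + g_{xxy} + g_{yyy}$ (in canonical coordinates), with no $f_{xx}g_{xx}$-type products from the quadratic terms. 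I would extract these cubic coefficients from \eqref{eq:deltaPhiDot} and \eqref{eq:DeltamDot} after the substitution $z = h(z_r)$ and $\pbar = \mbar$-at-deadlock, then evaluate the resulting rational expression in $\epsilon_v$ at $\epsilon_v = \epsilon_{v,0}$, using the defining relation $(1-4\epsilon_{v,0}^2)^2 = 2\epsilon_{v,0}$ to reduce it to a number; the claim is that this number is negative. Finally, continuity of $\ell_1$ in $\eta$ (it is a rational function of $\epsilon_{v,0}(\eta)$ and $\eta$, with $\epsilon_{v,0}(\eta)$ continuous by Lemma \ref{lem:imaginaryEigenvalues}) extends the strict inequality to all sufficiently small $\eta > 0$.

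The main obstacle is the sheer bulk of the symbolic computation: even after exploiting the reflection symmetry to kill the quadratic terms and restricting to the center manifold, the cubic coefficients of \eqref{eq:deltaPhiDot}--\eqref{eq:DeltamDot} are messy rational functions of $\epsilon_v$ (the $f_{\delp}$ prefactor $(\delp^2-1)/(4\pbar^2-\delp^2)^2$ and the $1/(2\vbar \pm \delv)$ terms in $f_{\delm}$ both need Taylor expansion), and one must be careful that the elimination of the stable mean directions does not feed back a cubic correction — I would check that it does not, since the coupling of $(\delp,\delm)$ into $(\pbar,\mbar)$ is quadratic (even) and hence the induced correction to the $(\delp,\delm)$ dynamics is at least quartic, leaving the cubic coefficients exactly those read off from the restricted vector field. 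The verification that the final closed-form expression in $\epsilon_{v,0}$ is negative is then a routine numerical check, analogous to the endgame of Lemma \ref{lem:non-degeneracy}; I would present it as an explicit polynomial inequality in $\epsilon_{v,0}$, reduced using $(1-4\epsilon_{v,0}^2)^2 - 2\epsilon_{v,0} = 0$, and confirmed by the bound $\epsilon_{v,0} > 0.26$.
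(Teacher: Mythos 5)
Your symmetry observation is correct and does kill the quadratic terms of the reduced planar vector field, but the inference you draw from it — that the center-manifold reduction contributes nothing at cubic order — is false, and this is exactly where the bulk of $\ell_1$ lives.

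Here is the error. You argue that because the coupling of $(\delp,\delm)$ into $(\pbar,\mbar)$ starts at quadratic order (so that the graph of the center manifold, $\mbar = \mbar_{rd} + h_{\mbar}(\delp,\delm)$, has $h_{\mbar}$ starting at order two), the feedback into the $(\delp,\delm)$ dynamics must be at least quartic. That would require the $\dot{\delm}$ equation to depend on $\delta\mbar := \mbar - \mbar_{rd}$ only at quadratic order in $(\delp,\delm,\delta\mbar)$. But it does not: $f_{\delm}$ contains the mixed term $\pbar\,\delm\,(1-2\mbar)/\epsilon_v$, whose second partial $\partial^2 f_{\delm}/\partial\delm\,\partial\mbar$ equals $-1/\epsilon_v$ at the equilibrium (this is the $B_{334}$ coefficient in Appendix \ref{app:HopfCriticality}). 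Composing that \emph{linear-in-$\delm$, linear-in-$\delta\mbar$} quadratic with $\delta\mbar = h_{\mbar,2}(\delp,\delm)$ (quadratic) yields a genuinely cubic correction to the reduced $\dot{\delm}$ equation — not a quartic one. Your count silently replaced ``linear $\times$ quadratic'' with ``quadratic $\times$ quadratic.'' The same mechanism shows up through $B_{314}$, and through the quadratic output $B_4(q,\bar q)$ of $f_{\mbar}$. In the formula the paper actually uses (Kuznetsov's, Appendix \ref{app:LyapunovCoefficient}), these are precisely the $T_2 = -2\langle p, B(q, J_0^{-1}B(q,\bar q))\rangle$ and $T_3 = \langle p, B(\bar q, (2i\omega_0 I - J_0)^{-1}B(q,q))\rangle$ contributions, and Appendix \ref{app:HopfCriticality} computes both to be nonzero (indeed, both strictly negative). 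Your proposal would compute only the analogue of $T_1$.

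As it happens, the conclusion would still hold numerically because the paper shows $\realPart{T_1}$, $\realPart{T_2}$, and $\realPart{T_3}$ are each negative at $\epsilon_{v,0}$, so dropping $T_2,T_3$ preserves the sign by accident. But the justification you give for dropping them is invalid, and in a less fortunate problem it would change the answer. To repair the argument, either (i) keep the computation four-dimensional and use the Kuznetsov formula, letting $J_0^{-1}B(q,\bar q)$ and $(2i\omega_0 I - J_0)^{-1}B(q,q)$ supply the center-manifold coefficients automatically (this is what the paper does); or (ii) explicitly solve for the quadratic part of $h_{\mbar},h_{\pbar}$, substitute it back to obtain the corrected cubic coefficients of the planar reduced field, and only then apply the two-dimensional normal-form formula. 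The symmetry argument is still a useful simplification under either route, since it does eliminate the genuinely-quadratic terms of the reduced planar field and hence the $f_{xy}(f_{xx}+f_{yy}) - g_{xy}(g_{xx}+g_{yy}) - f_{xx}g_{xx} + f_{yy}g_{yy}$ block in the two-dimensional formula; it just does not eliminate the cubic terms inherited from the center manifold.
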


\begin{proof}
See Appendix \ref{app:HopfCriticality}.
\end{proof}

Theorem \ref{thm:hopfResult} then follows as a consequence of Lemmas \ref{lem:imaginaryEigenvalues}, \ref{lem:non-degeneracy}, and \ref{lem:lyapunovCoefficient}.

The implication of Theorem \ref{thm:hopfResult} is that the system \eqref{eq:dynamicsDirectFeedback} resulting from the limit $\epsilon_{\lambda} \to 0$ has a Hopf bifurcation at $\epsilon_v^*(0) = \epsilon_{v,0}$. However, as can be seen in Figure \ref{fig:smallLimitCycle}, limit cycle behavior persists for finite $\epsilon_{\lambda}$. One can numerically compute the eigenvalues of the linearization of the system \eqref{eq:mean-differenceDynamics} evaluated at the deadlock equilibrium $z_d$ and numerically show that a Hopf bifurcation occurs at a value $\epsilon_v^*(\epsilon_{\lambda})$. Figure \ref{fig:bifurcationValue} shows the numerically-computed values of $\epsilon_v^*(\epsilon_{\lambda})$ for a range of values of $\epsilon_{\lambda}$. It is clear that the numerical value for the limit $\lim_{\epsilon_{\lambda} \to 0} \epsilon_v^*(\epsilon_{\lambda})$ coincides with the analytical value $\epsilon_{v,0}$.

\section{Reduction to a planar limit cycle in the joint limit $\epsilon_{\lambda}, \epsilon_v \to 0$}\label{sec:reduction}
The results from Section \ref{sec:Hopf} strongly suggest the existence of a stable limit cycle for finite $\epsilon_v, \epsilon_{\lambda}$. In this and the following section we make this conclusion rigorous by performing a series of reductions collapsing the dynamics \eqref{eq:mean-differenceDynamics} to a planar system in the joint limit $\epsilon_{\lambda} \to 0, \epsilon_v \to 0$. A Poincar\'e-Bendixson argument affords the conclusion that the planar system exhibits a stable limit cycle. Then, in the next section, we show that this limit cycle persists for small but finite $\epsilon$ by applying results from geometric singular perturbation theory.

\subsection{A five dimensional attracting invariant submanifold}
In our first reduction, formalized in Lemma \ref{lem:asymptoticConvergence}, we note that $x_2$ must asymptotically converge to $0$ independent of the other states' behavior and that $x_1$ is attracted to the interval $[-1,1]$. Geometrically, this can be interpreted as the robot being attracted to the convex hull of its goal states $x_1^*, x_2^*$. This observation reveals an attracting invariant submanifold of dimension five whose restriction dynamics we then study.

We begin by considering the dynamics of $x_2$ independently of the other five dynamical variables, which gives us a nonautonomous system $\dot x_2 = f_{x_2}(t, x_2).$ The following results from \cite{JPL:68} concern the asymptotic behavior of a nonautonomous system
\beq \label{eq:nonautonomousSystem}
\dot x = f(t, x)
\eeq
defined on $G \subseteq \bbR^n$. Let $G^*$ be an open set of $\bbR^n$ containing $\bar G$, the closure of $G$. We assume that $\map{f}{[0, \infty) \times G^*}{\bbR^n}$ is a continuous (nonautonomous) vector field.

\begin{definition} \label{def:LyapunovFunction}
Let $\map{V}{[0, \infty) \times G^*}{\bbR}$ be a continuous, locally Lipschitz function. The function $V$ is said to be a \emph{Lyapunov function} of \eqref{eq:nonautonomousSystem} on $G$ if
\renewcommand{\theenumi}{\roman{enumi}}
\begin{enumerate}
\item given $x$ in $\bar G$ there is a neighborhood $N$ of $x$ such that $V(t,x)$ is bounded from below for all $t \geq 0$ and all $x$ in $N \cap G$.
\item $\dot V(t, x) \leq - W(x) \leq 0$ for all $ t \geq 0$ and all $x$ in $G$, where $W$ is continuous on $\bar G$. For $t$ where $V(t, x(t))$ is not differentiable, $\dot V$ is defined using the right-hand limit.
\end{enumerate}
If $V$ is a Lyapunov function for \eqref{eq:nonautonomousSystem} on $G$, we define
\[ E = \{ x ; W(x) = 0, x \in \bar G \} \text{ and } E_{\infty} = E \cup \{ \infty \}. \]
\end{definition}

The statement of Theorem \ref{thm:lasalle}, below, requires the notion of absolute continuity, which is defined as follows.
\begin{definition}[{\cite[Section 6.4]{HLR-PMF:10}}] \label{def:absoluteContinuity}
A real-valued function $f$ on a closed, bounded interval $[a,b]$ is said to be \emph{absolutely continuous} on $[a, b]$ provided for each $\epsilon > 0$, there is a $\delta > 0$ such that for every finite disjoint collection $\{ (a_k, b_k) \}_{k=1}^n$ of open intervals in $(a,b)$,
\[ \text{if } \sum_{k=1}^n [b_k - a_k] < \delta, \text{ then } \sum_{k=1}^n | f(b_k) - f(a_k) | < \epsilon. \]
\end{definition}
Lipschitz continuity implies absolute continuity, as follows:
\begin{proposition}[{\cite[Section 6.4, Proposition 7]{HLR-PMF:10}}] \label{prop:Lipschitz}
If the function $f$ is Lipschitz on a closed, bounded interval $[a,b]$, then it is absolutely continuous on $[a,b]$.
\end{proposition}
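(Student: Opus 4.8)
The plan is to read the absolute-continuity estimate straight off the Lipschitz bound. Let $L \ge 0$ be a Lipschitz constant for $f$ on $[a,b]$, so that $|f(x) - f(y)| \le L\,|x - y|$ for all $x, y \in [a,b]$. Given $\epsilon > 0$, I would take $\delta := \epsilon/(L+1)$; inflating the denominator by $1$ is the only bit of bookkeeping needed, as it lets the single choice of $\delta$ cover both the degenerate case $L = 0$ (where $f$ is constant and the conclusion is trivial) and the generic case $L > 0$ at once.

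Next, fix any finite disjoint collection $\{(a_k, b_k)\}_{k=1}^n$ of open subintervals of $(a,b)$ with $\sum_{k=1}^n (b_k - a_k) < \delta$. Applying the Lipschitz inequality to each endpoint pair $(a_k, b_k)$ and summing yields
\[
\sum_{k=1}^n |f(b_k) - f(a_k)| \;\le\; L \sum_{k=1}^n (b_k - a_k) \;\le\; L\delta \;=\; \frac{L\epsilon}{L+1} \;<\; \epsilon,
\]
which is precisely the condition appearing in Definition \ref{def:absoluteContinuity}. Since $\delta$ was chosen depending only on $\epsilon$ and $L$, and in particular independent of $n$ and of the particular intervals, this establishes absolute continuity of $f$ on $[a,b]$.

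There is no genuine obstacle here: the proposition is an immediate corollary of the defining inequality for Lipschitz functions, and the argument is three lines once the constant $\delta$ is written down. The only subtlety worth flagging is the uniform treatment of $L = 0$ versus $L > 0$, handled by the harmless substitution $L \rightsquigarrow L+1$ in the choice of $\delta$.
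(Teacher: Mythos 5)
Your proof is correct and is the standard textbook argument. Note that the paper does not give its own proof of this proposition: it simply cites it as Proposition 7 of Section 6.4 in Royden and Fitzpatrick, so there is no in-paper proof to compare against. Your argument --- choose $\delta = \epsilon/(L+1)$, sum the Lipschitz bound over the disjoint intervals --- is exactly the proof one finds in that reference, and the $L+1$ in the denominator neatly handles the $L=0$ edge case without a separate branch.
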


\begin{theorem}[{\cite[Theorem 1]{JPL:68}}] \label{thm:lasalle}
Let $V$ be a Lyapunov function for \eqref{eq:nonautonomousSystem} on $G$, and let $x(t)$ be a solution of \eqref{eq:nonautonomousSystem} that remains in $G$ for $t \geq t_0 \geq 0$ with $[t_0, \omega)$ the maximal future interval of definition of $x(t)$.
\renewcommand{\theenumi}{\alph{enumi}}
\begin{enumerate}
\item If for each $p \in \bar G$ there is a neighborhood $N$ of $p$ such that $| f(t,x) |$ is bounded for all $t \geq 0$ and all $x$ in $N \cap G$, then either $x(t) \to \infty$ as $t \to \omega^-$, or $\omega = \infty$ and $x(t) \to E_{\infty}$ as $t \to \infty$.
\item If $W(x(t))$ is absolutely continuous and its derivative is bounded from above (or from below) almost everywhere on $[t_0, \omega)$ and if $\omega = \infty$, then $x(t) \to E_{\infty}$ as $t \to \infty$.
\end{enumerate}
\end{theorem}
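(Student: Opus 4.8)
The plan is to run everything off one elementary observation: along any solution $x(\cdot)$ that stays in $G$, the scalar function $t\mapsto V(t,x(t))$ is nonincreasing. Indeed $V(t,x(t))$ is absolutely continuous (as $V$ is locally Lipschitz and $x(\cdot)$ is $C^1$), property (ii) of Definition \ref{def:LyapunovFunction} gives $\dot V(t,x(t))\le -W(x(t))\le 0$ wherever the derivative exists, the right-hand-derivative convention covers the exceptional times, and a continuous function with nonpositive (right) derivative is nonincreasing. I would carry out the argument inside the one-point compactification $\bbR^n\cup\{\infty\}$, so that ``$x(t)\to E_\infty$'' is exactly convergence of the trajectory to its (nonempty, compact) positive limit set $\Omega^+$ once we know $\Omega^+\subseteq E_\infty$; in that compactification every forward orbit approaches $\Omega^+$ automatically, so it suffices to show that every \emph{finite} limit point of $x(\cdot)$ lies in $E$. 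If $x(t)\to\infty$ as $t\to\omega^-$ there is nothing to prove, since $\infty\in E_\infty$; so assume there is a finite limit point $p$, i.e.\ $x(t_k)\to p\in\bar G$ along some $t_k\to\omega^-$. Then property (i) of Definition \ref{def:LyapunovFunction} makes $V$ bounded below near $p$, so the nonincreasing function $V(t,x(t))$ is bounded below and converges to a finite constant; integrating (ii) yields $\int_{t_0}^{\omega}W(x(s))\,\dd s<\infty$, with $W\ge 0$ along the orbit.

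For part (a), local boundedness of $f$ does two jobs. First, an \emph{escape argument} gives the stated dichotomy: if $\omega<\infty$ and $x(\cdot)$ had a finite limit point $p\in\bar G\subseteq G^*$, then boundedness of $f$ on a ball about $p$ makes $x$ Lipschitz there (Proposition \ref{prop:Lipschitz}), so $x$ stays in that ball for all $t$ near $\omega$, extends continuously to $t=\omega$ with $x(\omega)\in G^*$, and the solution then continues past $\omega$, contradicting maximality of $[t_0,\omega)$; hence $\omega<\infty$ forces $x(t)\to\infty$. Second, suppose $\omega=\infty$ and $p$ is a finite limit point with $W(p)>0$. Choose a neighbourhood $N$ of $p$ on which $W$ exceeds some $\delta>0$ and $|f(t,\cdot)|\le M$; since $x(t_k)\to p$ and $|\dot x|\le M$ while $x\in N$, the orbit needs at least a fixed dwell time $\tau>0$ (depending only on $M$ and the radius of $N$) to leave $N$, so $W(x(s))>\delta$ on an interval of length $\tau$ after each sufficiently large $t_k$. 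Taking the $t_k$ sparsely produces infinitely many pairwise-disjoint such intervals, whence $\int^\infty W(x(s))\,\dd s=\infty$, contradicting the integrability above. Therefore $W(p)=0$, so $\Omega^+\subseteq E_\infty$ and $x(t)\to E_\infty$.

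For part (b) the dwell-time estimate is replaced by a Barbalat-type lemma applied to $g(t):=W(x(t))$, now legitimate since $\omega=\infty$: $g\ge 0$ is integrable on $[t_0,\infty)$, absolutely continuous, and has $\dot g$ bounded above (or below) almost everywhere. If $g(t)\not\to 0$ there are $\epsilon>0$ and $t_k\to\infty$ with $g(t_k)\ge\epsilon$; one-sided control of $\dot g$ then keeps $g\ge\epsilon/2$ on an interval of fixed length immediately after (respectively before) each $t_k$, and disjointness again forces $\int^\infty g=\infty$, a contradiction; hence $W(x(t))\to 0$. Consequently every finite limit point $p$ satisfies $W(p)=\lim_k W(x(t_k))=0$, so $p\in E$, whence $\Omega^+\subseteq E_\infty$ and $x(t)\to E_\infty$.

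The conceptual obstacle --- and the reason a hypothesis beyond (ii) is unavoidable --- is that for a genuinely nonautonomous vector field the positive limit set need not be invariant, so the classical autonomous LaSalle chain ``$\Omega^+$ invariant, hence contained in $\{W=0\}$'' is unavailable; the real work is to force $W(x(t))\to 0$, equivalently to control the finite limit points of $x(\cdot)$, \emph{without} invariance, which is precisely what the dwell-time/escape pair in (a), or the absolute continuity plus one-sided derivative bound in (b), achieve. The rest is bookkeeping: separating $\omega<\infty$ from $\omega=\infty$, noting that $V(t,x(t))$ is bounded below exactly because a finite limit point exists (the alternative $x(t)\to\infty$ being trivially consistent with the conclusion), and keeping the compactification honest so that ``$x(t)\to E_\infty$'' is the correct convergence statement.
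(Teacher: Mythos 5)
The paper does not prove this theorem; it is quoted verbatim as \cite[Theorem 1]{JPL:68} and invoked as a black box, so there is no in-paper argument to compare against. Judged on its own merits, your reconstruction is sound and follows LaSalle's strategy: monotone decrease of $V(t,x(t))$ together with property (i) at a finite limit point gives $\int_{t_0}^\omega W(x(s))\,\dd s<\infty$; the escape-time estimate from local boundedness of $f$ handles the $\omega<\infty$ case and supplies the dwell-time bound when $\omega=\infty$; and the Barbalat-style one-sided derivative bound replaces the dwell-time estimate in part (b). Passing to the one-point compactification so that ``$x(t)\to E_\infty$'' becomes ``$\Omega^+\subseteq E_\infty$'' is the right framing, and your closing remark correctly identifies why invariance of $\Omega^+$ is unavailable in the nonautonomous setting and why the extra hypothesis in (a) or (b) is essential. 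One small slip: with $\dot g\le M$ a.e.\ the interval on which $g\ge\epsilon/2$ sits immediately \emph{before} each $t_k$ (since $g(t)\ge g(t_k)-M(t_k-t)$ for $t<t_k$), and with $\dot g\ge -M$ it sits immediately \emph{after}; you have these reversed, but the disjoint-intervals divergence argument is unaffected.
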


We now show that $x_2$ converges. For clarity of exposition, we write the argument as a series of lemmas.

\begin{lemma} \label{lem:Ginvariant}
Let $\varepsilon > 0$ and $\mfd = \domain \times \Delta^2 \times \bbR \times \bbR_+$, and let the set $G$ be the subset of $\mfd$ defined by
\beq \label{eq:defG} 
G := \{ z \in \mfd | \vbar \geq 1/2, \mbar > \varepsilon \}.
\eeq
The set $G$ is positive invariant under the dynamics $\dot z = f_z(z)$ defined by \eqref{eq:mean-differenceDynamics}.
\end{lemma}
\begin{proof}
Let $z = (x_1, x_2, \delm, \mbar, \delv, \vbar)$ be coordinates for $\mathcal{M} = \domain \times \Delta^2 \times \bbR \times \bbR_+$ and consider the dynamics $\dot z = f_z(z)$ defined by \eqref{eq:mean-differenceDynamics}.

Recall that $\varphi_1(x) = \| x - x_1^* \|^2/2, \varphi_2(x) = \| x - x_2^* \|^2/2$ with $x_i^* = [\pm 1, 0]^T \in \domain$. Direct calculation shows that $\pbar(x) = (\varphi_1(x) + \varphi_2(x))/2 = (1 + \| x \|^2)/2 \geq 1/2$.

Furthermore, recall from \eqref{eq:vBarDot} that $\dot \vbar = -\lambda (\vbar-\pbar)$, so $\dot \vbar(\vbar = 1/2) = -\lambda(1/2 - \pbar) \geq 0$ by the lower bound on $\pbar$. Therefore the set $\{ z \in \mfd | \vbar \geq 1/2 \}$ is positive invariant.

Similarly, note that $\mbar \geq 0$ by definition and that $-2\mbar \leq \delm \leq 2 \mbar$, so $\mbar = 0$ implies that $\delm = 0$. Therefore, from \eqref{eq:mBarDot}, $\dot \mbar (\mbar = 0) = \vbar/\epsilon_v$, so $\vbar \geq 1/2$ implies that $\dot \mbar(\mbar=0) \geq c/2\epsilon_v$ and therefore that $\mbar > 0$. Therefore, the continuity of the $\dot \mbar$ dynamics implies that for $\vbar > 1/2$, there exists an $\varepsilon > 0$ such that $\mbar < \varepsilon$ implies that $\dot \mbar(\mbar) > 0$. This implies that $G$ is a positive-invariant set.
\end{proof}

We now study the dynamics of $x_2$, which can be written as
\beq
\label{eq:x2Dott}
\dot{x}_2 = -2 \mbar(t) x_2.
\eeq

\begin{lemma} \label{lem:GLyapunovFn}
Let $G$ be the set defined by \eqref{eq:defG} in Lemma \ref{lem:Ginvariant}. Let $\varepsilon > 0$ and $W(z) = 2\varepsilon z_2^2 = 2 \varepsilon x_2^2$. The function $V(z) = \frac{1}{2}z_2^2 = \frac{1}{2} x_2^2$ is a Lyapunov function of \eqref{eq:x2Dott} on $G$.
\end{lemma}

\begin{proof}
Note that $V(z), W(z) \geq 0$, so they satisfy requirement i of Definition \ref{def:LyapunovFunction}. Computing $\dot V$ on $G$, we have 
\begin{align*}
\dot{V} &= \partial V/\partial x_2 \dot{x}_2 = -2 \mbar(t) x_2^2 \\
&\leq -2 \varepsilon x_2^2 = -W(z) \leq 0.
\end{align*}
The inequality derives from the definition of $G$ and shows that $V$ satisfies the second requirement of Definition \ref{def:LyapunovFunction}, thereby establishing the result.
\end{proof}

\begin{lemma} \label{lem:WabsCont}
The function $W(x_2(t))$ is absolutely continuous in $t$.
\end{lemma}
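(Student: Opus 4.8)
The plan is to upgrade the claim to a Lipschitz estimate on compact time intervals and then invoke Proposition~\ref{prop:Lipschitz}. The starting point is a uniform bound on the body velocity. Writing $\dot x = -m^\tran\Phi(x) = -\sum_{i=1}^N m_i\,\sat(\varphi_i(x),\eta)\,\nabla\varphi_i(x)$ and noting that $\sat(\varphi_i(x),\eta) = \varphi_i(x)/\sqrt{\varphi_i(x)^2+\eta^2}\in[0,1)$ while $\lvert\nabla\varphi_i(x)\rvert = 1$, each term has norm at most $m_i$; since $\sum_{i=1}^{N+1}m_i = 1$ on $\Delta^N$ and the indecision field contributes nothing, $\lvert\dot x\rvert\le\sum_{i=1}^N m_i\le 1$. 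Hence $x(\cdot)$ is $1$-Lipschitz in $t$, uniformly and independently of the (a priori unknown) evolution of $m$ and $v$.

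Next I would transfer this regularity to $\pbar$. Because $\varphi_i(x) = \lVert x-x_i^*\rVert_2$ is $1$-Lipschitz in $x$ by the reverse triangle inequality, the composition $t\mapsto\varphi_i(x(t))$ is $1$-Lipschitz, and therefore $\pbar(t) = \tfrac12\bigl(\varphi_1(x(t))+\varphi_2(x(t))\bigr)$ is $1$-Lipschitz in $t$. Combining the lower bound $\pbar(t)\ge 1/2$ from the proof of Lemma~\ref{lem:Ginvariant} with the upper bound $\pbar(t)\le\pbar(0)$ of Lemma~\ref{lem:pBartUpperBound}, the orbit $\pbar(t)$ stays in the fixed compact interval $I := [1/2,\pbar(0)]$ for all $t\ge 0$.

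Finally, $W(\pbar) = C\,(\pbar-1/2)^2$ with $C := \eta\varepsilon/\bigl(\pbar(0)(4\pbar(0)^2+\eta^2)\bigr)>0$ a constant, so $W$ is $C^1$ and hence Lipschitz on the compact interval $I$, say with constant $L$. Composing the two Lipschitz maps gives $\lvert W(\pbar(t))-W(\pbar(s))\rvert\le L\lvert\pbar(t)-\pbar(s)\rvert\le L\lvert t-s\rvert$, so $t\mapsto W(\pbar(t))$ is Lipschitz, hence by Proposition~\ref{prop:Lipschitz} absolutely continuous, on every closed bounded subinterval of $[t_0,\omega)$ --- which is exactly the hypothesis needed to invoke Theorem~\ref{thm:lasalle}(b). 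There is no genuine obstacle in this lemma; the only point requiring care is the uniform velocity bound $\lvert\dot x\rvert\le 1$, since it is precisely what makes all of the subsequent Lipschitz constants independent of how close the trajectory comes to the goal locations $x_1^*,x_2^*$ (where the $z$-coordinate representation of the vector field degenerates).
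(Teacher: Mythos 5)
Your proof is correct, but it takes a genuinely different and arguably cleaner route than the paper's. The paper also reduces to showing that $t\mapsto W(\pbar(t))$ is Lipschitz (and then invokes Proposition~\ref{prop:Lipschitz}), but it establishes the Lipschitz bound by directly estimating $|\dd W/\dd t|$ in the mean--difference coordinates $z$: it rewrites $\dd W/\dd t = -\beta(\pbar-1/2)n(t)\cdot(4\pbar^2-1)/(|\varphi_1|_\eta|\varphi_2|_\eta(4\pbar^2-\delp^2))$ and then bounds each factor, in particular using $(4\pbar^2-1)/(4\pbar^2-\delp^2)\le 1$ and $|\varphi_i|_\eta\ge\eta$. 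The latter produces a factor $1/\eta^2$ in the final estimate, which is a fairly coarse way of controlling the denominator singularity of the $z$-coordinate vector field near the goal points. You instead pull the regularity question back to the physical coordinate $x$, where the velocity bound $|\dot x|\le\sum_i m_i\le 1$ is immediate (the saturation factor and the unit gradient each have norm at most one, and $m$ lies in the simplex), then push forward through the $1$-Lipschitz maps $\varphi_i$ and the $C^1$ quadratic $W$ on the compact interval $[1/2,\pbar(0)]$. This sidesteps the coordinate degeneracy entirely, and as a bonus your Lipschitz constant $2C(\pbar(0)-1/2)$ with $C=\eta\varepsilon/(\pbar(0)(4\pbar(0)^2+\eta^2))$ is much sharper --- it goes to zero as $\eta\to 0$, whereas the paper's bound $2\varepsilon(\pbar(0)-1/2)/(\eta|2\pbar(0)|_\eta)$ blows up. The only caveat, which you already flag, is that the velocity bound is the load-bearing step; it is correct, and it is precisely what the paper's $|\varphi_i|_\eta\ge\eta$ estimate is compensating for.
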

\begin{proof}
Proposition \ref{prop:Lipschitz} shows that Lipschitz continuity implies absolute continuity. It is well known that a function is Lipschitz continuous if it has bounded first derivative, so we proceed by bounding the derivative $\dd W(x_2(t))/\dd t$.

The time derivative is
\begin{align*} \frac{\mathrm{d}W}{\mathrm{d}t} = \frac{\partial W}{\partial x_2} \dot x_2 &= (4 \varepsilon x_2) (-2 \mbar(t) x_2) \\
&= -8\varepsilon \mbar(t) x_2^2.
\end{align*}
This implies that 
\beq \label{eq:magnitudeBound1}
|\dd W/\dd t| = 8 \varepsilon \mbar(t) x_2^2 = 16 \varepsilon \mbar(t) V(x_2(t)),
\eeq
where $V$ is the Lyapunov function defined in Lemma \ref{lem:GLyapunovFn}. Then $|\dd W/\dd t|$ is clearly lower bounded by zero and upper bounded by $8 \varepsilon V(x_2(0))$ since $V$ is a Lyapunov function for \eqref{eq:x2Dott}.
\end{proof}

Lemmas \ref{lem:Ginvariant}--\ref{lem:WabsCont} imply that $x_2 \to 0$. Furthermore, $x_1$ is attracted to the interval $[-1,1]$, as formalized in the following lemma.
\begin{lemma} \label{lem:x1Attracting}
Let $x_1(t) = z_1(t)$ be the first component of a solution of \eqref{eq:mean-differenceDynamics} with initial condition $z(0) \in G$, where $G$ is as defined in \eqref{eq:defG}. Then, $x_1(t) \to [-1, 1]$ as $t \to \infty$.
\end{lemma}

\begin{proof}
Let $\varepsilon > 0$ and note that $G$ is positive-invariant under \eqref{eq:mean-differenceDynamics}. The dynamics of $x_1$ are $\dot x_1 = \delm - 2 \mbar x_1$. From the definitions of $\delm$ and $\mbar$ as coordinates for the simplex $\Delta^2$ it is easily shown that $-2\mbar \leq \delm \leq 2 \mbar$. Then, on $G$, $\dot x_1$ can be lower bounded as
\begin{align*}
\dot x_1 &= \delm - 2\mbar x_1 \\
& \geq -2\mbar - 2\mbar x_1 \\
&= -2 \mbar (1+x_1) \\
& \geq -2 \varepsilon (1 + x_1)
\end{align*}
which is strictly positive for $x_1 < -1$. Similarly, $\dot x_1$ can be upper bounded such that $\dot x_1 > 0$ if $x_1 > 1$. Therefore, $x_1(t) \to [-1,1]$ as $t \to \infty$.
\end{proof}

Putting the preceding pieces together, we find that there is an attracting invariant submanifold of dimension five, as formalized in the following lemma.

\begin{lemma} \label{lem:asymptoticConvergence}
The variables $x_1$ and $x_2$ asymptotically converge to the interval $[-1, 1]$ and the point 0, respectively. Therefore, the five-dimensional submanifold $\mfd_5$ defined by $\mfd_5 = \{ z = (x_1, x_2, \delm, \mbar, \delv, \vbar) \in \mfd | x_1 \in [-1,1], x_2 = 0, \mbar > 0, \vbar \geq 1/2 \}$ is an attracting invariant submanifold under the (autonomous) dynamics \eqref{eq:mean-differenceDynamics}.
\end{lemma}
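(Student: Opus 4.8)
The plan is to feed Lemmas \ref{lem:Ginvariant}--\ref{lem:WabsCont} into the nonautonomous invariance principle, Theorem \ref{thm:lasalle}(b), applied to the scalar equation $\dot\pbar = f_\pbar(t,\pbar)$ obtained by freezing a trajectory of the full system \eqref{eq:mean-differenceDynamics} and regarding the remaining coordinates $(\delp,\delm,\mbar,\delv,\vbar)(t)$ as exogenous time dependence. First I would fix an initial condition in $G$. By Lemma \ref{lem:Ginvariant} the trajectory then remains in $G$ for all forward time, so $\mbar(t) > \varepsilon$ throughout, which is precisely what the bounds of Lemmas \ref{lem:alphaLowerBound}--\ref{lem:WabsCont} require; moreover $\pbar(t) \in [1/2,\pbar(0)]$ (Lemma \ref{lem:pBartUpperBound}), $\delp \in [-1,1]$, $m \in \Delta^2$, and the $v$-coordinates stay bounded, so the maximal forward interval of existence is $[0,\infty)$. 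By Lemma \ref{lem:lyapunovFunction}, $V = \tfrac12(\pbar - 1/2)^2$ is a Lyapunov function of $\dot\pbar = f_\pbar(t,\pbar)$ with $\dot V \le -W(\pbar) \le 0$, and $W$ vanishes only at $\pbar = 1/2$, so $E = \{1/2\}$. By Lemma \ref{lem:WabsCont}, $W(\pbar(t))$ is absolutely continuous with uniformly bounded time derivative. These are exactly the hypotheses of Theorem \ref{thm:lasalle}(b), which yields $\pbar(t) \to E_\infty = \{1/2\} \cup \{\infty\}$; boundedness of $\pbar$ excludes the point at infinity, so $\pbar(t) \to 1/2$.

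With $\pbar(t) \to 1/2$ in hand, Lemma \ref{lem:vbartConverges} gives $\vbar(t) \to 1/2$ at once. It then remains to verify that $\mfd_4 = \{z \in \mfd : \pbar = \vbar = 1/2\}$ is invariant: on $\mfd_4$ the factor $4\pbar^2 - 1$ appearing in \eqref{eq:barPhiDot} vanishes, so $f_\pbar \equiv 0$ there, and \eqref{eq:vBarDot} gives $f_\vbar = -(\vbar - \pbar) = 0$; hence the vector field is tangent to $\mfd_4$, so $\mfd_4$ is invariant. Combined with the convergence just established, every trajectory considered approaches $\mfd_4$, so $\mfd_4$ is an attracting invariant submanifold. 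For an initial condition not in $G$, note that $\pbar \ge 1/2$ and $\dot\pbar \le 0$ hold globally (Lemmas \ref{lem:Ginvariant} and \ref{lem:pBartUpperBound}), so $\pbar$ converges to some $\pbar_\infty \ge 1/2$; if $\pbar_\infty > 1/2$, then $\dot\vbar = \lambda(\pbar - \vbar)$ eventually drives $\vbar$ above $1/2$, after which Lemma \ref{lem:Ginvariant} keeps $\mbar$ above $\varepsilon$, placing the trajectory in $G$ and forcing $\pbar \to 1/2$ by the argument above, a contradiction; hence $\pbar_\infty = 1/2$ in every case.

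The main obstacle is making the nonautonomous reduction airtight rather than any single computation: one must confirm that $f_\pbar(t,\cdot)$ is a bona fide continuous vector field on a neighborhood of $\bar G$ with locally bounded magnitude despite the factor $(4\pbar^2 - \delp^2)^2 = 16\varphi_1^2\varphi_2^2$ in the denominator of \eqref{eq:barPhiDot} --- which is rescued by compensating factors of $\varphi_i$ carried through the saturation terms in the numerator together with the vanishing of $4\pbar^2 - 1 = (\varphi_1 + \varphi_2 - 1)(\varphi_1 + \varphi_2 + 1)$ as $x$ approaches either goal --- and that the absolute continuity and one-sided boundedness of $\dd W/\dd t$ demanded by Theorem \ref{thm:lasalle}(b) hold uniformly along the trajectory, which is exactly what Lemma \ref{lem:WabsCont} provides. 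Everything else --- identifying $E$, ruling out escape to infinity, and the tangency check for invariance of $\mfd_4$ --- is routine bookkeeping.
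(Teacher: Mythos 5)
Your proof is correct and takes the same route as the paper: feed the Lyapunov function of Lemma \ref{lem:lyapunovFunction} and the absolute continuity established in Lemma \ref{lem:WabsCont} into part (b) of the nonautonomous invariance principle (Theorem \ref{thm:lasalle}) to get $\pbar \to 1/2$, then pass the convergence through the ISS argument of Lemma \ref{lem:vbartConverges}. You additionally spell out some bookkeeping that the paper's proof leaves implicit --- identifying $E = \{1/2\}$ and ruling out the point at infinity, the tangency check that makes $\mfd_4$ invariant, and the behavior from initial conditions outside $G$ --- which tightens the argument rather than changing its structure.
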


\begin{proof}
Lemmas \ref{lem:GLyapunovFn} and \ref{lem:WabsCont} show that $V$ is a Lyapunov function for the dynamical system $\dot x_2 = f_{x_2}(x_2)$ whose total derivative $\dot V$ is upper bounded by $-W(x_2)$. Therefore the conditions of Theorem \ref{thm:lasalle} hold.

Applying Theorem \ref{thm:lasalle}, we find that $x_2$ converges to the unique root of $W(x_2)$, which is located at $x_2 = 0$. Applying Lemma \ref{lem:x1Attracting}, we conclude that $x_1 \to [-1, 1]$. Therefore, $\mfd_5$ is an attracting subset of the set $G$ defined in \eqref{eq:defG}.
\end{proof}

\subsection{A two-dimensional slow manifold in the limit $\epsilon_{\lambda} \to 0, \epsilon_{v} \to 0$}
We now proceed to eliminate $\delv$ and $\mbar$ by taking the singular perturbation limit $\epsilon_{\lambda} \to 0, \epsilon_{v} \to 0$. The first limit couples $\delv$ to $\delp(x)$, as in Section \ref{sec:Hopf}, while the second limit pushes $\mbar$ to a slow manifold $\mbar(\delp(x), \delm)$. This results in a planar ``singular'' system.

We study the dynamics \eqref{eq:mean-differenceDynamics} restricted to $\mfd_5$ defined in Lemma  \ref{lem:asymptoticConvergence} by singular perturbation in the small parameter $\epsilon_v$. Further, let $\ell > 0$ be a positive number such that $\epsilon_{\lambda} = 1/\lambda = \ell \epsilon_v$. For a given value of $\ell$, this links $\epsilon_\lambda$ to $\epsilon_v$. Defining\footnote{This definition, which we make for consistency with the literature on singular perturbation problems, overloads the definition of $x \in \domain$ from Section \ref{sec:model}.} $x = (x_1, \delm) \in \mathcal{X} := [-1,1]^2$ and $y = ((1-2 \mbar)/\epsilon_v, \delv, \vbar) \in \bbR_+ \times \bbR \times \bbR_+$, the restricted dynamics can be written in the standard form for a singular perturbation problem:
\begin{align}
\dot{x} &= f_x(x,y,\epsilon_v) \label{eq:slowDynamics} \\
\epsilon_v \dot{y} &= g_y(x, y, \epsilon_v), \label{eq:fastDynamics}
\end{align}
where the slow dynamics is given by 
\begin{align*}
\dot{x}_1 = f_{x,1}(x, y, \epsilon_v) &= x_2 - (1-\epsilon_v y_1) x_1 \\
\dot{x}_2 = f_{x,2}(x, y, \epsilon_v) &= -\epsilon_v \left( \frac{1- \epsilon_v y_1 + x_2}{2y_3 + y_2} - \frac{1- \epsilon_v y_1 - x_2}{2 y_3 - y_2} \right) \\
& \ \ \ \  + x_2 y_1 y_3 + y_1 y_2 \left( \frac{3-\epsilon_v y_1}{2} \right),
\end{align*}
and the fast dynamics is given by
\begin{align*}
\epsilon_v \dot{y}_1 = g_{y,1}(x, y, \epsilon_v) =&\ \epsilon_v \left( \frac{1- \epsilon_v y_1 + x_2}{2y_3 + y_2} + \frac{1- \epsilon_v y_1 - x_2}{2 y_3 - y_2} \right) \\
& \ - \frac{y_1}{2} \left( (2 y_3 + y_2) \left( 1 + \frac{1 - \epsilon_v y_1 + x_2}{2} \right) \right. \\
& \left. \ \ \ \ \ \ \ \ + (2 y_3 - y_2) \left( 1 + \frac{1 - \epsilon_v y_1 - x_2}{2} \right) \right) \\
& +\ \frac{\sigma}{2} ((1 - \epsilon_v y_1)^2 - x_2^2) \\
\epsilon_v \dot{y}_2 = g_{y,2}(x, y, \epsilon_v) =& - \frac{1}{\ell}(y_2 - \delp) = -\frac{1}{\ell}(y_2 + 2 x_1)\\
\epsilon_v \dot y_3 = g_{y,3}(x, y, \epsilon_v) =& -\frac{1}{\ell}(y_3 - \pbar) = - \frac{1}{\ell} \left( y_3 - \frac{1 + x_1^2}{2} \right).
\end{align*}
It can be easily verified that the vector field $f_x$ points inward on the boundary of $\mathcal{X} = [-1, 1]^2$, so $\mathcal{X}$ is positive invariant.

The slow manifold $\mfd_s$ is given by the implicit function solution $y = h_y(x)$ of the limiting fast dynamics $g(x, h_y(x), 0) = 0$:
\begin{align}
y_1 &= h_{y,1}(x) := \frac{\sigma (1- x_2^2)}{3 (1+ x_1^2) - 2 x_1 x_2} \label{eq:y1Definition} \\
y_2 &= h_{y,2}(x) := -2 x_1 \nonumber \\
y_3 &= h_{y,3}(x) := \frac{1 + x_1^2}{2}.
\end{align}
Figure \ref{fig:slowManifoldError} compares the analytically-computed slow manifold $h_{y,1}(x(t))$ to the value of $y_1(t)$ computed based on a numerical simulation of the full six-dimensional system \eqref{eq:mean-differenceDynamics}. The small value of the error between the two values shows that the low-dimensional slow manifold is a good approximation to the trajectory of the high-dimensional system. Figure \ref{fig:slowManifoldSurface} shows the analytically-computed slow manifold surface $h_{y,1}(x)$ along with the trajectories $h_{y,1}(x(t))$ and $y_1(t)$.

\begin{figure}[htbp]
   \centering
   \includegraphics[width=3.5in]{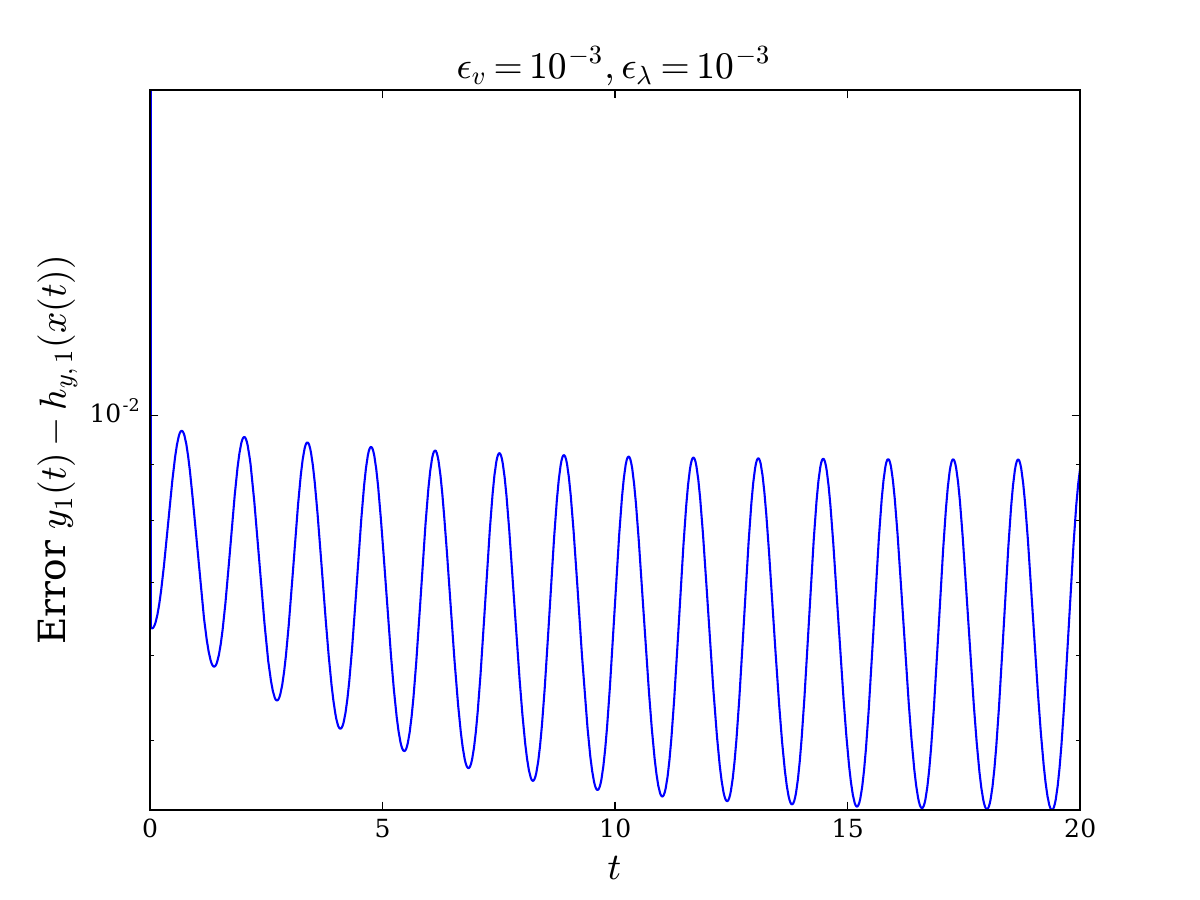} 
   \caption{Error in the slow manifold approximation \eqref{eq:y1Definition} computed for $\epsilon_v = 10^{-3}, \epsilon_{\lambda} = 10^{-3}$, plotted on a logarithmic scale. The blue trace shows the error $y_1(t) - h_{y,1}(x(t))$ for $y_1(t) = (1-2\mbar(t))/\epsilon_v$ computed from a simulated trajectory of the full six-dimensional system \eqref{eq:mean-differenceDynamics} and $h_{y,1}(x(t))$, the analytical expression for the slow manifold evaluated along the same trajectory. The small magnitude of the error shows that the analytical slow manifold is a good approximation to the full system.}
   \label{fig:slowManifoldError}
\end{figure}

\begin{figure}[htbp]
   \centering
   \includegraphics[width=3.5in]{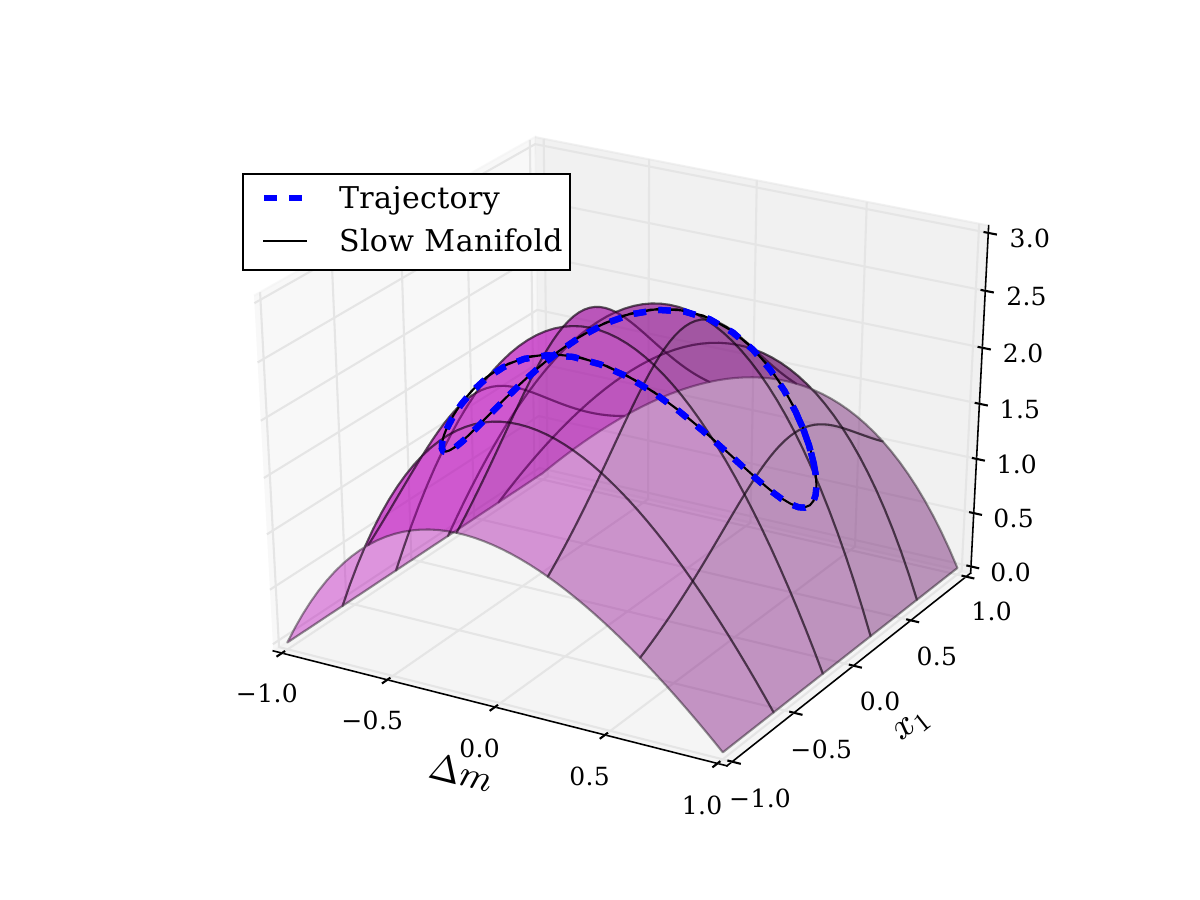} 
   \caption{Orbits on the slow manifold $\mathcal{M}_s$ defined by \eqref{eq:y1Definition} computed for $\epsilon_v = 10^{-3}, \epsilon_{\lambda} = 10^{-6}$. The blue trace shows $y_1(t) = (1-2\mbar(t))/\epsilon_v$ computed from a simulated trajectory of the full six-dimensional system \eqref{eq:mean-differenceDynamics}, while the magenta trace shows $y_1(t) = h_1(x(t))$, the analytical expression for the slow manifold evaluated along the same trajectory. The shaded surface shows the slow manifold surface $h_1(x)$. The close correspondence between the two traces shows that the analytical slow manifold is a good approximation to the full system.}
   \label{fig:slowManifoldSurface}
\end{figure}

The planar reduced dynamics on the slow manifold are given by the restriction of \eqref{eq:slowDynamics} to the slow manifold $\mfd_s$, now expressed in the coordinates of $\mathcal{X} = [-1,1]^2$,
\beq \label{eq:planarDynamics}
\dot x = f_x(x, h_y(x), 0),
\eeq
where the components are
\begin{align*}
\dot x_1 &= f_{x,1}(x, h_y(x), 0) = x_2 - x_1\\
\dot x_2 &= f_{x,2}(x, h_y(x), 0) = \frac{\sigma}{2}\frac{(1-x_2^2) (x_2 - 6 x_1 + x_1^2 x_2)}{3 x_1^2 - 2 x_1 +3}.
\end{align*}

As seen in Appendix \ref{app:fenichel}, the slow manifold $\mfd_s$ is hyperbolic if the initial layer equation, \eqref{eq:layerEquation}, $\dot{\delta y} = D_y g_y(x, 0, 0) \delta y$ (where $\delta y = y - h(x)$) has a hyperbolic equilibrium at the origin. Away from the boundary of the set $\mathcal{X} = [-1,1]^2$, the eigenvalues of the linearization $\left. \frac{\partial g_y}{\partial y} \right|_{(x,0,0)}$ are equal to $-1/\ell$ (with multiplicity two) and $-(3 - 2 x_1 x_2 + 3 x_1^2)/2$. By definition $\ell> 0$, so all three eigenvalues are strictly negative for $x$ in the interior of $\mathcal{X}$. Therefore they never intersect the imaginary axis and the slow manifold $\mfd_s$ is hyperbolic on the interior of $\mathcal{X}$. For points $x$ on the boundary of $\mathcal{X}$, the eigenvalues are not well defined, so we cannot conclude that $\mfd_s$ is hyperbolic at those points.

\subsection{Existence of a stable limit cycle in the planar system}
Now, we study the planar dynamics \eqref{eq:planarDynamics} and show by a straightforward application of the Poincar\'e-Bendixson Theorem \cite[Theorem 1.8.1]{JG-PJH:13} that they exhibit a  periodic orbit --- a limit cycle --- attracting an open annular neighborhood of the origin. We conjecture (and all numerical evidence corroborates) that this is an asymptotically stable limit cycle comprising the forward limit set of the entire origin-punctured state space. For present purposes it suffices to observe formally that an open neighborhood of initial conditions around the origin must take their forward limit set on this limit cycle. 

\begin{lemma}\label{lem:stableOrbit}
For $\sigma > 6,$ there exists a periodic orbit of the reduced system \eqref{eq:planarDynamics} comprising the forward limit set of an open annular neighborhood excluding the (unstable) origin.
\end{lemma}
\begin{proof}
Note that the set $\mathcal{X} = [-1, 1]^2$ is invariant set under the reduced dynamics $\dot x = f_x(x, h_y(x), 0)$. Furthermore, note that the only equilibria in $\mathcal{X}$ are the origin and the two corners $(x_1, x_2) = (1, 1), (-1, -1)$. It is easy to see that the interior of $\mathcal{X}$ is an invariant set; the edges $x_2 = 0$ and $x_2 = 1$ are part of the stable manifolds of the equilibria at the corners and the vector field is directed inwards on the other two edges. Let $\mathring{\mathcal{X}}$ be the interior of $\mathcal{X}$. Then $\mathring{\mathcal{X}}$ is pre-compact, connected, and contains a single fixed point at the origin.

The linearization of the reduced dynamics at the origin is given by
\[ J_{r,0} = \begin{bmatrix} -1 & 1 \\ -\sigma & \sigma/6 \end{bmatrix}. \]
The determinant $\det J_{r,0} = 5\sigma/6 > 0$ for all $\sigma > 0$ and the trace $\tr J_{r,0} = \sigma/6 - 1 > 0$ for all $\sigma > 6$. The trace and determinant are, respectively, the sum and product of the two eigenvalues, so the fact that they are both positive implies that the eigenvalues are themselves positive. Therefore, for $\sigma > 6$, the origin is an unstable focus.

Since the entire annular region, $\mathring{\mathcal{X}} \setminus \{0 \}$, is a pre-compact, positive invariant set  possessing no fixed points it follows from the Poincar\'e-Bendixson Theorem that its forward limit set consists of proper (non-zero period) periodic orbits. In particular, in the neighborhood of the excluded repeller at the origin, there must exist an isolated periodic orbit comprising the forward limit set for that entire (punctured) neighborhood.\footnote{Although some texts, e.g., \cite[Ch. 7.0]{SS:94}, reserve the term ``limit cycle'' for isolated periodic orbits, the preponderant usage seems to favor that introduced in, e.g., \cite[Ch. 10.6]{MH-SS-RD:04}, \cite[Ch. VII.1]{PH:64}, \cite[Ch. V.9]{CR:99}, requiring merely that a periodic orbit comprise the limit set for some disjoint set of initial conditions, as we show here. Conjecture \ref{conj:Floquet}, below, will advance numerical evidence to support our assumption hereafter that this periodic orbit is hyperbolic, in which case it must be isolated \cite[Ch. I.4.4]{CCC:78}, and hence qualifies unreservedly for the usage ``limit cycle.''}
\end{proof}

\section{Persistence of the limit cycle for finite $\epsilon_v, \epsilon_{\lambda}$}\label{sec:persistence}
We now give conditions (which were previously stated as Theorem \ref{thm:mainResult} in Section \ref{sec:formalResults}) under which the limit cycle whose existence was proven in the limit $\epsilon_{\lambda} \to 0, \epsilon_v \to 0$ in Lemma \ref{lem:stableOrbit} persists for finite values of $\epsilon_{\lambda}, \epsilon_v$. The result depends upon the conjectured hyperbolicity of that cycle, for which we establish numerical evidence below.

\vspace{1mm}
\textsc{Theorem} \ref{thm:mainResult}. \textit{
Accepting Conjecture \ref{conj:Floquet}, below, for $\sigma>6$, there exists a stable limit cycle of \eqref{eq:mean-differenceDynamics} for sufficiently small, but finite, values of $\epsilon_{\lambda}$ and $\epsilon_v$. Equivalently, fixing $\lambda$, there exists a stable limit cycle of \eqref{eq:mean-differenceDynamics} for sufficiently large, but finite, values of $v^*$.}

Lemma \ref{lem:stableOrbit} shows that the singularly perturbed system with $\epsilon \to 0$, i.e., $\epsilon_v, \epsilon_{\lambda} \to 0$ exhibits a limit cycle $\gamma_0$. The following result due to Fenichel \cite{NF:79} then allows us to show that this limit cycle persists for sufficiently small $\epsilon_{v}, \epsilon_{\lambda} > 0$. 

Two pieces of notation are required to state the result. The symbol $\eqMfd_H$ represents the open set on which the linearization of the dynamics normal to the slow manifold has hyperbolic fixed points. In $\eqMfd_H$ the reduced vector field $X_R$ is defined by
\[
X_R(m) = \pi^{\eqMfd} \partial/\partial \epsilon X^{\epsilon}(m)|_{\epsilon=0},
\]
where $\pi^{\eqMfd}$ is the projection onto $\eqMfd$ defined in Equation \eqref{eq:reducedVectorField} of Appendix \ref{app:fenichel}. We can now state the result.

\begin{theorem}[{\cite[Theorem 13.1]{NF:79}}] \label{thm:Fenichel}
Let $M$ be a $C^{r+1}$ manifold, $2 \leq r \leq \infty$. Let $X^{\epsilon}, \epsilon \in (-\epsilon_0, \epsilon_0)$ be a $C^r$ family of vector fields, and let $\eqMfd$ be a $C^r$ submanifold of $M$ consisting entirely of equilibrium points of $X^0$. Let $\gamma \in \eqMfd_H$ be a periodic orbit of the reduced vector field $X_R$, and suppose that $\gamma_0$, as a periodic orbit of $X_R$, has 1 as a Floquet multiplier of multiplicity precisely one. Then there exists $\epsilon_1 > 0$ and there exists a $C^{r-1}$ family of closed curves $\gamma_{\epsilon}, \epsilon \in (-\epsilon_1, \epsilon_1)$, such that $\gamma_0 = \gamma$ and $\gamma_{\epsilon}$ is a periodic orbit of $\epsilon^{-1} X^{\epsilon}$. The period of $\gamma_{\epsilon}$ is a $C^{r-1}$ function of $\epsilon$.
\end{theorem}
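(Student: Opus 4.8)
The plan is to assemble the chain of reductions built up in Section~\ref{sec:reduction}, invoke Fenichel's persistence result (Theorem~\ref{thm:Fenichel}) together with Conjecture~\ref{conj:Floquet}, and then account for the full Floquet spectrum of the persisting orbit to obtain asymptotic stability. Throughout I would fix $\ell>0$ and set $\epsilon_\lambda = \ell\,\epsilon_v$, treating $\epsilon_v$ as the single singular parameter; the conclusion along this curve is then restated in the original variables via $v^\ast = 1/\epsilon_v$ and $\lambda = 1/\epsilon_\lambda$.

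First I would record the three nested reductions. By Lemma~\ref{lem:asymptoticConvergence}, $\mfd_4 = \{z\in\mfd \mid \pbar=\vbar=1/2\}$ is an attracting invariant submanifold, so it suffices to exhibit a stable limit cycle of the restriction of \eqref{eq:mean-differenceDynamics} to $\mfd_4$; that restriction is precisely the singularly perturbed system \eqref{eq:slowDynamics}--\eqref{eq:fastDynamics} in the coordinates $x = (\delp,\delm)\in\mathcal{X} = [-1,1]^2$ and $y = ((1-2\mbar)/\epsilon_v, \delv)$. The critical manifold of the layer problem $g_y(x,y,0)=0$ is the slow manifold $\mfd_s = \{y = h_y(x)\}$ of \eqref{eq:y1Definition}, which (as verified in Section~\ref{sec:reduction} and Appendix~\ref{app:fenichel}) is normally hyperbolic and attracting, since $D_y g_y(x,0,0)$ has the strictly negative eigenvalues $-1/\ell$ and $-(3+x_1x_2)/2$ for every $x\in\mathcal{X}$. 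The Fenichel reduced vector field $X_R$ of Theorem~\ref{thm:Fenichel}, restricted to $\mfd_s$, coincides with the planar field \eqref{eq:planarDynamics}. Since $\sigma = 4$ and $\eta = 10^{-6}$ give $48\eta^2/(1+4\eta^2)^{3/2} \ll \sigma$, Lemma~\ref{lem:stableOrbit} applies and produces a periodic orbit $\gamma_0$ of \eqref{eq:planarDynamics} whose basin contains an open annular neighborhood of the repelling origin.

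Next I would invoke Conjecture~\ref{conj:Floquet}, which asserts that $\gamma_0$ is hyperbolic as a periodic orbit of the planar reduced flow --- equivalently, that $1$ is a Floquet multiplier of $\gamma_0$ of multiplicity precisely one; together with the attraction supplied by Lemma~\ref{lem:stableOrbit} (a hyperbolic planar periodic orbit attracting on one side attracts on both), this forces the remaining non-trivial multiplier to lie strictly inside the unit disk. With this hypothesis in hand, Theorem~\ref{thm:Fenichel} applies with $M = \mfd_4$, $X^\epsilon$ the fast (layer) vector field of \eqref{eq:slowDynamics}--\eqref{eq:fastDynamics}, $\eqMfd = \mfd_s$, and $\gamma = \gamma_0$, yielding $\epsilon_1>0$ and a $C^{r-1}$ family of closed curves $\gamma_{\epsilon_v}$, $0<\epsilon_v<\epsilon_1$, with $\gamma_0$ as limit, each a periodic orbit of $\epsilon_v^{-1}X^{\epsilon_v}$ --- which is exactly the restricted dynamics \eqref{eq:slowDynamics}--\eqref{eq:fastDynamics} on $\mfd_4$ --- and hence a periodic orbit of the full system \eqref{eq:mean-differenceDynamics}. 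To obtain stability I would then track the six Floquet multipliers of $\gamma_{\epsilon_v}$: one is the trivial multiplier $1$; Fenichel's construction also furnishes a persistent attracting slow manifold $\mfd_{s,\epsilon_v}$ containing $\gamma_{\epsilon_v}$ on which the flow is $C^{r-1}$-close to the planar reduced flow, so the single non-trivial in-manifold multiplier stays inside the unit disk for small $\epsilon_v$; two further multipliers govern transverse contraction onto $\mfd_{s,\epsilon_v}$ within $\mfd_4$ and are small by normal hyperbolicity (the layer eigenvalues $-1/\ell$ and $-(3+x_1x_2)/2$ being bounded away from zero); and the last two govern transverse contraction onto $\mfd_4$, for which the $\vbar$ equation \eqref{eq:vBarDot} contributes linearization eigenvalue $-\lambda<0$ while the $\pbar$ equation \eqref{eq:fpBar} contributes $-\alpha(t)<0$ with $\alpha$ bounded below along the orbit by Lemma~\ref{lem:alphaLowerBound}. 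All non-trivial multipliers thus have modulus $<1$ for $\epsilon_v$ sufficiently small, so $\gamma_{\epsilon_v}$ is a hyperbolic, asymptotically stable limit cycle of \eqref{eq:mean-differenceDynamics}; rewriting $\epsilon_v = 1/v^\ast$ and $\epsilon_\lambda = 1/\lambda$ gives the two equivalent phrasings.

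The hard part is exactly the one we agree to assume: an analytic proof of the hyperbolicity of $\gamma_0$ (Conjecture~\ref{conj:Floquet}) is blocked by the absence of a closed form for this orbit of \eqref{eq:planarDynamics}, so the non-trivial Floquet multiplier can only be exhibited numerically --- by integrating the monodromy equation around $\gamma_0$, where it is found comfortably inside the unit disk. A secondary, purely technical difficulty is making the three-fold layered reduction (full $\to \mfd_4 \to$ singular standard form $\to \mfd_s \to$ planar) satisfy the smoothness and transversality hypotheses of Theorem~\ref{thm:Fenichel} and correctly assembling the six Floquet multipliers as above, but every ingredient for this is already in place from Section~\ref{sec:reduction} and Appendix~\ref{app:fenichel}.
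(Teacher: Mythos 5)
You have proved the wrong statement. The statement you were given is Theorem~\ref{thm:Fenichel}, which is Fenichel's persistence theorem quoted verbatim from \cite[Theorem 13.1]{NF:79}; the paper does not supply a proof for it, nor should it --- it is a classical result from normally hyperbolic invariant manifold theory, whose proof involves graph transforms and invariant section arguments that are nowhere near the machinery assembled in Section~\ref{sec:reduction} or Appendix~\ref{app:fenichel}. What you have actually written is a proof of Theorem~\ref{thm:mainResult}, the paper's own persistence result, which \emph{applies} Fenichel's theorem as a black box. Your argument is, in fact, a faithful and somewhat more detailed version of the paper's proof of Theorem~\ref{thm:mainResult}: you set $\epsilon_\lambda = \ell\epsilon_v$ to reduce to a single singular parameter, invoke Lemma~\ref{lem:stableOrbit} for the existence of $\gamma_0$, check normal hyperbolicity via the layer eigenvalues $-1/\ell$ and $-(3+x_1x_2)/2$, assume Conjecture~\ref{conj:Floquet} for the simple-unity Floquet multiplier, and then apply Theorem~\ref{thm:Fenichel} --- exactly the steps the paper takes, with the useful added care of tracking all six Floquet multipliers across the nested reductions (full system $\to\mfd_4\to\mfd_s$), which the paper handles more tersely via the stability discussion following Theorem 13.2 of~\cite{NF:79}. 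So while the mathematics you wrote is sound and parallels the paper, it does not touch the proof of the cited Fenichel theorem itself; if the task really was to prove Theorem~\ref{thm:Fenichel}, you would need to reconstruct Fenichel's argument (a reduction to the case of an equilibrium manifold, construction of a persistent normally hyperbolic slow manifold $\mfd_\epsilon$ for small $\epsilon$, and an implicit function theorem / contraction argument on the Poincar\'e map of the restricted flow, using the simple-unity Floquet hypothesis to give nondegeneracy), none of which appears here.
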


Theorem 13.2 of \cite{NF:79} states that, when $\gamma_0$ is hyperbolic, the stability of the family $\gamma_{\epsilon}$ of periodic orbits for $\epsilon > 0$ can be deduced from the stability of $\gamma_0$ and the stability of the linearization of $f_x, g_y$ at $\epsilon = 0$.

\begin{conjecture} \label{conj:Floquet}
Let $\sigma = 8$. The periodic orbit, $\gamma_0$, whose existence is guaranteed by Lemma \ref{lem:stableOrbit} for the reduced dynamics $\dot x = f_x(x, h_y(x), 0)$ defined by \eqref{eq:planarDynamics} is asymptotically stable with Floquet multipliers $\rho_1 = 1, \rho_2 < 1$.
\end{conjecture}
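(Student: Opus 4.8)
\emph{Proof strategy for Conjecture~\ref{conj:Floquet}.} The plan is to reduce the planar reduced dynamics \eqref{eq:planarDynamics} to a classical Li\'enard equation, for which uniqueness and hyperbolic stability of the limit cycle are furnished by a standard theorem. Since \eqref{eq:planarDynamics} is two-dimensional, one Floquet multiplier of $\gamma_0$ is automatically $\rho_1 = 1$ and the other is $\rho_2 = \exp\!\left(\oint_{\gamma_0}(\nabla\cdot f_x)\,\dd t\right)$, so it suffices to exhibit $\gamma_0$ as a \emph{hyperbolic} attractor. Because hyperbolicity is an open condition and the vector field $f_x(\cdot,h_y(\cdot),0)$ depends smoothly on $\eta$ near $\eta=0$ on the interior $\mathring{\mathcal{X}}$, I would first treat the case $\eta=0$ and then invoke regular perturbation theory: a hyperbolic stable limit cycle persists, retaining $\rho_2<1$, for small $\eta>0$, which covers the paper's value $\eta=10^{-6}$. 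A short trapping argument --- the $\eta=0$ cycle is the unique limit cycle and attracts a robust neighborhood, so no spurious cycles appear under small perturbation --- identifies the persisting cycle with the orbit supplied by Lemma~\ref{lem:stableOrbit}.

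For $\eta=0$, \eqref{eq:planarDynamics} collapses on $\mathring{\mathcal{X}}$ to $\dot x_1 = -2x_2$, $\dot x_2 = 2(1-x_2^2)(x_2+3x_1)/(3+x_1x_2)$ (using $\sigma=4$). Rescaling time by the bounded positive factor $\dd\tau = 2\,\dd t/(3+x_1x_2)$ --- which leaves orbits, hence the Poincar\'e return map and $\rho_2$, unchanged --- yields $x_1' = -x_2(3+x_1x_2)$, $x_2' = (1-x_2^2)(x_2+3x_1)$. The key step is the substitution $w := \operatorname{arctanh}(x_2)\in\bbR$, taken as the Li\'enard ``position'': then $w' = x_2 + 3x_1$, the map $(x_1,x_2)\mapsto(w,w')$ is a diffeomorphism of $\mathring{\mathcal{X}}$ onto an open strip of the $(w,w')$-plane, and the integrating factor $1/(1-x_2^2)$ implicit in this change of variable cancels the quadratic-in-velocity term that would otherwise appear, leaving the Li\'enard equation
\[ w'' + f(w)\,w' + g(w) = 0, \quad f(w) = 2\tanh^2 w - 1, \quad g(w) = \tanh w\,(9-\tanh^2 w). \]
(A direct differentiation confirms this identity; it holds for general $\sigma$ up to a harmless rescaling, but $\sigma=4$ makes it clean.)

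I would then verify the hypotheses of the classical Li\'enard / Levinson--Smith uniqueness-and-stability theorem. Here $f$ is smooth and even; $g$ is smooth and odd with $wg(w)>0$ for $w\neq 0$ since $9-\tanh^2 w > 8$; $G(w):=\int_0^w g \to\infty$ as $|w|\to\infty$ since $g(w)\to 8$; and $F(w):=\int_0^w f = w - 2\tanh w$ is odd with $F'(w)=f(w)$ negative on $(-w^*,w^*)$ and positive outside, where $w^* = \operatorname{arctanh}(1/\sqrt2)$, so that $F$ has a unique positive zero $w_0$ (the root of $w_0 = 2\tanh w_0$, with $w_0 > w^*$), is negative on $(0,w_0)$, is strictly increasing on $(w_0,\infty)$, and tends to $+\infty$. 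The theorem then gives a unique limit cycle of this globally-defined Li\'enard system which --- by the strict inequalities in the hypotheses, which force the per-loop energy functional to be strictly monotone at the cycle --- is hyperbolic and attracting with return-map derivative in $(0,1)$. Lemma~\ref{lem:stableOrbit} together with Poincar\'e--Bendixson supplies at least one limit cycle of \eqref{eq:planarDynamics} in $\mathring{\mathcal{X}}$; its image under the diffeomorphism is a limit cycle of the Li\'enard system, hence coincides with the unique one (so it lies in the strip and is hyperbolically stable), and pulling back through the diffeomorphism and the time rescaling --- both of which preserve orbit structure, Poincar\'e maps, and Floquet data --- shows that $\gamma_0$ is hyperbolic and asymptotically stable with $\rho_1 = 1$, $\rho_2\in(0,1)$. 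The perturbation argument of the first paragraph extends this to $\eta = 10^{-6}$.

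The main obstacle is not the Li\'enard reduction itself --- the $\operatorname{arctanh}$ substitution is the crux idea but is routine to verify --- but the two bookkeeping points flagged above: (i) confirming that the cycle persisting at $\eta = 10^{-6}$ is exactly the orbit of Lemma~\ref{lem:stableOrbit} (handled by the trapping/robustness observation, since the $\eta=0$ cycle is the \emph{unique} limit cycle in $\mathring{\mathcal{X}}$), and (ii) extracting genuine hyperbolicity ($\rho_2 < 1$ strictly, not merely Lyapunov stability) from whichever form of the Li\'enard theorem one invokes --- which follows because the strict monotonicity of the energy change per loop at the cycle forces the return-map derivative into $(0,1)$. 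If one wished to bypass the Li\'enard normal form, the alternative of estimating $\oint_{\gamma_0}(\nabla\cdot f_x)\,\dd t < 0$ directly along an orbit that cannot be located in closed form is considerably harder, which is precisely why the normal form is worth the effort.
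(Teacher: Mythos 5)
Your proposal takes a genuinely different --- and stronger --- route than the paper, which does not actually \emph{prove} Conjecture~\ref{conj:Floquet}: the paper states it as a conjecture and offers only numerical evidence (the plotted Poincar\'e return map of Figure~\ref{fig:poincareMap}, from which a derivative $DP(p)<10^{-5}$ is inferred). You propose an analytic normal-form argument that would upgrade the conjecture to a theorem.

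The crux of your approach --- the time rescaling $\dd\tau = 2\,\dd t/(3+x_1x_2)$ together with the substitution $w=\operatorname{arctanh}(x_2)$ --- is verifiably correct. Direct differentiation confirms
\[ w'' + \bigl(2\tanh^2 w - 1\bigr)\,w' + \tanh w\,\bigl(9-\tanh^2 w\bigr) = 0, \]
and the quadratic-in-velocity term indeed cancels as you claim, so the planar system at $\eta=0$ really is a classical Li\'enard equation. The hypotheses of the uniqueness theorem check out: $F(w)=w-2\tanh w$ is odd, $F(0)=0$, $F'(0)=-1<0$, it has a single positive zero $w_0\approx 1.915$, and increases monotonically to $\infty$ beyond $w_0$; $g$ is odd with $w\,g(w)>0$ and $G\to\infty$. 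You also correctly handle the fact that the diffeomorphism maps $\mathring{\mathcal{X}}$ onto a proper strip in the $(w,w')$-plane rather than all of $\bbR^2$: uniqueness of the Li\'enard cycle forces the image of the orbit from Lemma~\ref{lem:stableOrbit} to be exactly that cycle, which therefore lies inside the strip. Since the time rescaling and the diffeomorphism both preserve Poincar\'e return maps, Floquet data transfer back intact.

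Two points remain sketched. First, extracting strict hyperbolicity ($\rho_2<1$, not merely stability) requires invoking the strict monotonicity of the per-loop energy change at the cycle --- many textbook Li\'enard statements only say ``unique stable limit cycle,'' but the standard proof does yield $\Phi'(a_0)<0$ and hence $P'(a_0)<1$, so your observation is right but should be spelled out with a specific theorem citation. Second, the regular-perturbation step to $\eta=10^{-6}$ must rule out spurious small cycles appearing between the origin and the persisting one; openness of hyperbolicity plus uniqueness of the $\eta=0$ cycle plus the Poincar\'e--Bendixson annulus from Lemma~\ref{lem:stableOrbit} can be combined to do this, but the argument as written is a gesture rather than a proof. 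Neither is a fatal gap. Where the paper only \emph{measures} $\rho_2$ numerically, your approach would \emph{prove} $\rho_2\in(0,1)$ --- precisely the analytic conclusion the paper left open.
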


The Floquet multiplier associated with perturbations along the vector field is always equal to 1, and the remaining $n-1$ multipliers are the eigenvalues of the linearized Poincar\'e map $DP$ of the periodic orbit \cite[Section 1.5]{JG-PJH:13}. We proceed by computing the numerical approximation to $P$ and $DP$ for the case $\sigma=8$ using the Poincar\'e section $\Sigma = \{ (x_1, 0) | 0.05 < x_1 < 1 \}$. The results are shown in Figure \ref{fig:poincareMap}. From studying numerical solutions of the reduced dynamics, we know that the periodic orbit crosses $\Sigma$ at a point $p \approx 0.34934$, at which point the linearized map $DP(p)$ has a value of approximately 0.48. This implies that the two Floquet multipliers are $\rho_1 = 1, \rho_2 \approx 0.48 < 1$.

\begin{figure}[htbp]
   \centering
   \includegraphics[width=3.5in]{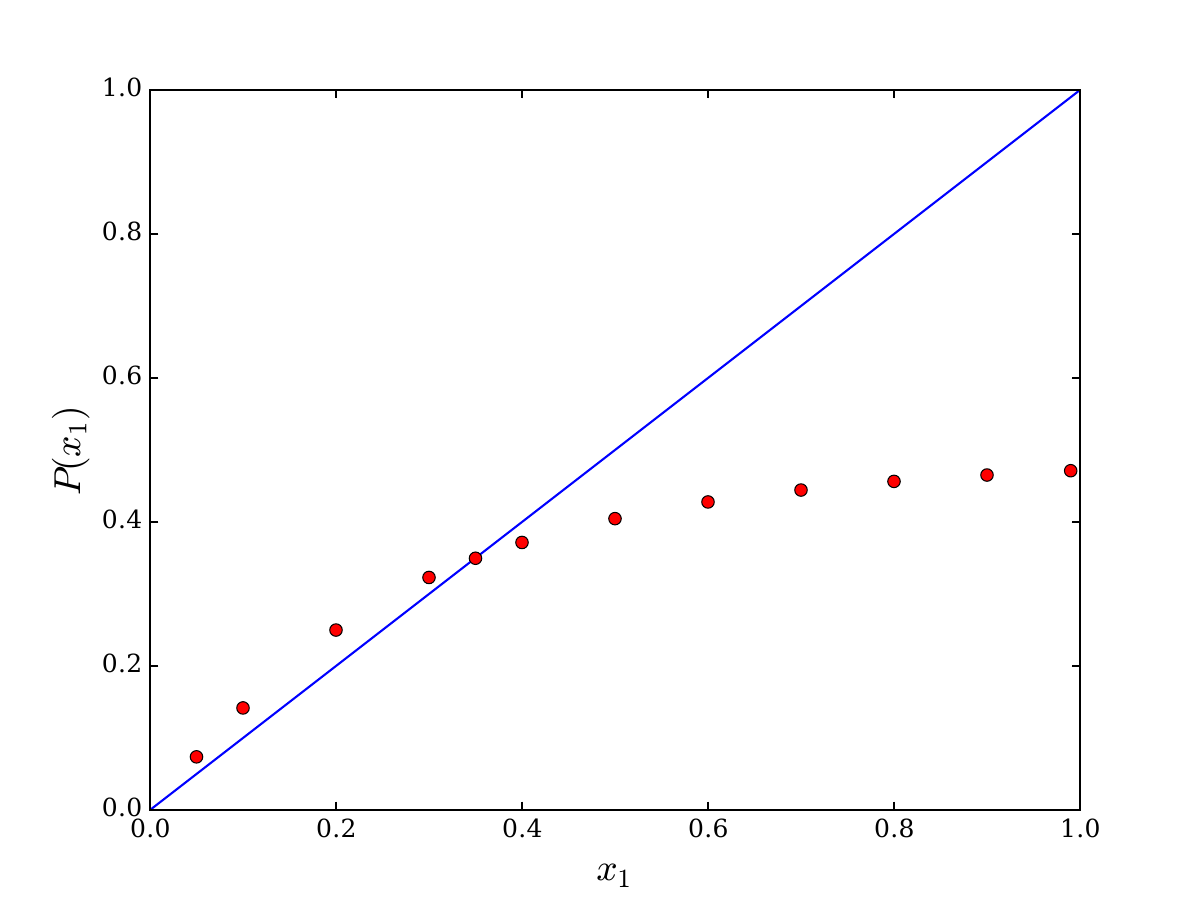} 
   \caption{Poincar\'e map of \eqref{eq:planarDynamics} computed for the section $\Sigma = \{ (x_1, 0) | 0.05 < x_1 < 0.9 \}$; the blue line represents the identity map. At the fixed point $x_{1,c} \approx 0.34934$, the slope $DP(x_{1,c}) \approx 0.48$, which is consistent with the limit cycle being stable.}
   \label{fig:poincareMap}
\end{figure}

In Conjecture \ref{conj:Floquet}, we only make claims about the Floquet multipliers in the case where $\sigma$ takes its nominal value $\sigma=8$. However, numerical investigation suggests that this result holds for generic $\sigma > 6$. Theorem \ref{thm:mainResult} then follows from applying Theorem \ref{thm:Fenichel} to the (now conjectured to be asymptotically stable) limit cycle found in Lemma \ref{lem:stableOrbit}.
\begin{proof}[Proof of Theorem {\ref{thm:mainResult}}]
Let $\gamma$ be a periodic orbit of the reduced system whose existence is shown in Lemma \ref{lem:stableOrbit}. By Conjecture \ref{conj:Floquet}, $\gamma$ has 1 as a Floquet multiplier of multiplicity precisely 1. On $\mathring{\mathcal{X}}$ the eigenvalues $\mu_1, \mu_2, \mu_3$ of the linearization $\partial g_y/\partial y$ can be computed in closed form and take the values
\[ \mu_1 = \mu_2 = -\frac{1}{\ell}, \ \mu_3 = -\frac{(3 -2 x_1 x_2 + 3 x_1^2)}{2}. \]
On the interior of the slow manifold $(x_1, x_2) \in \mathring{\mathcal{X}} = (-1,1)^2$ these are bounded away from the imaginary axis, so $\gamma \in \eqMfd_H$. Then, by Theorem \ref{thm:Fenichel}, there exists $\epsilon_1 > 0$ and a family of periodic orbits $\gamma_{\epsilon}, \epsilon \in (-\epsilon_1, \epsilon_1)$ such that $\gamma_0 = \gamma$.

Specifically, for each $\epsilon \in (0, \epsilon_1)$, there exists a stable limit cycle $\gamma_{\epsilon}$ with $\epsilon_v = \epsilon$ and $\epsilon_{\lambda} = \ell \epsilon$. Equivalently, fix $\lambda>0$ and define $v_1^* = 1/\epsilon_1 < +\infty$. Then for $v^* > v_1^*$, there exists a stable limit cycle $\gamma_{\epsilon}$ for $\epsilon= \epsilon_v = 1/v^*$.
\end{proof}

\begin{corollary} \label{cor:lambdaAxis}
Theorem \ref{thm:mainResult} establishes the existence of a stable periodic orbit $\gamma$ of the reduced dynamics for $(\epsilon_\lambda, \epsilon_v) \in \bbR_+ \times [0, \epsilon_1)$, i.e., the neighborhood of the $\epsilon_\lambda$ axis for sufficiently small $\epsilon_v > 0$ and generic $\epsilon_\lambda$.
\end{corollary}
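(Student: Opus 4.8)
The plan is to re-run the reductions of Section~\ref{sec:reduction} and the Fenichel persistence argument of Section~\ref{sec:persistence}, but now treating $\epsilon_v\to 0$ as the \emph{only} singular parameter while holding $\epsilon_{\lambda}$ fixed at an arbitrary positive value $\epsilon_\lambda^0$. The ray family $\epsilon_{\lambda}=\ell\epsilon_v$ used for Theorem~\ref{thm:mainResult} passes through the origin, so no fixed ray reaches a neighborhood of a point $(\epsilon_\lambda^0,0)$ with $\epsilon_\lambda^0>0$, and a genuinely new reduction is needed. The point of departure is that the first reduction does not see the parameters at all: Lemma~\ref{lem:asymptoticConvergence} shows that $\mfd_4=\{\pbar=\vbar=1/2\}$ is an exactly invariant, attracting submanifold of \eqref{eq:mean-differenceDynamics} for every finite $\epsilon_\lambda,\epsilon_v>0$, so it suffices to exhibit a stable limit cycle of the four-dimensional dynamics on $\mfd_4$ in the coordinates $(\delp,\delm,\mbar,\delv)$.

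First I would put the $\mfd_4$-dynamics in singular-perturbation form with $\epsilon_v$ alone: the slow variables are $(\delp,\delm,\delv)\in[-1,1]^3$ and the only fast variable is $(1-2\mbar)/\epsilon_v$, exactly as in \eqref{eq:slowDynamics}--\eqref{eq:fastDynamics} except that, with $\epsilon_{\lambda}$ fixed, $\delv$ now obeys the genuinely slow equation $\dot\delv=-(\delv-\delp)/\epsilon_{\lambda}$ rather than being slaved to $\delp$. The layer equation for the fast variable is the one already analyzed in Appendix~\ref{app:fenichel}: its linearization is $-(3+\delp\delm)/2$, bounded away from the imaginary axis on $[-1,1]^2$, so the critical manifold $\eqMfd$ on which $(1-2\mbar)/\epsilon_v=h_{y,1}(\delp,\delm)$ is normally hyperbolic and attracting. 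The reduced vector field $X_R$ on $\eqMfd$ is the skew product consisting of the planar system \eqref{eq:planarDynamics} in $(\delp,\delm)$ together with the scalar filter $\dot\delv=-(\delv-\delp)/\epsilon_{\lambda}$ driven by $\delp$.

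Next I would lift the planar limit cycle to this skew product. By Lemma~\ref{lem:stableOrbit} the $(\delp,\delm)$ subsystem has a periodic orbit $\gamma_0$, and by Conjecture~\ref{conj:Floquet} it is hyperbolic and asymptotically stable with Floquet multipliers $1$ and $\rho_2$, $|\rho_2|<1$. Feeding the resulting $T$-periodic signal $\delp(t)$ into the exponentially stable scalar filter produces a unique $T$-periodic response $\delv^\star(t)$, whose graph over $\gamma_0$ is an invariant closed curve $\tilde\gamma_0$ of the three-dimensional reduced system. Because the variational equation along $\tilde\gamma_0$ is block triangular --- the $\delv$ direction is driven by the base but does not feed back into it --- the monodromy matrix is block triangular and the Floquet multipliers of $\tilde\gamma_0$ are $\{1,\rho_2,e^{-T/\epsilon_{\lambda}}\}$; all three lie in the closed unit disk, with $1$ of multiplicity exactly one since $|\rho_2|<1$ and $e^{-T/\epsilon_{\lambda}}<1$ for every finite $\epsilon_{\lambda}$. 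Hence $\tilde\gamma_0$ is an asymptotically stable hyperbolic periodic orbit of $X_R$ lying in $\eqMfd_H$.

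Finally I would invoke Fenichel's Theorem~\ref{thm:Fenichel}, together with Theorem~13.2 of \cite{NF:79} for the stability conclusion as in the proof of Theorem~\ref{thm:mainResult}: there is $\epsilon_1=\epsilon_1(\epsilon_\lambda^0)>0$ and a $C^{r-1}$ family of stable periodic orbits $\gamma_{\epsilon_v}$, $\epsilon_v\in(0,\epsilon_1)$, of the dynamics on $\mfd_4$ with $\gamma_0=\tilde\gamma_0$ at $\epsilon_v=0$; since $\mfd_4$ is attracting and invariant these are stable limit cycles of \eqref{eq:mean-differenceDynamics}. Carrying this out for every $\epsilon_\lambda^0>0$ produces stable limit cycles on $\bigcup_{\epsilon_\lambda^0>0}\{\epsilon_\lambda^0\}\times(0,\epsilon_1(\epsilon_\lambda^0))$, a neighborhood of the positive $\epsilon_{\lambda}$-axis. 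The main obstacle is that the $\epsilon_{\lambda}$-fixed reduction no longer lands on a planar system, so the Poincar\'e--Bendixson cycle of Lemma~\ref{lem:stableOrbit} cannot be used verbatim; one must upgrade it to the three-dimensional skew product and verify the ``$1$ as a Floquet multiplier of multiplicity precisely one'' hypothesis of Theorem~\ref{thm:Fenichel}. That hypothesis holds because the extra multiplier $e^{-T/\epsilon_{\lambda}}$ stays strictly below $1$ for finite $\epsilon_{\lambda}$, but it tends to $1$ as $\epsilon_{\lambda}\to\infty$, so the persistence threshold $\epsilon_1(\epsilon_{\lambda})$ may shrink with $\epsilon_{\lambda}$ --- which is why only a neighborhood of the axis, rather than a uniform strip, is claimed.
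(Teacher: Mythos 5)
Your proposal takes a genuinely different route from the paper, but it contains a gap in a step that carries the weight of the argument. The paper's own proof is a one-line observation: in setting up the singular perturbation \eqref{eq:slowDynamics}--\eqref{eq:fastDynamics}, the paper fixed a ray $\epsilon_\lambda = \ell\epsilon_v$ with $\ell>0$ arbitrary, so Theorem~\ref{thm:mainResult} already applies for every $\ell>0$, and the union of those rays is the set claimed. Your concern that $\epsilon_1$ may depend on $\ell$ (and hence that the rays may not cover a uniform strip near a fixed $(\epsilon_\lambda^0,0)$) is a legitimate observation about the paper, but your remedy does not work as written.

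The gap is the claim that, with $\epsilon_\lambda$ held fixed and only $y_1=(1-2\mbar)/\epsilon_v$ taken as fast, the reduced vector field on the slow manifold is a skew product over the planar system \eqref{eq:planarDynamics}, with $\delv$ a passively-filtered output. That is false: the $\delm$ equation on $\mfd_4$ depends explicitly on $\delv$. From \eqref{eq:DeltamDot} with $\vbar=1/2$, writing $2\mbar = 1-\epsilon_v y_1$ and then setting $\epsilon_v=0$ on the critical manifold, one obtains
\begin{equation*}
\dot{\delm}\big|_{\epsilon_v=0} \;=\; \frac{y_1}{2}\bigl(\delm + 3\,\delv\bigr),
\qquad
y_1 \;=\; \frac{\sigma(1-\delm^2)}{3+\delm\,\delv},
\end{equation*}
so the $(\delp,\delm)$ block feeds on $\delv$ through the $3\delv$ term (which, in the paper's joint-limit reduction, becomes $3\delp$ precisely because there $\delv$ is also fast and slaved to $\delp$; this is how the $x_2+3x_1$ in \eqref{eq:planarDynamics} arises). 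Consequently the three-dimensional reduced flow is fully coupled, the variational equation along any candidate cycle is not block triangular, and the Floquet spectrum is not $\{1,\rho_2,e^{-T/\epsilon_\lambda}\}$. You therefore cannot lift the Poincar\'e--Bendixson cycle of Lemma~\ref{lem:stableOrbit} by driving a linear filter with it; you would have to establish existence and hyperbolicity of a periodic orbit for a genuinely three-dimensional reduced system at arbitrary $\epsilon_\lambda$, which Poincar\'e--Bendixson does not give you. If you instead require $\epsilon_\lambda$ small so that $\delv$ tracks $\delp$, you are back to the joint limit and the paper's ray argument, rather than having circumvented it.
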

\begin{proof}
Note that the fast dynamics \eqref{eq:fastDynamics} defines $\epsilon_\lambda = \ell \epsilon_v$, specifying only that $\ell > 0$. Therefore, the result of Theorem \ref{thm:mainResult} applies for parameter values $(\epsilon_\lambda, \epsilon_v) \in \{ (\ell \epsilon_v, \epsilon_v) | \epsilon_v \in [0, \epsilon_1), \ell \in \bbR_+ \}$.
\end{proof}

\section{Conclusion}\label{sec:conclusion}
In summary, we have developed a dynamical systems method for managing motivations in physically-embodied agents, i.e., robots. This method provides a novel way for a system to autonomously and continuously switch between a set of vector fields, each of which defines a possible dynamics for the physical state corresponding to its performing a navigation task.

We specialize to the case where the system has two vector fields defined on a simply-connected subset of $\bbR^2$. By imposing several symmetries on our system, we are able to analyze the system in the limit where first one, and then both, of two parameters approaches zero. In the joint limit we reduce the system to a planar dynamical system by means of a singular perturbation analysis; a Poincar\'e-Bendixson argument shows that this planar system exhibits an isolated periodic orbit corresponding to the physical system state oscillating between two fixed points, one for each of the two vector fields. By appealing to geometric singular perturbation theory, we show that this periodic orbit persists for finite values of the two parameters.

A natural extension of this work is to consider cases where the system has more than two navigation tasks and where the domain $\domain$ is punctured by obstacles, i.e., not simply connected. One natural way to extend this work to the case of multiple tasks is to decompose tasks into hierarchies encoded in binary trees; then, a variant of the system studied in this paper can run in each node to manage the motivations represented by each of its child nodes. By designing an appropriate method to feed the information from the child nodes back to their parent, it will be possible to maintain the limit cycle behavior for the larger number of tasks. Extending the analysis in this paper to the case of non-simply connected domains may prove more complex, as the analysis relies on several coordinate transformations that will be difficult to extend the more general case.

The other natural extension of this work is to apply it by implementing the motivational system on a physical robot. This implementation work is already in progress and will be the subject of a future report.

\appendix

\section{First Lyapunov coefficient calculation} \label{app:LyapunovCoefficient}
Kuznetsov \cite[Section 5.4]{YAK:13} provides the following formulae for computing $\left. \ell_1 \right|_{(z_0, \epsilon_0)}$, the first Lyapunov coefficient of the dynamics $\dot z = f(z, \epsilon)$. Let $J_0 = \left. D_z f \right|_{(z_0, \epsilon_0)}$. Property 1 of Theorem \ref{thm:hopfThm} implies that $J_0$ has two pure imaginary eigenvalues $\lambda(\epsilon_0), \bar{\lambda}(\epsilon_0) = \pm i \omega_0, \omega_0 > 0$. Let $q \in \bbC^n$ be a complex eigenvector corresponding to $\lambda(\epsilon_0)$:
\[ J_0 q = i \omega_0 q, \ J_0 \bar{q} = -i \omega_0 \bar{q}. \]
Introduce the adjoint eigenvector $p \in \bbC^n$ satisfying
\[ J_0^T p = -i \omega_0 p, \ J_0^T \bar{p} = i \omega_0 \bar{p} \]
and satisfying the normalization condition $\langle p, q \rangle = 1$, where $\langle \cdot, \cdot \rangle$ is the standard inner product on $\bbC^n$.

Then, Taylor expand $f(z) = f(z,\epsilon_0)$ to third order in $z$:
\[ f(z) = \frac{1}{2} B(z,z) + \frac{1}{6} C(z,z,z) + \bigO{\| z \|^4}, \]
where $B$ and $C$ are multilinear functions given by
\[ B_i(x,y) = \sum_{j,k=1}^n \left. \frac{\partial^2 f_i(\xi, \epsilon)}{\partial \xi_j \partial \xi_k} \right|_{\xi = 0} x_j y_k := \sum_{j,k=1}^n B_{ijk} x_j y_k, \]

\begin{align*}
C_i(x,y,z) &= \sum_{j,k,l=1}^n \left. \frac{\partial^3 f_i(\xi, \epsilon)}{\partial \xi_j \partial \xi_k \partial \xi_l} \right|_{\xi = 0} x_j y_k z_l \\
&:= \sum_{j,k,l=1}^n C_{ijkl} x_j y_k z_l,
\end{align*}
which define the coefficients $B_{ijk}$ and $C_{ijkl}$. The coefficient $\left. \ell_1 \right|_{(z_0, \epsilon_0)}$ is then given by Equation (5.62) of \cite{YAK:13}:
\begin{align} \label{eq:defLyapunovCoefficient}
\left. \ell_1 \right|_{(z_0, \epsilon_0)} = \frac{1}{2\omega_0} \mathrm{Re} &\left[ \langle p, C(q, q, \bar q) \rangle - 2 \langle p, B(q, J_0^{-1} B(q, \bar q)) \rangle \right. \\
&\left. + \langle p, B(\bar q , (2 i \omega_0 I-J_0)^{-1} B(q,q) ) \rangle \right], \nonumber
\end{align}
where $I$ is the identity matrix.

\section{Analysis of the Hopf bifurcation of \eqref{eq:dynamicsDirectFeedback}}
In this section, we report computations relevant to the results in Theorem \ref{thm:hopfResult}.

\subsection{Jacobian computation} \label{app:jacobian}
The following claim, which can be verified by direct computation, establishes the value of the Jacobian $J_0$ of the dynamics \eqref{eq:dynamicsDirectFeedback} evaluated at the deadlock equilibrium $z_{rd}$ given by \eqref{eq:reducedDeadlockEq}.

\begin{claim} \label{claim:jacobian}
Let $J_0 = \left. D_{z_r} f_r(z_r, \epsilon_v) \right|_{z_r = z_{rd}}$ be the Jacobian of \eqref{eq:dynamicsDirectFeedback} evaluated at the deadlock equilibrium defined by \eqref{eq:reducedDeadlockEq}. Then
\begin{align*}
J_0 &= \begin{bmatrix}
-2\mbar_{rd} & 0 & 1 & 0 \\
0 & -2\mbar_{rd} & 0 & 0 \\
j_{31} & 0 & j_{33} & 0 \\
0 & 0 & 0 & j_{44}
\end{bmatrix}
\end{align*}
where the non-zero components are given by
\begin{align}
j_{31} &= \left. \frac{\partial \dot \delm_r}{\partial x_{1r}} \right|_{z_r = z_{rd}} = -8 \epsilon_v \mbar_{rd} - 2(1-2 \mbar_{rd})(1+\mbar_{rd})/\epsilon_v \label{eq:j31}\\
j_{33} &= \left. \frac{\partial \dot \delm_r}{\partial \delm_r} \right|_{z_r = z_{rd}} = -2 \epsilon_v + (1-2\mbar_{rd})/(2\epsilon_v) \label{eq:j33}\\
j_{44} &= \left. \frac{\partial \dot \mbar_r}{\partial \mbar_r} \right|_{z_r = z_{rd}} =  -2(\epsilon_v + \sigma \mbar_{rd}) - (1+4\mbar_{rd})/(2\epsilon_v). \label{eq:j44} 
\end{align}

The characteristic polynomial $p_4(\lambda)$ of $J_0$ is given by the determinant $| J_0 - \lambda I |,$ where $I$ is the identity matrix. This determinant can be computed directly using expansion by minors:
\beq \label{eq:charPoly}
p_4(\lambda) = | J_0 - \lambda I | = (\lambda - j_{22}) (\lambda - j_{44}) (\lambda^2 + (2 \mbar_{rd} - j_{33}) \lambda - (j_{31} + 2\mbar_{rd} j_{33}).
\eeq
\end{claim}

\subsection{Proof of Lemma \ref{lem:imaginaryEigenvalues}} \label{app:imaginaryEigenvalues}

\begin{proof}[Proof of Lemma \ref{lem:imaginaryEigenvalues}]
The characteristic polynomial of $J_0$ is computed in \eqref{eq:charPoly} in Appendix \ref{app:jacobian} and can be expressed as 
\[ p_4(\lambda) = | J_0 - \lambda I | = (\lambda + 2\mbar_{rd}) (\lambda - j_{44}) p_2(\lambda),
\]
where the final factor is given by coefficients arising directly from specific entries of the (sparse) Jacobian as
\[ p_2(\lambda) = \lambda^2 + (2 \mbar_{rd}-j_{33}) \lambda - (j_{31} + 2 \mbar_{rd} j_{33}) \]
and the components $j_{kl}$ of the Jacobian are as given in Equations \eqref{eq:j31}--\eqref{eq:j44} of Appendix \ref{app:jacobian}.

The roots of $p_4$ are given by 
\[ \{ j_{22} = -2 \mbar_{rd}, j_{44} = -(1 + 4\epsilon_v^2)/(2 \epsilon_v) - 2 \mbar_{rd}/\epsilon_v -2 \sigma \mbar_{rd} \} \cup \{ \lambda | p_2(\lambda) = 0 \}.\]
It is clear that the first two roots are negative for all $\epsilon_v > 0$, so the stability properties of the deadlock equilibrium are determined by the roots of $p_2$.

The roots $\lambda_{\pm}$ of $p_2$ are purely imaginary if $2 \mbar_{rd} - j_{33} = 0$. The condition $2 \mbar_{rd} - j_{33}=0$ implies 
\beq \label{eq:bifnPoint}
\mbar_{rd}(\epsilon_v) = \frac{1}{2} - \epsilon_v,
\eeq
which is a valid value of $\mbar$ for $\epsilon_v \in [0, 1/2]$.
Inserting this expression into the expression \eqref{eq:mBarDeadlockEqn} for $\mbar_d(\epsilon_v)$, one finds that $2\mbar_{rd} - j_{33}=0$ implies that 
\beq \label{eq:bifnPointEpsilon}
4(2-\sigma)\epsilon_v^2 - 4(2-\sigma) \epsilon_v + (6-\sigma) = 0.
\eeq
For $\sigma > 6$, this equation has two real-valued solutions, of which only one is in the relevant interval $[0, 1/2]$ for which $\mbar_{rd}(\epsilon_v)$ is defined. Therefore this solution is the relevant one defining the bifurcation value.
\end{proof}

\subsection{Proof of Lemma \ref{lem:non-degeneracy}} \label{app:non-degeneracy}

\begin{proof}[Proof of Lemma \ref{lem:non-degeneracy}]
Let $\epsilon_{v,0}$ and $p_2(\lambda) = \lambda^2 + (2 \mbar_{rd}-j_{33}) \lambda - (j_{31} + 2 \mbar_{rd} j_{33})$ be defined as in the proof of Lemma \ref{lem:imaginaryEigenvalues}, where $j_{kl}$ are defined in Appendix \ref{app:jacobian}. Let $\Delta_p = (2\mbar_{rd}-j_{33})^2 + 4 (j_{31} + 2 \mbar_{rd} j_{33})$ be the discriminant of $p_2$ with respect to $\lambda$. 

At the bifurcation value $\epsilon_{v,0}$, $2 \mbar_{rd} - j_{33}=0$, so the discriminant $\Delta_p = 4(j_{31} + 4 \mbar_{rd}^2)$, which is negative, as can be shown by substituting in the expressions for $j_{kl}$ and grouping terms. This implies that the real part of the roots $\lambda_\pm$ are given by $(2\mbar_{rd}-j_{33})/2$. Therefore, since the coefficients of $p_2$ are continuous functions of $\epsilon_v$, the sign of the derivative $\dd(\mathrm{Re}\  \lambda(\epsilon_v))/ \dd \epsilon_v |_{\epsilon_v = \epsilon_{v,0}}$ is determined by that of  $(2\mbar'_{rd}-j'_{33})(\epsilon_{v,0}) := \dd (2 \mbar_{rd}-j_{33})/\dd\epsilon_v |_{\epsilon_v = \epsilon_{v,0}}.$ Computing the derivative, we get
\begin{align*} \left. (2\mbar'_{rd}-j'_{33}) \right|_{\epsilon_v = \epsilon_{v,0}} &= \left. \frac{ 2(1-2\epsilon_v)(1+2\epsilon_v)(\sigma-2) }{ 3 + 2(\sigma - 2) \epsilon_v + 4(1-\sigma) \epsilon_v^2 }  \right|_{\epsilon_v = \epsilon_{v,0}},
\end{align*}
which is non-zero (and in fact, positive) for all $\epsilon_v \in [0, 1/2]$.

Thus, the derivative $\dd(\mathrm{Re}\  \lambda(\epsilon_v))/ \dd \epsilon_v |_{\epsilon_v = \epsilon_{v,0}} < 0$.
\end{proof}

\subsection{Criticality of the Hopf bifurcation in Theorem \ref{thm:hopfResult}} \label{app:HopfCriticality}
The following result concerns the the Hopf bifurcation whose existence is proven in Theorem \ref{thm:hopfResult}. It implies that the Hopf bifurcation is supercritical, so the periodic solutions created by the bifurcation are stable limit cycles.

The remainder of this section constitutes the proof of Lemma \ref{lem:lyapunovCoefficient}. As in Appendix \ref{app:LyapunovCoefficient}, write $\left. \ell_1 \right|_{(z_0, \epsilon_0)} = \frac{1}{2\omega_0} \mathrm{Re}[ T_1 + T_2 + T_3 ],$ where, from \eqref{eq:defLyapunovCoefficient},
\begin{align*}
T_1 &= \langle p, C(q, q, \bar{q}) \rangle\\
T_2 &= - 2 \langle p, B(q, J_0^{-1} B(q, \bar q)) \rangle \\
T_3 &= \langle p, B(\bar q , (2 i \omega_0 I-J_0)^{-1} B(q,q) ) \rangle,
\end{align*}
$I$ is the identity matrix, and $\omega_0, p, q, B,$ and $C$ are as defined in Appendix \ref{app:LyapunovCoefficient}. We show that $\realPart{T_i} < 0$ for $\sigma > 6$. This implies that $\ell_1 < 0$, since $\omega_0 > 0$ by definition.

Let $J_0 = D_z \left. f \right|_{(z_0,\epsilon_0)}$ be the Jacobian of the dynamics \eqref{eq:dynamicsDirectFeedback} evaluated at the bifurcation point. As shown in \eqref{eq:bifnPoint}, $J_0$ has two purely imaginary eigenvalues when $2 \mbar_{rd} - j_{33} = (2(1+2\epsilon_v) \mbar_{rd} - (1-\epsilon_v^2))/(2\epsilon_v) = 0$. This implies that $1-2\mbar_{rd} = 2 \epsilon_v$ at the bifurcation point.

As shown in Claim \ref{claim:jacobian}, the Jacobian $J_0$ can be computed in closed form and takes the value
\begin{align} J_0 &= \begin{bmatrix}
-2\mbar_{rd} & 0 & 1 & 0 \\
0 & -2 \mbar_{rd} & 0 & 0 \\
j_{31} & 0 & j_{33} & 0 \\
0 & 0 & 0 & j_{44}
\end{bmatrix} \nonumber \\
&=
\begin{bmatrix}
-2\mbar_{rd} & 0 & 1 & 0 \\
0 & -2\mbar_{rd} & 0 & 0 \\
-6 + 8 \epsilon_v^2 & 0 & 1-2\epsilon_v & 0 \\
0 & 0 & 0 & (4\epsilon_v - 3)/(2 \epsilon_v) - \sigma (1-2\epsilon_v) - 2 \epsilon_v
\end{bmatrix}, \label{eq:jacobian}
\end{align}
where we have used the relationship established in \eqref{eq:bifnPoint} $1-2\mbar_{rd} = 2 \epsilon_v$ that holds at the bifurcation point in the final expression.

The eigenvalue problems $J_0 q = i \omega_0 q, \ J_0^T p = -i \omega_0 p$ can be solved analytically, yielding
\begin{align*}
\omega_0 &= \sqrt{ | j_{31} + 4 \mbar_{rd}^2 |} = \sqrt{5 + 4 \epsilon_v -12 \epsilon_v^2}, \\
q &= \begin{bmatrix} q_1 & 0 & q_3 & 0 \end{bmatrix}^T = \begin{bmatrix} \frac{-i}{2 \omega_0} & 0 & \frac{\omega_0 - 2 \mbar_{rd} i}{2\omega_0} & 0 \end{bmatrix}^T,\\
p &= \begin{bmatrix} p_1 & 0 & p_3 & 0 \end{bmatrix}^T = \begin{bmatrix} -(2\mbar_{rd} + \omega_0 i) & 0 & 1 & 0 \end{bmatrix}^T.
\end{align*}
The eigenvectors satisfy the required normalization condition $\langle p, q \rangle = 1$. The sparsity of $p$ and $q$ greatly simplifies the computations of $T_1, T_2, T_3$.

\subsubsection{Computing $T_1$}
First, we compute $T_1$. Recall from \eqref{eq:defLyapunovCoefficient} that $T_1 = \langle p, C(q, q, \bar{q}) \rangle$. Direct computation shows that $C_i(q, q, \bar q) = 0$ for $i \in \{1, 2, 4 \}$. The relevant components of $C_{3jkl}$ are $C_{3111} = -96 \epsilon_v \mbar_{rd}, C_{3113} = C_{3131} = C_{3311} = -12 \epsilon_v + (1-2\mbar_{rd})/\epsilon_v$. Substituting in the values of $p$ and $q$ then yields
\[ \mathrm{Re}[T_1] = -\frac{2(1-2\epsilon_v)(1-6\epsilon_v)}{4 \omega_0^2}. \]

\subsubsection{Computing $T_2$}
Next, we compute $T_2$. Recall from \eqref{eq:defLyapunovCoefficient} that $T_2 = - 2 \langle p, B(q, J_0^{-1} B(q, \bar q)) \rangle$. Direct computation shows that $B_i(q, \bar q) = 0$ for $i \in \{1, 2, 3\}$ and that $B_4(q, \bar q)$ has relevant terms $B_{411} = (1-2\mbar_{rd})(1+\mbar_{rd})/\epsilon_v -12 \epsilon_v \mbar_{rd}, B_{413} = B_{431} = -2 \epsilon_v - (1-2\mbar_{rd})/(2\epsilon_v), B_{433} = \sigma/2$. Then $B_4(q, \bar q) = ((3\sigma-9)+22 \epsilon_v - 4\sigma \epsilon_v^2)/(4\omega_0^2)$. The matrix $J_0^{-1}$ can be computed from \eqref{eq:jacobian} in closed form, and is equal to
\[ J_0^{-1} = \begin{bmatrix}
-2\mbar_{rd}/\omega_0^2 & 0 & 1/\omega_0^2 & 0\\
0 & -1/2\mbar_{rd} & 0 & 0 \\
j_{31}/\omega_0^2 & 0 & 2\mbar/\omega_0^2 & 0 \\
0 & 0 & 0 & 1/j_{44}
\end{bmatrix}, \]
where $j_{kl}$ are defined in \eqref{eq:jacobian}, so the only non-zero component of $J_0^{-1} B(q, \bar q)$ is the fourth one, which is equal to $B_4(q,\bar q)/j_{44} = ((3\sigma-9)+22 \epsilon_v - 4\sigma \epsilon_v^2)/(4\omega_0^2 j_{44})$. Direct computation then shows that $B_i(q, J_0^{-1} B(q, \bar q)) = 0$ for $i \in \{2, 4\}$ and then that
\begin{align*} \realPart{T_2} &= \realPart{-2 \langle p, B(q, J_1^{-1} B(q, \bar q)) \rangle} \\
&= \frac{1-2\epsilon_v}{4\omega_0^2} \frac{10+\sigma+\sigma \omega_0^2 -4(4+\sigma) \epsilon_v + 4(2+\sigma)\epsilon_v^2}{-3-2(\sigma-2)\epsilon_v + 4(\sigma-1)\epsilon_v^2}.
\end{align*}

\subsubsection{Computing $T_3$}
Finally, we compute $T_3$. Recall from \eqref{eq:defLyapunovCoefficient} that $T_3 = \langle p, B(\bar q , (2 i \omega_0 I-J_0)^{-1} B(q,q) ) \rangle$. As in the case of $T_2$, direct computation shows that $B_i(q, q) = 0$ for $i \in \{1, 2, 3\}$ and that $B_4(q, q)$ has relevant terms $B_{411}, B_{413} = B_{431}$, and $B_{433}$.

Let $\Gamma = (2i \omega_0 I - J_1)^{-1}$, which has the structure
\[ \Gamma = \begin{bmatrix}
\gamma_{11} & 0 & \gamma_{13} & 0 \\
0 & \gamma_{22} & 0 & 0 \\
\gamma_{31} & 0 & \gamma_{33} & 0 \\
0 & \gamma_{42} & 0 & \gamma_{44}
\end{bmatrix}, \]
where $\gamma_{44} = 1/(-j_{44} + 2 i \omega_0)$.
The first three components $(\Gamma B(q, q))_i = 0$ for $i \in \{1, 2, 3\}$ and the only non-zero component is the fourth one given by $\gamma_{44} B_4(q,q)$. Then direct computation shows that $B_i(\bar{q}, \Gamma B(q,q)) = 0$ for $i \in \{2, 4\}$ so that $T_3 = \bar p_1 B_1(\bar q, \Gamma B(q,q)) + B_3(\bar q, \Gamma B(q,q)).$ Straightforward but tedious calculations then show that 
\[ \mathrm{Re}[T_3] = -\frac{9(1+\sigma) + 2(5-7\sigma)\epsilon_v - 4(11-\sigma) \epsilon_v^2 -8(7-\sigma) \epsilon_v^3}{8\epsilon_v \omega_0^2}.\]

\subsection{Combining terms}
Combining the results for $T_1, T_2,$ and $T_3$, we get that 
\[ \mathrm{Re}[T_1 + T_2 + T_3] = \frac{p_5(\epsilon_v)}{8\epsilon_v \omega_0^2 (-3 -2(\sigma-2)\epsilon_v + 4(\sigma-1)\epsilon_v^2)}, \]
where $p_5(\epsilon_v) = 27(1+\sigma) +2 (13-24\sigma + 9 \sigma^2) \epsilon_v -8(40-9\sigma + 8 \sigma^2) \epsilon_v^2 + 16(26-17\sigma+4\sigma^2)\epsilon_v^2 + 304(\sigma-1) \epsilon_v^4 - 32(\sigma-1)^2 \epsilon_v^5$. Let $d(\epsilon_v) = 3 + 2(\sigma-2) \epsilon_v - 4(\sigma-1) \epsilon_v^2)$. Then
\begin{align*} 
\left. \ell_1 \right|_{(x_0, \mu_0)} &= \frac{1}{2\omega_0} \mathrm{Re}(T_1 + T_2 + T_3)\\
& = \frac{-1}{16 \epsilon_v \omega_0^3} \frac{p_5(\epsilon_v)}{d(\epsilon_v)},
\end{align*}
so the sign of $\ell_1$ is determined by the sign of the signs of $p_5(\epsilon_v)$ and $d(\epsilon_v)$ since $\epsilon_v, \omega_0 > 0$. We proceed by showing that both polynomials are positive. 

First, evaluating $d(\epsilon_v)$ at $\epsilon_v = \epsilon_v(\sigma)$ defined by \eqref{eq:bifnPointEpsilon}, we find that $d(\epsilon_v(\sigma)) = (2\sigma(\sqrt{\sigma - 2}-1)/(\sigma-2))$, which is clearly positive for $\sigma > 3$. Similarly, $p_5(\epsilon_v(\sigma=6)) = 189> 0$ and it can be shown that $\dd p_5(\epsilon_v(\sigma))/ \dd \sigma > 0$ for $\sigma > 6$, so $p_5(\epsilon_v(\sigma)) > 0$ for $\sigma > 6$. Therefore, $\ell_1 |_{(x_0, \mu_0)} < 0$ for $\sigma > 6$, which establishes the desired result.

\section{A tutorial on geometric singular perturbation theory} \label{app:fenichel}
Singular perturbation theory is a tool for studying dynamical systems characterized by two time scales, slow time $t$ and fast time $\tau$. The time scales are related by $\tau = t/\epsilon$, where $\epsilon > 0$ is a small parameter. In the slow time scale, the dynamical system is governed by differential equations that are singular at $\epsilon=0$. By taking the limit $\epsilon \to 0$, i.e., assuming that the fast dynamics are much faster than the slow dynamics, one can often reduce a system to the slow dynamics.

Fenichel did fundamental work on this theory, for which \cite{NF:79} is a relatively comprehensive reference. Of particular interest to this paper is the theory he developed that allows one to relate the behavior of (the invariant manifolds of) a system in the limit $\epsilon \to 0$ to the behavior with finite $\epsilon > 0$. In order to do this globally on a compact subset of the state space, Fenichel developed a geometric, or coordinate-free, notion of singular perturbation. The remainder of the section constitutes a summary of the relevant material in \cite{NF:79}. We begin by summarizing the local results, which are expressed in a given set of coordinates, before introducing the more abstract global, coordinate-free results.

\subsection{Local results}
Let $M$ be an open subset of $\bbR^{\mu} \times \bbR^{\nu}$, and let $\eqMfd = M \cap (\bbR^{\mu} \times \{ 0 \})$ be nonempty. We consider a system of the form
\begin{align}
\dot x &= f_0(x, y, \epsilon) \label{eq:standardSystem} \\
\epsilon \dot y &= g(x, y, \epsilon) \nonumber
\end{align}
where $\dot \ $ denotes differentiation with respect to $t$, defined for $(x,y) \in M$, for small, real $\epsilon$. When $\epsilon = 0$ the system \eqref{eq:standardSystem} degenerates to the reduced system
\begin{align}
\dot x &= f_0(x, y, 0) \label{eq:reducedSystem} \\
0 &= g(x, y, 0). \nonumber
\end{align}
The second equation of \eqref{eq:reducedSystem} is an implicit function that defines $y$ as a function of $x$. The relation can be expressed explicitly, at least locally, as a function $y = h(x)$ \cite{CJ:95}. The set $\{ (x, y) | y = h(x) \}$ is called the \emph{slow manifold}. By translating the $y$ coordinates by $- h(x)$, we can set $y=0$ on the slow manifold, which we denote by \eqMfd. Therefore, we assume that 
\beq \label{eq:equilibriumCondition}
g(x, 0, 0) = 0 \ \ \text{for all } (x, 0) \in \eqMfd,
\eeq
so that \eqref{eq:reducedSystem} defines a flow in \eqMfd, and we assume that this flow has a periodic orbit $\gamma_0: x = p(t), y = 0.$ Fenichel's aim is to describe the orbit structure of \eqref{eq:standardSystem} for small nonzero $\epsilon$.

By rescaling time to $\tau = t/\epsilon$, we can transform \eqref{eq:standardSystem} to
\begin{align}
x^{\prime} &= \epsilon f_0(x, y, \epsilon) \label{eq:fastSystem} \\
y^{\prime} &= g(x, y, \epsilon), \nonumber
\end{align}
where $\ ^{\prime}$ denotes differentiation with respect to $\tau$. The set $\eqMfd$ consists entirely of equilibrium points of the system \eqref{eq:fastSystem} in the limit $\epsilon \to 0$.

The plan is to relate the orbit structure of \eqref{eq:standardSystem} near $\gamma_0$, for small nonzero $\epsilon$, to the orbit structure of the reduced system \eqref{eq:reducedSystem} near $\gamma_0$ and to the linearization of $\lim_{\epsilon \to 0}$ \eqref{eq:fastSystem} at points of $\gamma_0$. The linearization of $\lim_{\epsilon \to 0}$ \eqref{eq:fastSystem} at $(x, 0) \in \eqMfd$ is
\beq \label{eq:linearizedFastSystem}
\begin{bmatrix}
\delta x \\
\delta y
\end{bmatrix}^{\prime} = 
\begin{bmatrix}
0 & 0 \\
0 & D_2 g(x, 0, 0)
\end{bmatrix}
\begin{bmatrix}
\delta x \\
\delta y
\end{bmatrix},
\eeq
where $D_2 g(x, 0, 0)$ denotes differentiation with respect to the second argument of $g$ evaluated at $(x, 0, 0)$. The second component satisfies
\beq \label{eq:layerEquation}
\delta y^{\prime} = D_2 g(x, 0, 0) \delta y,
\eeq
a linear equation parametrized by $(x, 0) \in \eqMfd$. We refer to \eqref{eq:layerEquation} (Equation (3.8) of \cite{NF:79}) as the \emph{initial layer equation.}

The first qualitative question that Fenichel asks about \eqref{eq:standardSystem} is whether it has a periodic orbit $\gamma_{\epsilon}$ near $\gamma_0$ for $\epsilon$ near zero. Fenichel \cite{NF:79} claimed that Anosov \cite{DVA:63} obtained the most general result in the literature. In particular \cite[Section III]{NF:79}, Anosov proved that $\gamma_0$ can be continued to a family $\gamma_{\epsilon}$ if: (i) $\gamma_0$, regarded as a periodic orbit of the reduced system \eqref{eq:reducedSystem}, has 1 as a Floquet multiplier of multiplicity precisely one, and (ii) for each $(x, 0) \in \gamma_0$, the initial layer equation \eqref{eq:layerEquation} has a hyperbolic equilibrium point at $\delta y = 0$. The first condition can be interpreted as a nondegeneracy requirement on the periodic orbit $\gamma_0$ itself, while the second condition is sometimes called normal hyperbolicity of the slow manifold defined by $g(x, y, 0) = 0$. Theorem 13.1 of \cite{NF:79} makes this result precise.

\subsection{Global results}
The definitions up to here have been in a given set of coordinates. In order to properly account for limit cycles, Fenichel develops a global, coordinate-free notion of the singular perturbation problem. Let $M$ be a $C^{r+1}$ manifold, $1 \leq r \leq \infty$. Let $X^{\epsilon}: M \to TM$ be a family of vector fields on $M$, parametrized by $\epsilon \in (-\epsilon_0, \epsilon_0)$, such that $X^{\epsilon}$ is a $C^r$ function of $(m, \epsilon)$. Let $\eqMfd$ be a $C^r$ submanifold of $M$ consisting entirely of equilibrium points of $X^0$, and let $z = \phi(m)$ be a $C^{r+1}$ local coordinate in $M$. In $z$-coordinates the flow of $X^{\epsilon}$ satisfies
\beq \label{eq:slowFlowGlobalCoords}
z^{\prime} = X^{\epsilon} \phi(\phi^{-1}(z))
\eeq
subject to the condition
\[ X^{\epsilon} \phi(\phi^{-1}(z)) = 0 \ \ \text{for } z \in \phi(\eqMfd). \]

Let $\mu$ be the dimension of $\eqMfd$ and let $\nu$ be the codimension of $\eqMfd$ in $M$. Because $X^0$ vanishes identically on $\eqMfd$, $T_m\eqMfd$ is in the kernel of $T X^0(m)$ for any $m \in M$. In coordinates, then, $T x^0(m)$ will have $\mu$ eigenvalues equal to zero and $\nu$ additional eigenvalues, which we call the \emph{nontrivial eigenvalues}. The subspace $T_m \eqMfd$ is invariant under $TX^0(m)$, and so $TX^0(m)$ induces a linear map
\[ Q X^0(m): T_m M/ T_m \eqMfd \to T_m M / T_m \eqMfd \]
on the quotient space. The eigenvalues of $Q X^0(m)$ are the nontrivial eigenvalues of the linearization of $\lim_{\epsilon \to 0}$ \eqref{eq:slowFlowGlobalCoords} at $z = \phi(m)$.

Let $\eqMfd_R$ be the open set where $QX^0$ is invertible. For each $m \in \eqMfd_R$, $T_m \eqMfd$ has a unique complement $N_m$ which is invariant under $T X^0(m)$. Let $\pi^{\eqMfd}$ denote the projection on $T \eqMfd$ defined by the splitting $TM | \eqMfd_R = T \eqMfd \oplus N.$  Let $\eqMfd_H \subset \eqMfd_R$ be the open subset where $QX^0$ has no pure imaginary eigenvalues; this is the normally-hyperbolic component of the slow manifold.

In $\eqMfd_R$ the reduced vector field $X_R$ is defined by
\beq \label{eq:reducedVectorField}
X_R(m) = \pi^{\eqMfd} \partial/\partial \epsilon X^{\epsilon}(m)|_{\epsilon=0}.
\eeq

Now we can state the main theorem that asserts conditions under which periodic orbits of the reduced vector field $X_R$ defined in the limit $\epsilon \to 0$ persist for $\epsilon > 0$.
\begin{theorem}[{\cite[Theorem 13.1]{NF:79}}]
Let $M$ be a $C^{r+1}$ manifold, $2 \leq r \leq \infty$. Let $X^{\epsilon}, \epsilon \in (-\epsilon_0, \epsilon_0)$ be a $C^r$ family of vector fields, and let $\eqMfd$ be a $C^r$ submanifold of $M$ consisting entirely of equilibrium points of $X^0$. Let $\gamma \in \eqMfd_H$ be a periodic orbit of the reduced vector field $X_R$, and suppose that $\gamma_0$, as a periodic orbit of $X_R$, has 1 as a Floquet multiplier of multiplicity precisely one. Then there exists $\epsilon_1 > 0$ and there exists a $C^{r-1}$ family of closed curves $\gamma_{\epsilon}, \epsilon \in (-\epsilon_1, \epsilon_1)$, such that $\gamma_0 = \gamma$ and $\gamma_{\epsilon}$ is a periodic orbit of $\epsilon^{-1} X^{\epsilon}$. The period of $\gamma_{\epsilon}$ is a $C^{r-1}$ function of $\epsilon$.
\end{theorem}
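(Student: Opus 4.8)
The plan is to combine two classical ingredients: persistence of a compact normally hyperbolic invariant (slow) manifold near $\gamma_0$, and persistence of a nondegenerate periodic orbit under a \emph{regular} perturbation. To set this up I would first localize: since $\gamma_0 = \gamma$ is a compact subset of $\eqMfd_H$, fix a compact submanifold-with-boundary $K \subset \eqMfd_H$ with $\gamma_0$ in its interior on which the nontrivial eigenvalues of $QX^0$ stay uniformly off the imaginary axis; over $K$ the linearization of $X^0$ normal to $\eqMfd$ then carries a continuous invariant splitting into uniformly stable and uniformly unstable subbundles.

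Next I would persist the slow manifold over $K$. A convenient device is to append $\dot\epsilon = 0$ and apply Fenichel's normally hyperbolic invariant-manifold theory (the earlier, coordinate-free results of \cite{NF:79}) to the resulting autonomous system, so that smoothness in $\epsilon$ is subsumed under smoothness of a single invariant manifold: since at $\epsilon = 0$ the set $\eqMfd$ consists entirely of equilibria whose normal linearization over $K$ is the hyperbolic operator $QX^0$, this yields an $\epsilon_1 > 0$ and, for $\epsilon \in (-\epsilon_1, \epsilon_1)$, a $C^{r-1}$ manifold $\eqMfd_\epsilon$ that is locally invariant under $X^\epsilon$ (equivalently under $\epsilon^{-1}X^\epsilon$), coincides with $\eqMfd$ at $\epsilon = 0$, lies $C^{r-1}$-close to the part of $\eqMfd$ over $K$, and depends $C^{r-1}$-smoothly on $\epsilon$. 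Here ``locally invariant'' means invariant except possibly for exit through $\partial K$; this is harmless because $\gamma_0$ is interior to $K$ and the orbit produced below will stay uniformly inside $K$ for small $\epsilon$.

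Then I would reduce to the slow manifold and finish with a Poincar\'e-map argument. Restricting $\epsilon^{-1}X^\epsilon$ to $\eqMfd_\epsilon$ and transporting along the graph diffeomorphism $\eqMfd \to \eqMfd_\epsilon$ gives a family $\widetilde{Y}^\epsilon$ of vector fields on a fixed neighborhood of $\gamma_0$ in $\eqMfd$, of class $C^{r-1}$ jointly in the base point and $\epsilon$ by the smoothness part of the slow-manifold theorem, and with $\widetilde{Y}^0 = X_R$ by \eqref{eq:reducedVectorField}. Thus $\gamma_0$ is a periodic orbit of $\widetilde{Y}^0 = X_R$ whose nontrivial Floquet multipliers avoid $1$, i.e., the linearized Poincar\'e map of $\gamma_0$ does not have $1$ in its spectrum. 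Choosing a cross-section $\Sigma \ni p_0$ transverse to $\gamma_0$, the flows of $\widetilde{Y}^\epsilon$ define a return map $P_\epsilon(p)$, $C^{r-1}$ jointly in $(p,\epsilon)$ for small $\epsilon$ (transversality is open), with $P_0(p_0) = p_0$ and $DP_0(p_0)$ carrying exactly the nontrivial Floquet multipliers of $\gamma_0$. Hence $I - DP_0(p_0)$ is invertible, and the implicit function theorem applied to $(p,\epsilon) \mapsto P_\epsilon(p) - p$ produces a $C^{r-1}$ arc $\epsilon \mapsto p_\epsilon$ of fixed points through $p_0$; its $\widetilde{Y}^\epsilon$-orbit is a closed curve $\gamma_\epsilon$ with $C^{r-1}$ return time $T_\epsilon$. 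Transporting back, $\gamma_\epsilon \subset \eqMfd_\epsilon$ is a periodic orbit of $\epsilon^{-1}X^\epsilon$; since $\gamma_\epsilon \to \gamma_0$ it stays interior to $K$ for small $\epsilon$, so local invariance of $\eqMfd_\epsilon$ makes $\gamma_\epsilon$ a genuine periodic orbit of the full rescaled flow, with $\gamma_0 = \gamma$ and period $T_\epsilon$ a $C^{r-1}$ function of $\epsilon$, as claimed.

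The hard part is the slow-manifold persistence: invoking Fenichel's invariant-manifold machinery correctly in the coordinate-free setting, extracting precisely the advertised $C^{r-1}$ regularity of $\eqMfd_\epsilon$ jointly in the base point and $\epsilon$ (the source of the one-derivative loss relative to the $C^r$ family), and handling the merely-local invariance so that trajectories near $\gamma_\epsilon$ complete a return to $\Sigma$ without escaping through $\partial K$. Once that input is granted, the remaining steps — the uniform-hyperbolicity localization, and then the regular perturbation plus the implicit-function-theorem argument on the Poincar\'e map — are the textbook proof of persistence of a nondegenerate periodic orbit, and the Floquet hypothesis of the theorem enters exactly and only there.
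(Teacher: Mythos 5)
The paper does not actually prove this statement: it is Fenichel's Theorem 13.1, cited verbatim and used as a black box, and Appendix~\ref{app:fenichel} only recounts the surrounding scaffolding (the layer equation, Anosov's two conditions, the coordinate-free definition of $X_R$) without reproducing the proof. There is therefore no internal proof to compare your attempt against. That said, your sketch is a faithful reconstruction of the route Fenichel (following Anosov) takes: localize to a compact piece of $\eqMfd_H$ so normal hyperbolicity is uniform, append $\dot\epsilon=0$ to subsume $\epsilon$-smoothness under invariant-manifold smoothness, invoke the persistence theorem for normally hyperbolic slow manifolds to obtain $\eqMfd_\epsilon$ with a one-derivative loss, observe that the restricted and $\epsilon^{-1}$-rescaled flow converges to $X_R$ as $\epsilon\to 0$ by the definition in~\eqref{eq:reducedVectorField}, and finish with the implicit function theorem on a transverse Poincar\'e section, where the Floquet hypothesis enters exactly to make $I-DP_0(p_0)$ invertible. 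You are also right that the genuinely hard content is the slow-manifold persistence statement itself; once that is granted, the rest is the textbook regular-perturbation argument. Your treatment of the merely-local invariance of $\eqMfd_\epsilon$ (orbits near $\gamma_\epsilon$ stay interior to $K$ for small $\epsilon$, so boundary leakage is irrelevant) is the correct way to close that gap. In short: the sketch is sound and matches the standard proof strategy, but it necessarily relies on the same Fenichel machinery that the paper itself cites rather than re-derives.
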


For many applications, we are only interested in the case of small positive $\epsilon$. In \cite[Section V]{NF:79}, Fenichel explains how he is able to obtain results for $\epsilon \in (-\epsilon_0, \epsilon_0)$. Furthermore, the stability results of \cite[Theorem 13.2]{NF:79} are stated for $\epsilon > 0$. Let us now discuss how the theorem is applied. The main conditions are 1) that the periodic orbit $\gamma_0$ be contained in $\eqMfd_H$, the normally-hyperbolic component of the slow manifold, and 2) that $\gamma_0$ have 1 as a Floquet multiplier of multiplicity precisely one.

If one has a global coordinate system for $\eqMfd$, testing for normal hyperbolicity reduces to verifying that the eigenvalues of $\partial g/\partial y |_{\eqMfd}$ have non-zero real parts; if the real parts are negative, $\eqMfd$ is stable. The Floquet multipliers of $\gamma_0$ are the eigenvalues of $B$, the linearized Poincar\'e map of $\gamma_0$, so a multiplier of 1 corresponds to a fixed point of the Poincar\'e map, and multipliers less than (greater than) 1 correspond to stability (instability) of the orbit. There are $\mu$ Floquet multipliers $\rho_i, i \in \oneto{\mu}$, where $\mu$ is the dimension of $\eqMfd$. It can be shown \cite{JG-PJH:13} that $\det(B) = \prod_{i=1}^{\mu} \rho_i = \exp \int_0^T \tr (A(s)) \mathrm{d}s$, where $T$ is the period of the periodic orbit and $A(s)$ is the linearization of the reduced dynamics $Df(x, 0, 0)|_{\gamma_0(s)}$. The existence of the periodic orbit means that there is one Floquet multiplier equal to 1. In general, $\rho_i$ have to be found by numerically computing $B$, unless one can bound the sign of $\tr A$ on the slow manifold. Alternatively, if one can show that the limit cycle is asymptotically stable on the slow manifold, the Floquet multiplier condition follows.

\section*{Acknowledgement}
This work was supported in part by Air Force Research Laboratory grant FA865015D1845 (subcontract 669737-1).

\bibliographystyle{abbrv}

\begin{thebibliography}{10}

\bibitem{DVA:63}
D.~V. Anosov.
\newblock On limit cycles in systems of differential equations with a small
  parameter in the highest derivatives.
\newblock {\em AMS Translations}, Ser. 2(33):233--275, 1963.

\bibitem{SMB-TE:86}
S.~M. Baer and T.~Erneux.
\newblock Singular {H}opf bifurcation to relaxation oscillations.
\newblock {\em SIAM Journal on Applied Mathematics}, 46(5):721--739, 1986.

\bibitem{RB-etal:06}
R.~Bogacz, E.~Brown, J.~Moehlis, P.~Holmes, and J.~D. Cohen.
\newblock The physics of optimal decision making: a formal analysis of models
  of performance in two-alternative forced-choice tasks.
\newblock {\em Psychological review}, 113(4):700, 2006.

\bibitem{RRB-AAR-DEK:99}
R.~R. Burridge, A.~A. Rizzi, and D.~E. Koditschek.
\newblock Sequential composition of dynamically dexterous robot behaviors.
\newblock {\em The International Journal of Robotics Research}, 18(6):534--555,
  1999.

\bibitem{CCC:78}
C.~C. Conley.
\newblock {\em Isolated invariant sets and the Morse index}.
\newblock American Mathematical Society, 1978.

\bibitem{AD-DEK:15}
A.~De and D.~E. Koditschek.
\newblock Parallel composition of templates for tail-energized planar hopping.
\newblock In {\em 2015 IEEE International Conference on Robotics and Automation
  (ICRA)}, pages 4562--4569, May 2015.

\bibitem{GEF-etal:09}
G.~E. Fainekos, A.~Girard, H.~Kress-Gazit, and G.~J. Pappas.
\newblock Temporal logic motion planning for dynamic robots.
\newblock {\em Automatica}, 45(2):343--352, 2009.

\bibitem{NF:79}
N.~Fenichel.
\newblock Geometric singular perturbation theory for ordinary differential
  equations.
\newblock {\em Journal of Differential Equations}, 31(1):53--98, 1979.

\bibitem{MG-DGS:85}
M.~Golubitsky and D.~Schaeffer.
\newblock {\em Singularities and Groups in Bifurcation Theory}, volume~51 of
  {\em Applied Mathematical Sciences}.
\newblock Springer, 1985.

\bibitem{MG-IS-DGS:88}
M.~Golubitsky, I.~Stewart, and D.~G. Schaeffer.
\newblock {\em Singularities and groups in bifurcation theory: Vol. II}.
\newblock Number~69 in Applied Mathematical Sciences. Springer-Verlag, New
  York, 1988.

\bibitem{RG-etal:18}
R.~Gray, A.~Franci, V.~Srivastava, and N.~E. Leonard.
\newblock Multi-agent decision-making dynamics inspired by honeybees.
\newblock {\em IEEE Transactions on Control of Network Systems}, To appear,
  2018.

\bibitem{JG:04}
J.~Guckenheimer.
\newblock Bifurcations of relaxation oscillations.
\newblock In Y.~Ilyashenko and C.~Rousseau, editors, {\em Normal Forms,
  Bifurcations and Finiteness Problems in Differential Equations}, pages
  295--316. Kluwer, 2004.

\bibitem{JG-PJH:13}
J.~Guckenheimer and P.~J. Holmes.
\newblock {\em Nonlinear oscillations, dynamical systems, and bifurcations of
  vector fields}, volume~42 of {\em Applied Mathematical Sciences}.
\newblock Springer Science \& Business Media, 2013.

\bibitem{PH:64}
P.~Hartman.
\newblock Ordinary differential equations.
\newblock {\em Classics in Applied Mathematics}, 38, 1964.

\bibitem{MH-SS-RD:04}
M.~W. Hirsch, S.~Smale, and R.~L. Devaney.
\newblock {\em Differential equations, dynamical systems and an introduction to
  chaos}, volume~60.
\newblock Access Online via Elsevier, 2004.

\bibitem{JH-KS:98}
J.~Hofbauer and K.~Sigmund.
\newblock {\em Evolutionary games and population dynamics}.
\newblock Cambridge {U}niversity {P}ress, 1998.

\bibitem{CLH:43}
C.~L. Hull.
\newblock {\em Principles of Behavior}.
\newblock Appleton-Century-Crofts, New York, 1943.

\bibitem{CJ:95}
C.~K. R.~T. Jones.
\newblock Geometric singular perturbation theory.
\newblock In {\em Dynamical systems}, volume 1609 of {\em Lecture Notes in
  Mathematics}, pages 44--118. Springer, 1995.

\bibitem{DEK:89}
D.~E. Koditschek.
\newblock The application of total energy as a {L}yapunov function for
  mechanical control systems.
\newblock {\em Contemporary Mathematics}, February 1989.

\bibitem{HKG-GEF-GJP:09}
H.~Kress-Gazit, G.~E. Fainekos, and G.~J. Pappas.
\newblock Temporal-logic-based reactive mission and motion planning.
\newblock {\em IEEE Transactions on Robotics}, 25(6):1370--1381, 2009.

\bibitem{YAK:13}
Y.~A. Kuznetsov.
\newblock {\em Elements of applied bifurcation theory}, volume 112.
\newblock Springer Science \& Business Media, 1998.

\bibitem{JPL:68}
J.~LaSalle.
\newblock Stability theory for ordinary differential equations.
\newblock {\em Journal of Differential Equations}, 4(1):57--65, 1968.

\bibitem{RL-etal:17}
R.~Laurent, K.~Mekhnacha, E.~Mazer, and P.~Bessiere.
\newblock The semantics of subroutines and iteration in the bayesian
  programming language {ProBT}.
\newblock In {\em Workshop on Probabilistic Programming Languages Semantics}.
  ACM, 2017.

\bibitem{NEL-etal:12}
N.~E. Leonard, T.~Shen, B.~Nabet, L.~Scardovi, I.~D. Couzin, and S.~A. Levin.
\newblock Decision versus compromise for animal groups in motion.
\newblock {\em Proceedings of the National Academy of Sciences},
  109(1):227--232, 2012.

\bibitem{NFL-GP:15}
N.~F. Lepora and G.~Pezzulo.
\newblock Embodied choice: how action influences perceptual decision making.
\newblock {\em PLoS computational biology}, 11(4):e1004110, 2015.

\bibitem{JL-etal:13}
J.~Liu, N.~Ozay, U.~Topcu, and R.~M. Murray.
\newblock Synthesis of reactive switching protocols from temporal logic
  specifications.
\newblock {\em IEEE Transactions on Automatic Control}, 58(7):1771--1785, 2013.

\bibitem{WGM-MAN:04}
W.~G. Mitchener and M.~A. Nowak.
\newblock Chaos and language.
\newblock {\em Proceedings of the Royal Society of London-B},
  271(1540):701--704, 2004.

\bibitem{DP-CHC-NEL:12}
D.~Pais, C.~H. Caicedo-N{\'u}nez, and N.~E. Leonard.
\newblock Hopf bifurcations and limit cycles in evolutionary network dynamics.
\newblock {\em SIAM Journal on Applied Dynamical Systems}, 11(4):1754--1784,
  2012.

\bibitem{DP-etal:13}
D.~Pais, P.~M. Hogan, T.~Schlegel, N.~R. Franks, N.~E. Leonard, and J.~A.
  Marshall.
\newblock A mechanism for value-sensitive decision-making.
\newblock {\em PloS one}, 8(9):e73216, 2013.

\bibitem{HLP-JMG:12}
H.~L. Petri and J.~M. Govern.
\newblock {\em Motivation: Theory, research, and application}.
\newblock Cengage Learning, 2012.

\bibitem{PR-etal:18}
P.~B. Reverdy, V.~Vasilopoulos, and D.~E. Koditschek.
\newblock Motivation dynamics for autonomous composition of navigation tasks.
\newblock {\em In preparation}, 2018.

\bibitem{ER-DEK:92}
E.~Rimon and D.~E. Koditschek.
\newblock Exact robot navigation using artificial potential fields.
\newblock {\em IEEE Transactions on Robotics and Automation}, 8(5):501--518,
  1992.

\bibitem{CR:99}
C.~R. Robinson.
\newblock {\em Dynamical Systems: Stability, Symbolic Dynamics, and Chaos}.
\newblock CRC Press, second edition, 1999.

\bibitem{HLR-PMF:10}
H.~Royden and P.~Fitzpatrick.
\newblock {\em Real Analysis}.
\newblock Boston, MA: Prentice Hall, 4th edition, 2010.

\bibitem{TDS-etal:12}
T.~D. Seeley, P.~K. Visscher, T.~Schlegel, P.~M. Hogan, N.~R. Franks, and J.~A.
  Marshall.
\newblock Stop signals provide cross inhibition in collective decision-making
  by honeybee swarms.
\newblock {\em Science}, 335(6064):108--111, 2012.

\bibitem{SS:94}
S.~Strogatz.
\newblock {\em Nonlinear dynamics and chaos: with applications to physics,
  biology, chemistry and engineering}.
\newblock Perseus, 1994.

\bibitem{RSW:18}
R.~S. Woodworth.
\newblock {\em Dynamic Psychology}.
\newblock Columbia University Press, 1918.

\end{thebibliography}

\end{document}